\newcolumntype{L}{>{$\displaystyle} l <{$}}
\newtheorem{introtheorem}{Theorem}
\theoremstyle{definition}
\newtheorem*{introdefinition*}{Definition}
\newtheorem*{introexample*}{Example}
\newtheorem*{introremark*}{Remark}
\theoremstyle{plain}
\newtheorem{theorem}{Theorem}[section]
\newtheorem{proposition}[theorem]{Proposition}
\newtheorem{lemma}[theorem]{Lemma}
\newtheorem{corollary}[theorem]{Corollary}
\newenvironment{hypothesis}[1]
  {\hyp}
  {\endhyp}
\theoremstyle{definition}
\newtheorem{example}[theorem]{Example}
\newtheorem{remark}[theorem]{Remark}
\newtheorem{notation}[theorem]{Notation}
\newcommand{\tm}{\times}
\newcommand{\mc}{\mathcal}
\newcommand{\G}{\Gamma}
\newcommand{\g}{\gamma}
\newcommand{\Gm}{\mathbf{G}_m}
\newcommand{\Ga}{\mathbf{G}_a}
\DeclareMathOperator{\Z}{\mathbf{Z}}
\DeclareMathOperator{\N}{\mathbf{N}}
\DeclareMathOperator{\Q}{\mathbf{Q}}
\DeclareMathOperator{\R}{\mathbf{R}}
\DeclareMathOperator{\RR}{\mathcal{R}}
\DeclareMathOperator{\C}{\mathbf{C}}
\DeclareMathOperator{\F}{\mathbf{F}}
\DeclareMathOperator{\PP}{\mathbf{P}}
\DeclareMathOperator{\End}{\mathrm{End}}
\DeclareMathOperator{\Aut}{Aut}
\DeclareMathOperator{\Char}{char}
\DeclareMathOperator{\sgn}{sgn}
\DeclareMathOperator{\Orb}{Orb}
\DeclareMathOperator{\im}{im}
\def\ZP{\zeta^*_{\mathrm{pt}}}
\begin{document}

\title{Dynamically affine maps in positive characteristic}
\author[J.~Byszewski]{Jakub Byszewski}
\address{\normalfont (JB) Wydzia\l{} Matematyki i Informatyki Uniwersytetu Jagiello\'nskiego, ul.\ S.\ \L ojasiewicza 6,
30-348 Krak\'ow, Polska}
\email{jakub.byszewski@uj.edu.pl}
\author[G.~Cornelissen]{Gunther Cornelissen}
\address{\normalfont (GC, MH, LvdM) Mathematisch Instituut, Universiteit Utrecht, Postbus 80.010, 3508 TA Utrecht, Nederland}
\email{g.cornelissen@uu.nl,  loisvandermeijden@gmail.com}
\author[M.~Houben]{Marc Houben} 
\address {\normalfont (MH) \emph{Current address:} Departement Wiskunde, KU Leuven, Celestijnenlaan 200b---box 2400, 3001 Leuven, Belgi\"e}
\email{marc.houben@kuleuven.be}
\thanks{JB gratefully acknowledges the support of National Science Center, Poland under grant no.\ 2016/\-23/\-D/ST1/\-01124. We thank Martin Bright, Stefano Marseglia and S{\l}awomir Rams for some illuminating discussions.}

\subjclass[2010]{37P55, 37C30, 14L10, 37C25, 11B37, 14G17}
\keywords{\normalfont Fixed points, Artin--Mazur zeta function, dynamically affine map, recurrence sequence, holonomic sequence, natural boundary}

\maketitle

\begin{center}
\vspace*{-8mm} {\small {with an appendix by the authors and Lois van der Meijden}}
\end{center}

\begin{abstract} 
\noindent We study fixed points of iterates of dynamically affine maps (a generalisation of Latt\`es maps) over algebraically closed fields of positive characteristic $p$. We present and study certain hypotheses that imply a dichotomy for the Artin--Mazur zeta function of the dynamical system: it is either rational or non-holonomic,  depending on specific characteristics of the map. We also study the algebraicity of the so-called tame zeta function, the generating function for periodic points of order coprime to $p$. We then verify these hypotheses for dynamically affine maps on the projective line, generalising previous work of Bridy, and, in arbitrary dimension, for maps on Kummer varieties arising from multiplication by integers on abelian varieties. 
\end{abstract}

\section{Introduction} 

We consider so-called \emph{dynamically affine maps}, a concept in algebraic dynamics introduced by Silverman \cite[\S 6.8]{Silverman} in order to unify various interesting examples, such as Chebyshev and Latt\`es maps, cousins of which occur in complex dynamics under the name of ``finite quotients of affine maps'' or ``rational maps with flat orbifold metric'' \cite{MilnorBodilpaper}. 
We will only consider the case of a ground field of positive characteristic $p > 0$.  (Most of our methods would simplify considerably in characteristic zero and lead to results of a rather different flavour.) Before we present the definition, we will illustrate by approximative pictures (constructed in \textsc{Mathematica} \cite{Mathematica}, using the function {\tt RandomInteger} for randomisation) what distinguishes the dynamics of such maps from that of other polynomial maps and random maps.

\subsection{A compilation of (restrictions of) maps} Let $f \colon S \rightarrow S$ denote a map from a finite set $S$ to itself. It can be represented by a directed graph $D_f$ (sometimes called the ``function digraph'' of $f$), with vertices labelled by elements of $S$ and an arrow from a vertex $x$ to a vertex $y$ occurring precisely if $f(x)=y$. In Figure~\ref{random}, we plotted the graphs corresponding to two random such maps where $S$ is a set with $7^3+1$ elements.

Now consider a rational function $f \colon \PP^1(\overline \F_p) \rightarrow \PP^1(\overline \F_p)$ defined over $\F_p$ (in this subsection we assume for convenience that $p \neq 2$). To represent $f$ pictorially, consider the restrictions ${f|_{{\F_{p^N}}}} \colon \PP^1({\F_{p^N}}\!) \rightarrow \PP^1({\F_{p^N}}\!)$ for various $N$.  
In Figure~\ref{x21}, we plotted the graph of the polynomial function $x \mapsto x^2+1$ for various $p$ and $N$, and in Figure~\ref{x22}, we did the same for $x \mapsto x^2-2$. At first sight, the graph for a random map looks similar to the graph for $x \mapsto x^2+1$, but the graph for $x \mapsto x^2-2$ looks much more structured. This is no coincidence; Figure~\ref{x22} represents the graph of restrictions of a dynamically affine map, whereas Figure~\ref{x21} does not.

\begin{figure}[ht]
\begin{center}
\fbox{\includegraphics[height=60.5mm]{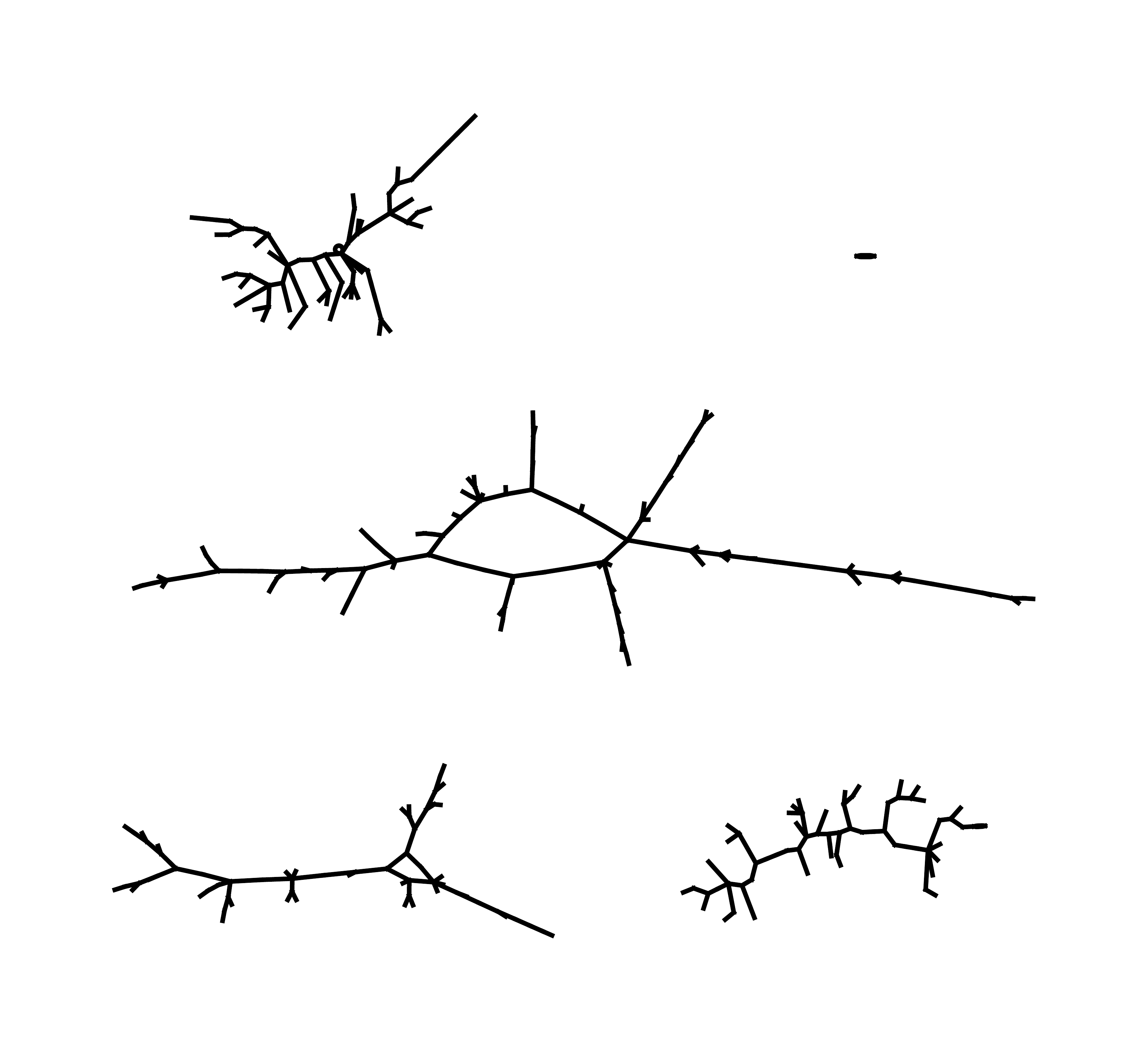}}
\fbox{\includegraphics[height=60.5mm]{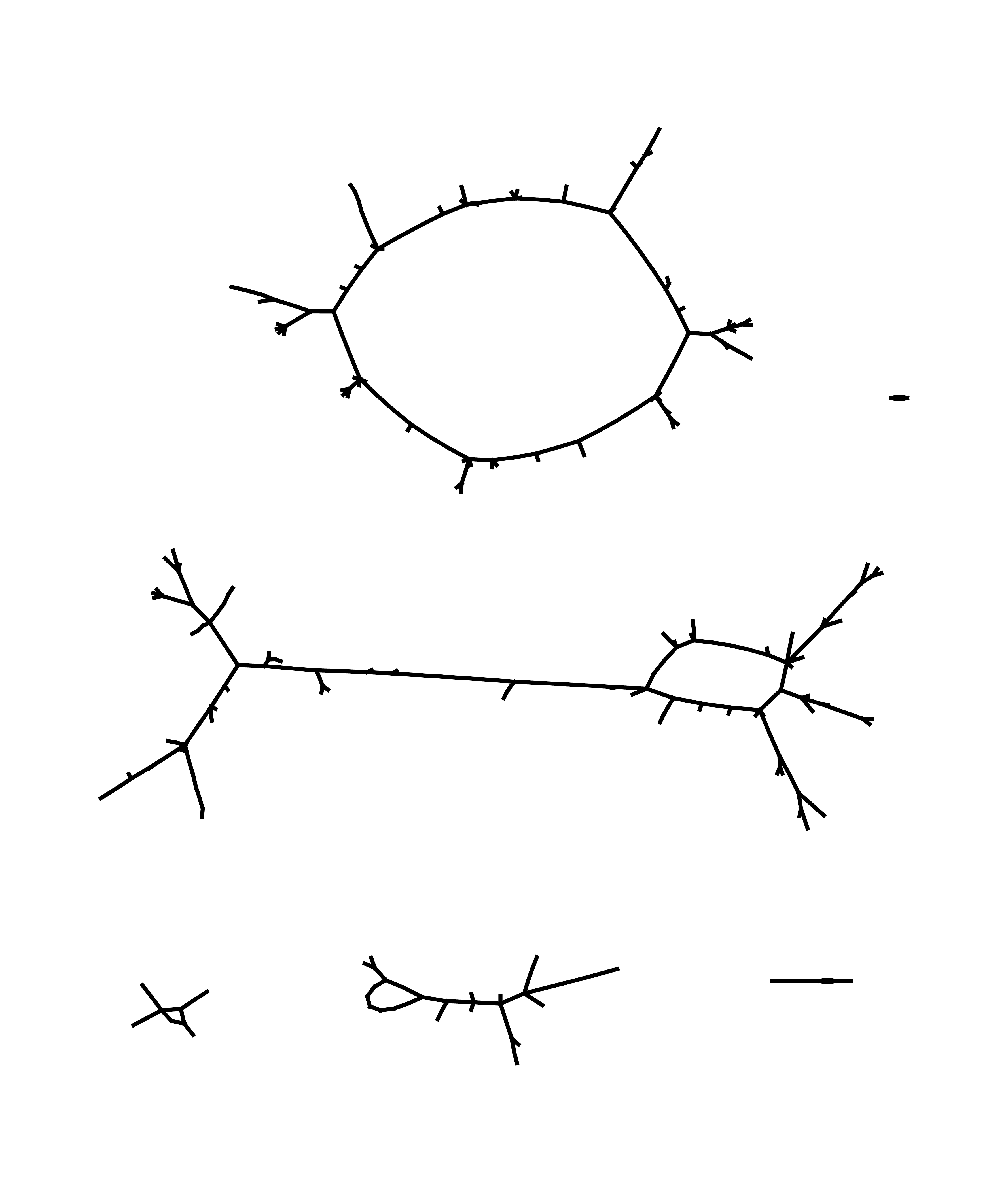}}
\caption{Graph of two random maps on a set with $7^3+1$ elements}
\label{random}
\end{center}
\end{figure}

\begin{figure}[ht]
\begin{center}
\fbox{\includegraphics[height=60mm]{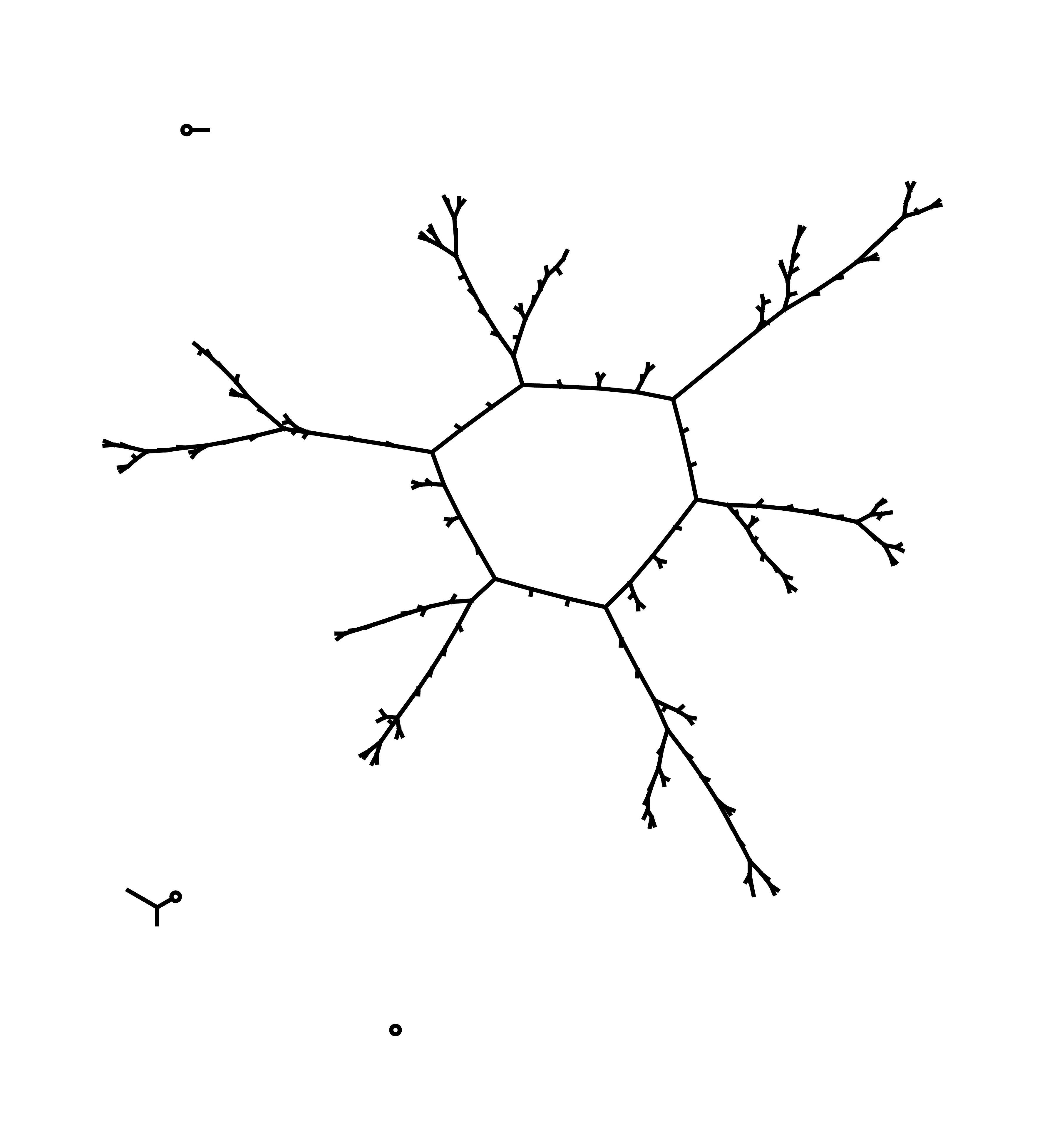}}
\fbox{\includegraphics[height=60mm]{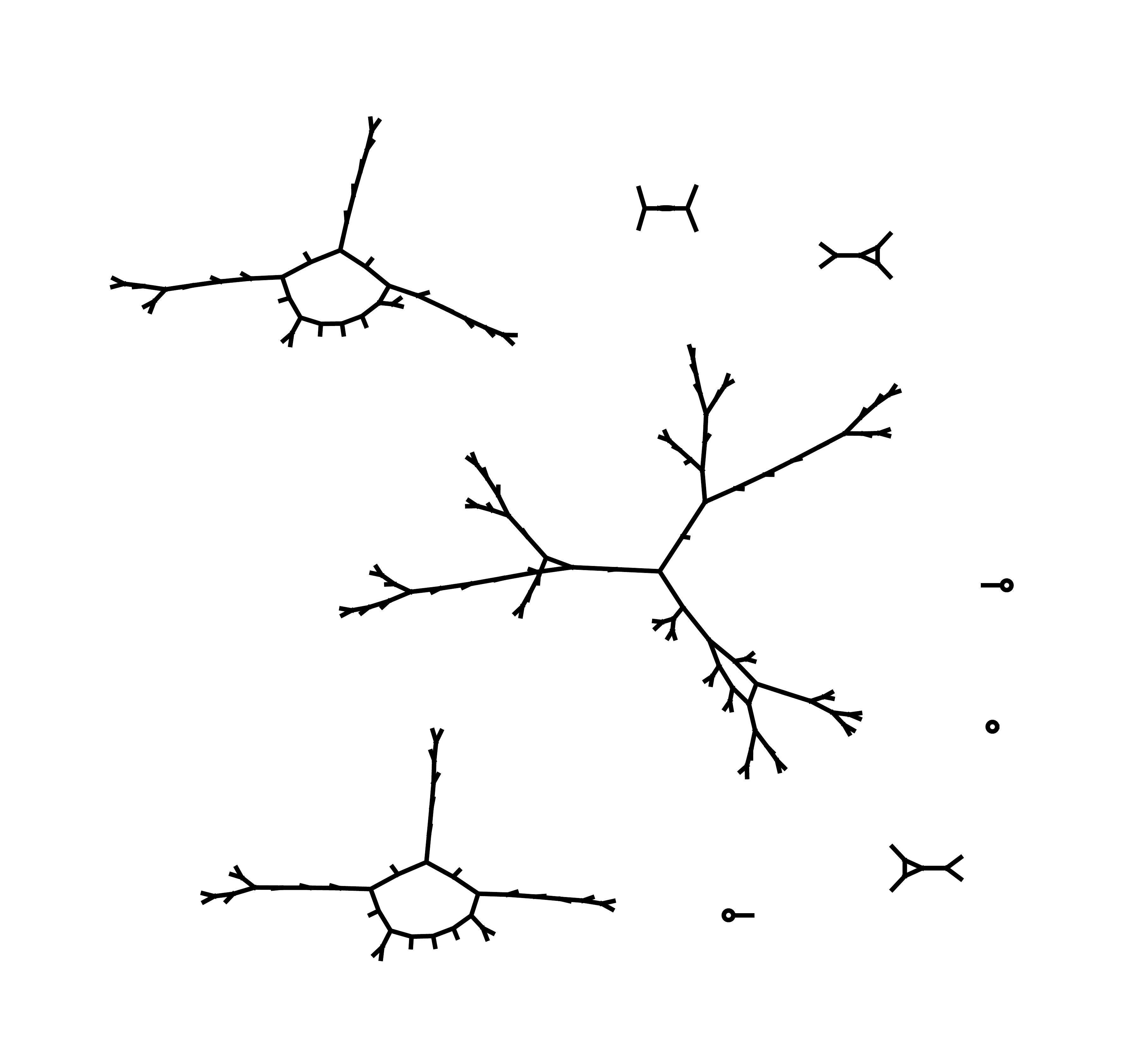}}
\caption{Graphs of $x \mapsto x^2+1$ on $\PP^1$ over a field with $7^3$ 
and $17^2$ elements (left to right)}
\label{x21}
\end{center}
\end{figure}

A common feature of all function digraphs is that their connected components are cycles (consisting of periodic points) with attached trees. What is different in Figure~\ref{x22} is the symmetry in the attached trees; this is well-understood for the polynomial $x \mapsto x^2-2$, which relates to the Lucas--Lehmer test and  failure of the Pollard rho method of factorisation, see, e.g.\ \cite{VasigaShallit, Qureshi2019}. Let us mention one further result \cite[Thm.\ 1.5 \& Example 7.2]{Kurlberg}: for the graph of a quadratic polynomial with integer coefficients, the value of $$\liminf_{p \rightarrow + \infty} \#\{ x \in \F_p \mbox{ belongs to a cycle of }D_{f\, \mathrm{mod}\, p}\}/p$$ is $0$ for $x^2+1$ but $1/4$ for $x^2-2$.

\begin{figure}[ht]
\begin{center}
\fbox{\includegraphics[height=50mm]{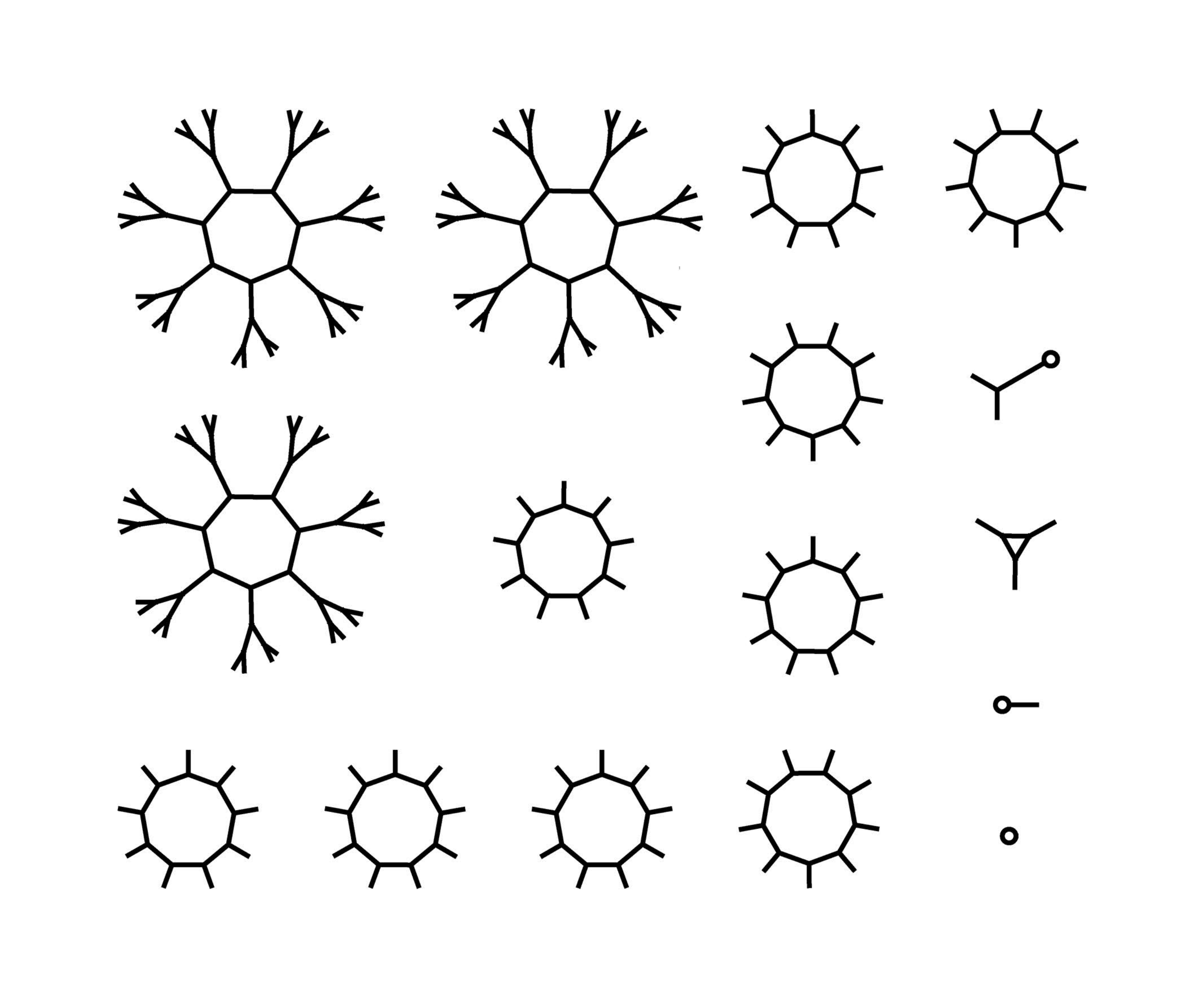}}
\fbox{\includegraphics[height=50mm]{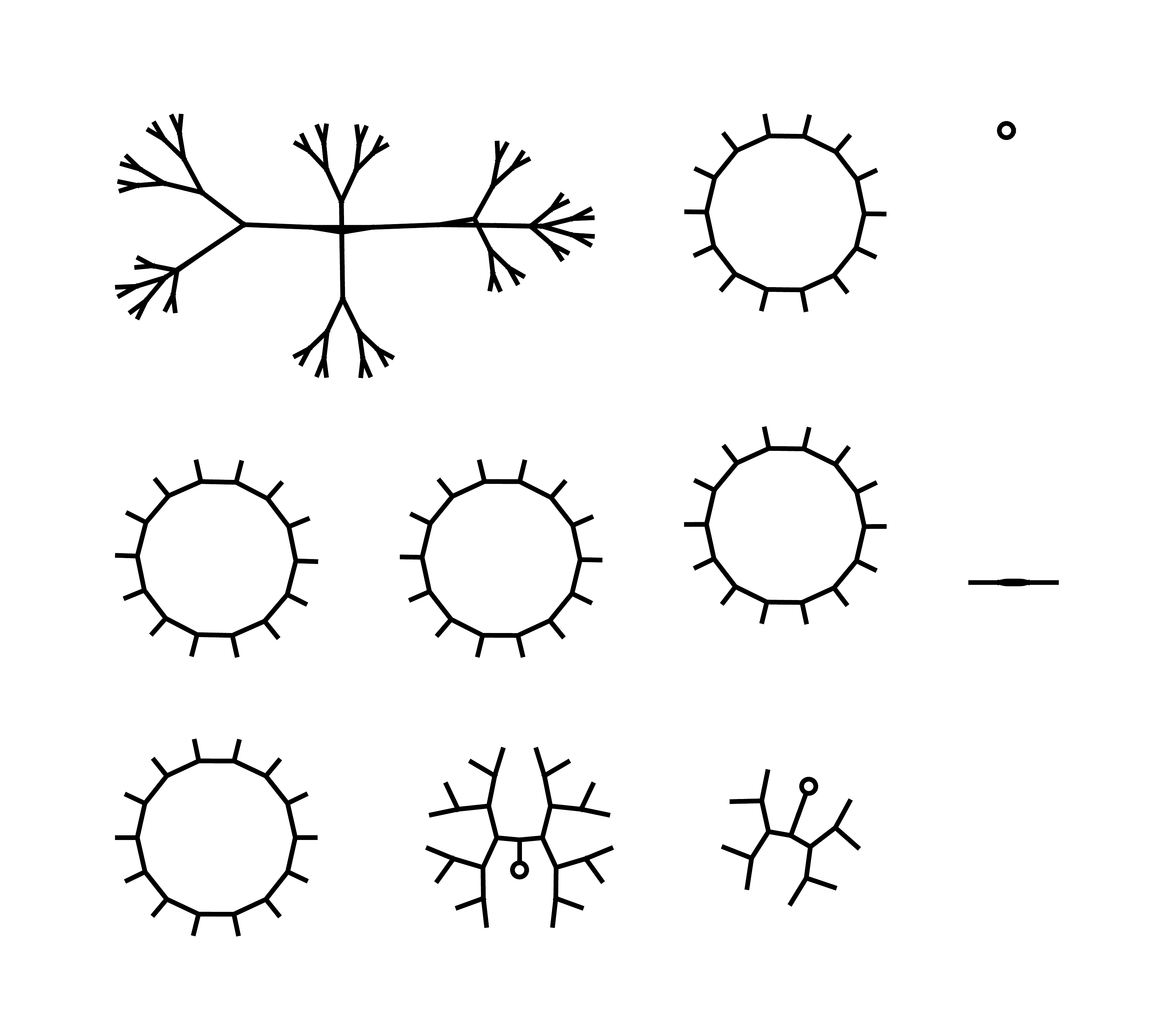}}
\caption{Graphs of $x \mapsto x^2-2$ on $\PP^1$ over a field with $7^3$  
 and $17^2$ elements (left to right)}
\label{x22}
\end{center}
\end{figure}

\begin{figure}[ht]
\begin{center}
\fbox{\includegraphics[height=52.5mm]{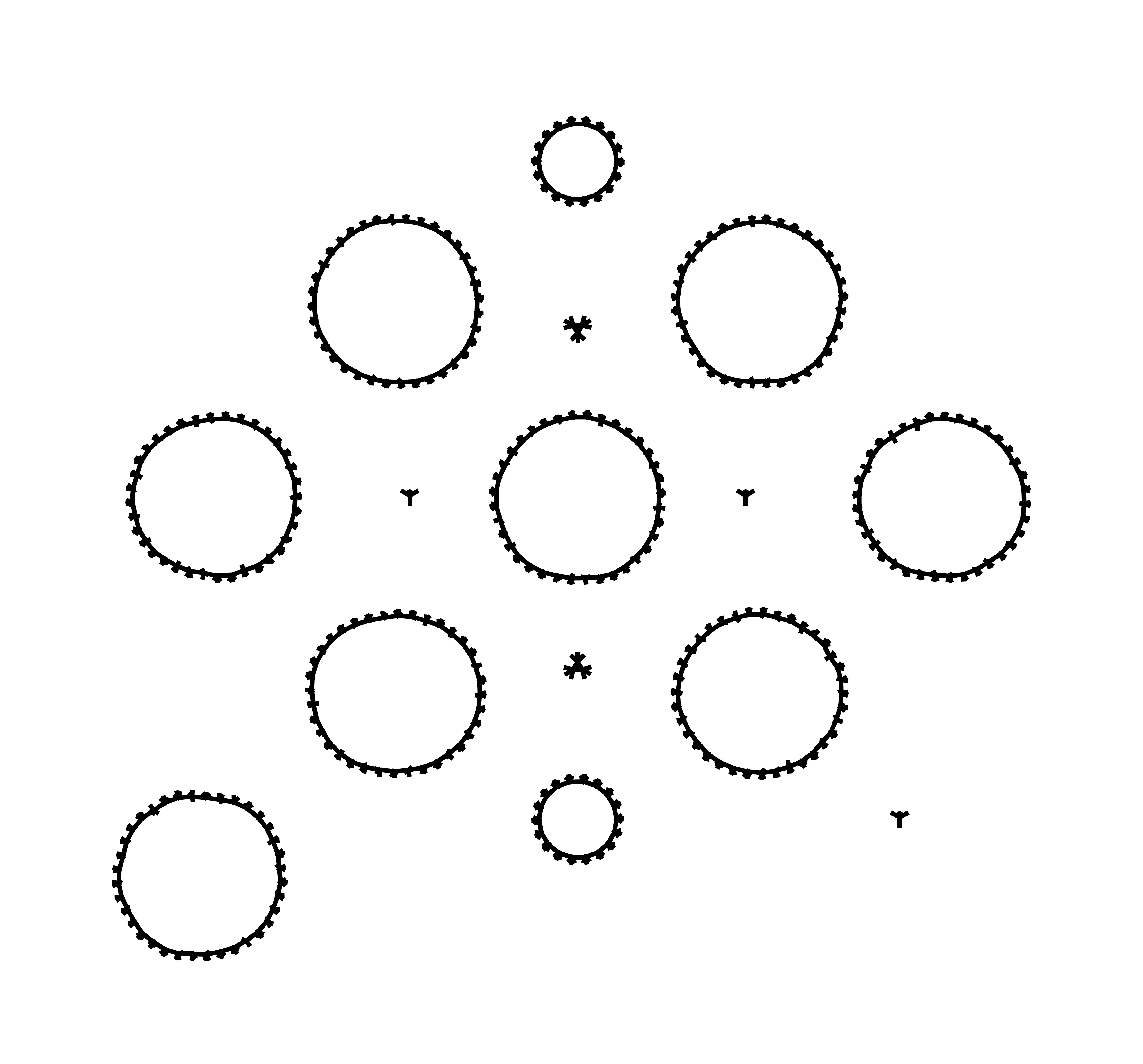}}
\fbox{\includegraphics[height=52.5mm]{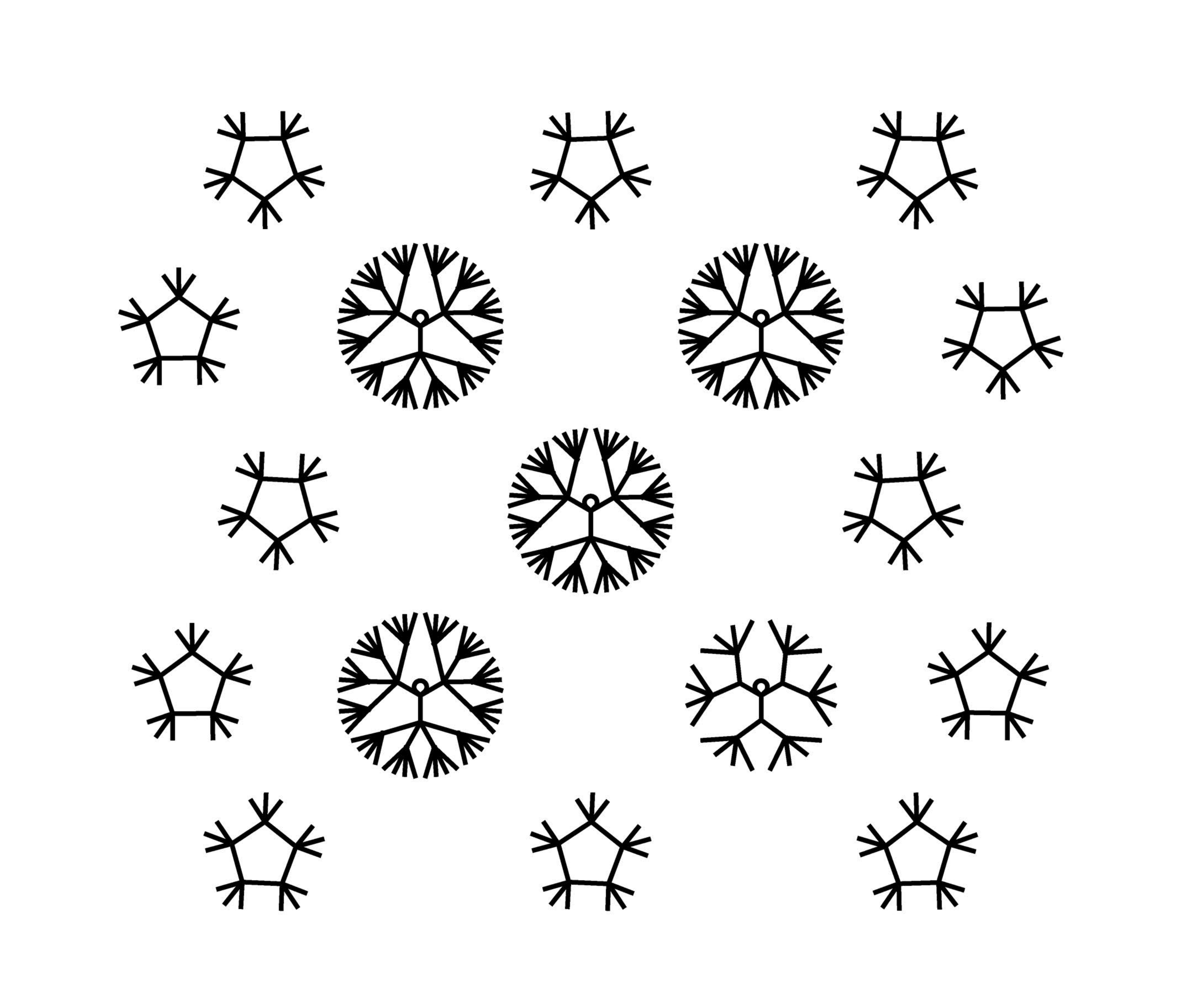}}
\caption{Graphs of the Latt\`es map arising from doubling modulo inversion on the elliptic curve $E \colon y^2=x(x-1)(x-2)$ over a field with 
$11^3$ and $23^2$ elements (left to right)} 
\label{lat}
\end{center}
\end{figure}

To explain what is special about the dynamically affine map $x \mapsto x^2-2$ as opposed to the polynomial map $x \mapsto x^2+1$, notice that $x^2-2=T_2(x)$, where $T_2$ is the normalised Chebyshev polynomial of the second kind, defined by $T_n(x+x^{-1}) =x^n+x^{-n}$. 
 This reveals a hidden group structure: the map arises from the group endomorphism $\sigma \colon \Gm \rightarrow  \Gm, x \mapsto x^2$  on the multiplicative group $\Gm$ after quotienting on both sides by the automorphism group $\Gamma=\{1,\gamma\}$  generated by the inversion $\gamma\colon \Gm \to \Gm, z \mapsto z^{-1}$ that commutes with $\sigma$. That $x^2+1$ (for $p \neq 2,3$) is not special in this sense follows from the classification of dynamically affine maps on $\PP^1$ \cite{BridyBordeaux}.  
 
We perform a similar construction using another algebraic group, the elliptic curve $E \colon y^2=x(x-1)(x-2)$, and the doubling map
$\sigma \colon E \rightarrow E, P \mapsto 2P$. After taking the quotient by $\Gamma=\langle P \mapsto -P \rangle$, we find a so-called \emph{Latt\`es map} $\PP^1 \rightarrow \PP^1$ which we have graphed over various finite fields in Figure~\ref{lat}. Again, we see a very structured picture, rather different from Figure~\ref{random} and Figure~\ref{x21}. 

We will not dwell any longer on the study of iterations of maps on finite sets, both random and ``polynomial over finite fields''---a rich subject in itself---but rather switch to our main object of study: dynamically affine maps over algebraically closed fields of positive characteristic. 

\subsection{What is a dynamically affine map?}

Let $V$ be an algebraic variety over an algebraically closed field $K$ of characteristic $p$ and $f \colon V\to V$ a morphism. 
We make the following assumption throughout: 
\begin{enumerate}
\item[\textup{\textbf{(C)}}] The map $f$ is \emph{confined}, i.e.\ the number of fixed points $f_n$ of the $n$-th iterate $f^n$ of $f$ is finite for all $n$.  
\end{enumerate}

\begin{introdefinition*}\label{def:dam} A morphism $f \colon V\to V$ of an algebraic variety $V$ over $K$ is called \emph{dynamically affine} if there exist:
\begin{enumerate}[\textup{(}i\textup{)}]
\item  a connected commutative algebraic group $(G,+)$; 
\item an \emph{affine morphism} $\psi\colon G\to G$, that is, a map of the form $$g \mapsto \psi(g)=\sigma(g)+h,$$ where $\sigma\in\End(G)$ is a confined isogeny (i.e.\ a surjective homomorphism with finite kernel) and $h\in G(K)$; 
\item a finite subgroup $\Gamma\subseteq\Aut(G)$; and 
\item a morphism $\iota\colon \Gamma\backslash G \to V$ that identifies $\Gamma\backslash G $ with a Zariski-dense open subset of $V$
\end{enumerate} such that the following diagram commutes:
\begin{equation}\label{eq:dam}
\begin{tikzcd}        
   G \arrow{r}{\psi} \arrow{d}{\pi} & G    \arrow{d}{\pi} \\
    \Gamma\backslash G \arrow[hook]{d}{\iota} \arrow{r}{}  &  \Gamma\backslash G \arrow[hook]{d}{\iota} \\
    V \arrow{r}{f}& V.
\end{tikzcd}
\end{equation}
\end{introdefinition*}

\begin{introremark*} In this paper, we adhere to the convention that a dynamically affine map consists of all the given (fixed) data in the definition, so that we can refer to the constituents $(G,\psi, \sigma, h, \Gamma, \iota)$ directly. The same map $f$ might arise from different sets of data, and in our sense be a different dynamically affine map despite being the same map on $V$. 
\end{introremark*}

\begin{introexample*}
As explained above, the map $\PP^1 \rightarrow \PP^1, x\mapsto x^2-2$ is dynamically affine for the data $(G=\Gm, \sigma \colon x \mapsto x^2, h=1, \Gamma =\langle z \mapsto z^{-1} \rangle, \iota \colon \Gamma\backslash \Gm \simeq \mathbf{A}^1 \hookrightarrow \PP^1)$ (written multiplicatively); its restrictions (to certain finite fields) were represented in Figure~\ref{x22}.

The map $\PP^1 \rightarrow \PP^1, x \mapsto (x^4-4x^2-4)/(4x(x-1)(x-2))$ is dynamically affine for the data $(G=E, \sigma \colon P \mapsto 2P, h=0_E, \Gamma =\langle P \mapsto -P \rangle, \iota \colon \Gamma\backslash E \cong \PP^1 \rightarrow \PP^1)$, where $E$ is the elliptic curve $y^2=x(x-1)(x-2)$; its restrictions were represented in Figure~\ref{lat}. 
\end{introexample*} 

\begin{introremark*}
We have slightly modified Silverman's definition \cite[\S 6.8]{Silverman} of a dynamically affine map. Instead of assuming confinedness of $\sigma$, Silverman imposes the condition $\deg(\sigma)\geqslant 2$ (as in Er\"emenko's classification theorem \cite{Eremenko}). As long as $G$ is one-dimensional and $K=\overline{\F}_p$, the definitions are equivalent. 

In a general setup one could assume merely that $\sigma$ is an isogeny and only require $f$ to be confined. This reduces, after  some case distinctions,  to the case where $\sigma$ is a confined isogeny, so we choose to put the latter property in the definition. \end{introremark*}

\subsection{Counting fixed points, orbits, and the dynamical Artin--Mazur zeta function} A natural way to begin a quantitative analysis of a discrete dynamical system such as iteration of a map $f \colon V \rightarrow V$ is to consider the sequence $(f_n)$ given by the number of fixed points of the $n$-th interate of $f$. Confinedness implies that this is a well-defined sequence of integers, and we can form 
the (full) Artin--Mazur dynamical zeta function (\cite{AM}, \cite[\S 4]{Smale}) defined as
\begin{equation} \zeta_{f}(z) := \exp \left( \sum_{n \geqslant 1} f_n \frac{z^n}{n} \right). \end{equation} 
We consider this a priori as a formal power series, but the question of convergence in a neighbourhood of $z=0$ (equivalent to $f_n$ growing at most exponentially in $n$) is interesting, and we study this in Appendix \ref{conv}. 

Counting fixed points and closed orbits is related: if $P_\ell$ denotes the number of closed orbits of length $\ell$, then $f_n = \sum_{\ell{\mid}n} \ell P_\ell,$ and there is an ``Euler product'' \begin{equation} \label{euler} \zeta_f(z) = \prod_C \frac{1}{1-z^{\ell(C)}}, \end{equation} where the product runs over the closed orbits $C$.  

It is interesting to understand the nature of the function $\zeta_f(z)$ (Smale \cite[Problem 4.5]{Smale}); Artin and Mazur \cite[Question 2 on p.~84]{AM}). For example, rationality or algebraicity of $\zeta_f(z)$ means that there is an easy recipe to compute all $f_n$ from a finite amount of data (in the rational case, it implies that $(f_n)$ is linearly recurrent). Zeta functions of more general dynamical systems can:
\begin{description}[before={\renewcommand\makelabel[1]{ ##1:}}, labelindent=0.4cm]

\item[$-$ \emph{be rational}] e.g.\ for ``Axiom A'' diffeomorphisms by Manning \cite[Cor.~2]{Manning}, for rational functions of degree $ \geqslant 2$ on the Riemann sphere by Hinkkanen  \cite[Thm.~1]{Hinkkanen}, for the Weil zeta function (when $f$ is the Frobenius map on a variety defined over a finite field) by Dwork \cite{Dwork} and Grothendieck  \cite[Cor.~5.2]{Gr}, for endomorphisms of real tori \cite[Thm.~1]{Baake}, and when $f_n$ replaced by the Lefschetz number of $f^n$ \cite{Smale};
\item[$-$ \emph{be algebraic but not rational}] e.g.\ when $f$ is an orientation preserving surface homeomorphism and $f_n$ is replaced by the Nielsen number of $f^n$ by  Pilyugina and Fel'shtyn \cite{PF}, \cite[Thm.~36]{F}; 
\item[$-$ \emph{be transcendental}] e.g.\ for restrictions of shifts by Bowen and Lanford \cite[\S 3--4]{BowenLanford} and for separable dynamically affine maps on $\PP^1(\overline \F_p)$ by Bridy \cite[Thm.\ 1]{Bridy}, \cite[Thm.\ 1.2 \& 1.3]{BridyBordeaux};
\item[$-$ \emph{have an essential singularity}] e.g.\ for some flows by Gallavotti \cite[\S 4]{Gallavotti}; 
\item[$-$ \emph{have a natural boundary}] e.g.\ for certain beta-transformations by Flatto, Lagarias and Poonen \cite[Thm.~2.4]{FLP}, for some $\Z^d$-actions ($d \geqslant 2$) by Lind \cite{Lind}, for some flows by Pollicott  \cite[\S 4]{Pollicott} and Ruelle \cite{Ruelle}, for a ``random'' such zeta function by Buzzi \cite{Buzzi}, for some explicit automorphisms of solenoids by Bell, Miles, and Ward \cite{BMW}, and for most endomorphisms of abelian varieties in characteristic $p>0$ by the first two authors \cite{D1}.
\end{description}

Following the philosophy of \cite{D1}, we will also study ``tame dynamics'' via the so-called \emph{tame zeta function} defined by 
\begin{equation} \label{deftame} \zeta^*_{f}(z) := \exp \left( \sum_{p \nmid n} f_n \frac{z^n}{n} \right), \end{equation} 
summing only over $n$ that are not divisible by $p$. Tame and ``full'' dynamics are related by the formulae in \eqref{eq:tameexpand} below, but the tame zeta function tends to be better behaved. In Appendix \ref{lois}, we give some explicit expressions for the tame zeta function of several dynamically affine maps on $\PP^1$. 

\subsection{Main results} 
Bridy studied the zeta function for dynamically affine maps on $V=\PP^1$. The main results in \cite[Thm.\ 1.2 \& 1.3]{BridyBordeaux} imply that if $f$ is dynamically affine for $V=\PP^1$ and $K=\overline \F_p$, then $\zeta_f(z)$ is transcendental over $\C(z)$ if and only if $f$ is separable; otherwise $\zeta_f(z)$ is rational. Bridy's full result applies to all $K$; the proof uses a case-by-case analysis (see Table \ref{tab:da} in Appendix \ref{lois} below) and is based on the relation between transcendence and automata theory. This starkly contrasts with the fact that in characteristic zero all dynamically affine maps have a rational zeta function (a much more general result by Hinkkanen was quoted above). 

In this paper, we prove a strengthening of Bridy's result. For this, we need some further concepts. Let $f \colon V\to V$ be a dynamically affine map.

\begin{introdefinition*}
An endomorphism $\sigma\in\End(G)$ is said to be \emph{coseparable} if $\sigma^n-1$ is a separable isogeny for all $n\in\Z_{>0}$. A dynamically affine map $f$ is called coseparable if the associated isogeny $\sigma$ is coseparable.
\end{introdefinition*}

\begin{introremark*}
In \cite{D1}, we called a coseparable endomorphism of an abelian variety ``very inseparable'' and showed that this implies inseparability  \cite[6.5(ii)]{D1}. However, it is not true that coseparable dynamically affine maps are inseparable in general. For example, if $f$ is the map   $f \colon \PP^1 \rightarrow \PP^1, x \mapsto tx$ for $t \in K$ transcendental over $\overline \F_p$, then $f$ is both coseparable and separable (a more general statement is given in \cite[Thm.~1.3]{BridyBordeaux}). \end{introremark*}

 \begin{introdefinition*} A holomorphic function on a connected open subset $\Omega \subseteq \C$ is said to have a \emph{natural boundary} along $\partial \Omega$ if it has no holomorphic continuation to any larger such $\Omega' \varsupsetneq \Omega$ \cite[\S 6]{SS}. 
We call a function $F(z)$ \emph{root-rational} if $F(z)^t \in \C(z)$ for some $t \in \Z_{>0}$. 
We call $F(z)$ \emph{holonomic} if it satisfies a nontrivial linear differential equation with coefficients in $\C(z)$. 
\end{introdefinition*} 
\noindent Since algebraic functions are holonomic \cite[Thm.\ 6.4.6]{StanleyEC2}, the following is indeed a strengthening of Bridy's result. At the same time, it shows that ``tame'' dynamics is better behaved. 

\begin{introtheorem} \label{thm:main} Assume $f\colon \PP^1 \rightarrow \PP^1$ is a dynamically affine map.  
\begin{enumerate}[\textup{(}i\textup{)}]
\item\label{thm:main1} If $f$ is coseparable, $\zeta_f(z)$ is a rational function; otherwise, $\zeta_f(z)$ is not holonomic; more precisely, it is a product of a root-rational function and a function admitting a natural boundary along its circle of convergence.
\item\label{thm:main2}  For all $f$, $\zeta_f^*(z)$ is root-rational; equivalently, it is algebraic and satisfies a first order differential equation over $\C(z)$. 
\end{enumerate}
\end{introtheorem}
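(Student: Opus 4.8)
The plan is to convert the count of fixed points into a group-theoretic count on $G$, isolate the inseparable part of the relevant isogenies, and then analyse the resulting arithmetic sequence. Since $\G\backslash G$ is a dense open subset of $\PP^1$, the group $G$ is one-dimensional, hence one of $\Ga$, $\Gm$, or an elliptic curve $E$; in each case $\End(G)$ admits a ``derivative'' ring homomorphism $d\colon\End(G)\to K$ to $\mathrm{Lie}(G)\cong K$, and an isogeny $\alpha$ of $G$ is separable exactly when $d\alpha\neq 0$. Writing $\psi^n(g)=\sigma^n(g)+h_n$ with $h_n=(1+\sigma+\dots+\sigma^{n-1})(h)$, a point of $\G\backslash G$ is fixed by the $n$-th iterate of the induced map iff $(\sigma^n-\g)(g)=-h_n$ for some $\g\in\G$; since confinedness of $\sigma$ forbids $\sigma^n=\g$, each $\sigma^n-\g$ is a nonzero isogeny, so this fibre has $\#\ker(\sigma^n-\g)=\deg_s(\sigma^n-\g)$ elements, and a short orbit-counting argument yields
\[
f_n=\frac1{|\G|}\sum_{\g\in\G}\deg_s(\sigma^n-\g)+b_n,
\]
where $b_n$ is the number of periodic points of $f$ lying in the finite set $\PP^1\setminus\iota(\G\backslash G)$; one checks this $b_n$ is exactly periodic, so it contributes a rational factor to $\zeta_f$ and a root-rational factor to $\zeta_f^*$, and may be ignored.

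Next I would split $\deg_s(\sigma^n-\g)=\deg(\sigma^n-\g)/p^{e_n(\g)}$ with $p^{e_n(\g)}=\deg_i(\sigma^n-\g)$. Because the degree form is multiplicative, $n\mapsto\deg(\sigma^n-\g)$ is an integer linear recurrence sequence with dominant eigenvalue $\Lambda:=\deg\sigma$; concretely $\deg(\sigma^n)=\Lambda^n$ on $\Ga$, $\deg(\sigma^n-\g)=|m^n\mp 1|$ on $\Gm$ for $\sigma=[m]$, and $\deg(\sigma^n-\g)=\Lambda^n+1-\bar\g\,\sigma^n-\g\,\bar\sigma^n$ on $E$. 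The inseparability is controlled by $d$: $\sigma^n-\g$ is inseparable iff $(d\sigma)^n=d\g$, so $\sigma^n-\g$ is separable for every $n$ and every $\g$ precisely when $d\sigma$ is $0$ or not a root of unity, i.e.\ precisely when $f$ is coseparable. The key arithmetic input I would establish is that, in the remaining case where $d\sigma$ is a root of unity of order $\delta$ (so $p\nmid\delta$), one has $e_n(\g)=(c_\g+v_p(n))\cdot\mathbf{1}[\,n\equiv r_\g\ (\mathrm{mod}\ \delta)\,]$ for suitable constants $c_\g\geqslant 1$ and residues $r_\g$; this is a lifting-the-exponent computation, carried out $\mathfrak p$-adically inside $\End(E)$ in the elliptic case and via Kummer's theorem for additive $\sigma$. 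Its two consequences that I would use are: restricted to $p\nmid n$, the exponent $e_n(\g)$ is a \emph{periodic} function of $n$; and for general $n$ it grows only like $v_p(n)$.

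Part \ref{thm:main1} in the coseparable case is then immediate: $e_n\equiv 0$, so $f_n=\frac1{|\G|}\sum_\g\deg(\sigma^n-\g)+b_n$, and using the displayed expansions (and, for $E$, that $\sum_{\g\in\G}\g=0$, which holds as $\G$ is then necessarily nontrivial) one finds $f_n=\sum_i c_i\lambda_i^n$ with all $c_i\in\Z$, whence $\zeta_f(z)=\prod_i(1-\lambda_iz)^{-c_i}\in\C(z)$. For part \ref{thm:main2}, restrict to $p\nmid n$: by the lemma $\deg_s(\sigma^n-\g)=\deg(\sigma^n-\g)\,p^{-e_n(\g)}$ is now a linear recurrence times a periodic function, hence a linear recurrence in $n$ with rational coefficients restricted to $\{p\nmid n\}$; so is $b_n$. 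Using the filtering identities for $\sum_{n\equiv a\,(\delta)}\lambda^nz^n/n$ and $\sum_{p\nmid m}x^m/m=-\log(1-x)+\tfrac1p\log(1-x^p)$, each pure-exponential term contributes to $\zeta_f^*$ a finite product of factors $(1-\zeta\lambda^dz^d)^{q}$ with $q\in\Q$; summing the finitely many terms shows $\zeta_f^*$ is root-rational, which is equivalent to being algebraic and satisfying a first-order differential equation over $\C(z)$ by an elementary argument ($F^t\in\C(z)$ forces $F$ algebraic and $F'/F=\tfrac1t(F^t)'/F^t\in\C(z)$; conversely a $\C(z)$-algebraic solution of $F'=RF$ is $c\prod(z-a_i)^{q_i}$ with $q_i\in\Q$).

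For the non-coseparable case of part \ref{thm:main1} I would use the decomposition $\zeta_f(z)=\prod_{k\geqslant0}R_k(z^{p^k})^{1/p^k}$, $R_k(w)=\exp\!\big(\sum_{p\nmid m}f_{p^km}\,w^m/m\big)$, valid because each $n$ is uniquely $p^km$ with $p\nmid m$. For fixed $k$, $e_{p^km}(\g)=(c_\g+k)\,\mathbf{1}[\,m\equiv r_\g p^{-k}\ (\mathrm{mod}\ \delta)\,]$ is periodic in $m$, so as in part \ref{thm:main2} each $R_k$ is root-rational. Writing $\zeta_f=R_0(z)\cdot\Theta(z)$ with $\Theta(z)=\prod_{k\geqslant1}R_k(z^{p^k})^{1/p^k}$, the factor $R_0$ is root-rational; and the $\Lambda^{p^km}$-term of $f_{p^km}$, which by the lemma survives with a positive periodic coefficient, produces in $R_k$ a logarithmic branch point at $w=\Lambda^{-p^k}$ and hence in $\Theta$ genuine branch points at the points of $\Lambda^{-1}\mu_{p^k}$; these are dense on $|z|=\Lambda^{-1}$, and positivity of all coefficients involved prevents the exponents from cancelling in the product. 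Since $\Lambda=\deg\sigma\geqslant 2$ in this case and $f_n$ equals $\Lambda^n$ times a positive, subexponentially corrected factor, $|z|=\Lambda^{-1}$ is exactly the circle of convergence of $\zeta_f$ and of $\Theta$, so $\Theta$ has a natural boundary along it; as holonomic functions have only finitely many singularities and stay holonomic after multiplication by a nonzero algebraic function, $\Theta$, and hence $\zeta_f=R_0\Theta$, is not holonomic. The main obstacle is the arithmetic lemma of the second paragraph --- determining $e_n(\g)=v_p(\deg_i(\sigma^n-\g))$ exactly, in particular its periodicity on $\{p\nmid n\}$ --- which requires a genuine case analysis over $G\in\{\Ga,\Gm,E\}$ (and, for $E$, over ordinary versus supersingular reduction and the splitting of $p$ in $\End(E)$, the supersingular and the wildly inseparable additive cases being the delicate ones), together with the non-cancellation estimate in the last paragraph, the analogue over Kummer-type quotients of the argument in \cite{D1}.
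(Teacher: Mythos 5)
Your overall architecture is the same as the paper's: the reduction $f_n=(f|_C)_n+\tfrac1{|\G|}\sum_\g\#\ker(\sigma^n-\g)$ is Lemma \ref{lem:key}, the split $\#\ker=\deg/\deg_{\mathrm i}$, the explicit degree recurrences with the character-sum trick $\sum_{\g\in\G}\g=0$, and your decomposition $\zeta_f=\prod_k R_k(z^{p^k})^{1/p^k}$ is exactly the second identity in \eqref{eq:tameexpand}. The difference is that the paper axiomatises the control of $\deg_{\mathrm i}(\sigma^n-\g)$ through a valuation on a subring of $\End(G)$ (hypothesis \ref{h2}) and a filtration $\G_m$, $s_m$, rather than through your explicit formula. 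That matters, because your ``key arithmetic lemma'' is false as stated. For $G=\Ga$ the endomorphism ring $K\langle\phi\rangle$ has characteristic $p$, and there $v(\tau^n-1)=v(\tau-1)\cdot|n|_p^{-1}$ (Proposition \ref{prop:valprop}\ref{comm}\ref{pp}): the exponent grows like $c_\g\, p^{v_p(n)}$, not $c_\g+v_p(n)$. It also fails at $p=2$ (and more generally whenever $v(\sigma^{s}-\g)\leqslant v(p)/(p-1)$), where lifting-the-exponent has its usual anomaly; the paper sidesteps this by applying the exact formula only to the tail $m\geqslant M$ with $M>v(p)/(p-1)$ and treating the finitely many lower layers separately. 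The consequences you actually use --- periodicity of the exponent on $\{p\nmid n\}$ and root-rationality of each $R_k$ --- do survive in both regimes, so part \ref{thm:main2} and the coseparable half of \ref{thm:main1} are recoverable, but as written your lemma would lead you to the wrong growth of $f_{p^k m}$ in $k$ for separable non-coseparable additive and subadditive maps.

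The second genuine gap is the non-cancellation claim at the end. Saying that ``positivity of all coefficients prevents the exponents from cancelling'' is precisely the point that needs proof: a priori several exponential terms of $\deg(\sigma^{sn}-\tilde\g^n)$ of the same modulus could conspire on the boundary circle (this is why the paper isolates the dominant-root assumption as hypothesis \ref{h4}, and why it can fail in higher dimension, e.g.\ for Salem-number tori). On $\PP^1$ it holds because $\deg(\tau)\geqslant 2$ strictly dominates the auxiliary roots $1$, $\g_m\overline\tau$, $\tau\overline{\g_m}$ in \eqref{eq:p1casestriv}, but you must check this. Even granting a unique dominant root, concluding a natural boundary from ``branch points dense on the circle'' requires showing the singularities of the individual factors $R_j(z^{p^j})^{1/p^j}$ do not cancel against one another in the infinite product; the paper avoids this bookkeeping by summing the whole tail into a single Mahler-type series ($G_h$ or $H_\beta$ of Lemma \ref{lem:NBBasic}) and proving $\lim_{\lambda\to1^-}$ of it along rays to $p$-power roots of unity is $-\infty$. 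I would recommend adopting that radial-limit argument rather than the dense-branch-point heuristic.
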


\noindent We mention an amusing corollary of Theorem \ref{thm:main}: although $\zeta_f(z)$ is in general not holonomic, the \emph{pair} $(\zeta_f(z),\zeta_{f^p}(z))$ always satisfies a simple differential equation; see  Corollary \ref{deq}  for a precise statement. 

Rather than using results from automata theory, we prove Theorem \ref{thm:main} essentially relying on a method of Mahler (see  \cite{BCR}).  We structure the proof abstractly, showing the result for dynamically affine maps (in any dimension) that satisfy certain hypotheses \textup{\textbf{(H1)}--\textbf{(H4)}} (see  Section \ref{sec:generalH}), and then verify these for $V=\PP^1$.  

We give a more general discussion of when the hypotheses hold or fail, in this way producing the first higher-dimensional examples of dynamically affine maps in positive characteristic with nontrivial $\Gamma$ where we understand the nature of the dynamical zeta function. Recall that the quotient of an abelian variety $A$ by the group $\Gamma = \{[\pm 1]\}$ is called a \emph{Kummer variety}. 

\begin{introtheorem} \label{higherdim} Let $V$ denote a Kummer variety arising from an abelian variety $A$, and let $f \colon V \rightarrow V$ denote the dynamically affine map induced by the multiplication-by-$m$ map $\sigma=[m]$ for some integer $m \geqslant 2$. Then $\zeta_f^*(z)$ is root-rational. The function $\zeta_f(z)$ is not holonomic if $m$ is coprime to $p$ and rational otherwise. 
\end{introtheorem}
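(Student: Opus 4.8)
The plan is to derive Theorem~\ref{higherdim} from the abstract criteria of Section~\ref{sec:generalH}: I would verify that the dynamically affine map $f$ determined by the data $(A,[m],0_A,\{[\pm 1]\},\pi)$, with $\pi\colon A\to V=\{[\pm 1]\}\backslash A$ the quotient map, satisfies hypotheses \textbf{(H1)}--\textbf{(H4)}; the conclusions of that section then yield both assertions (in fact the sharper form, that $\zeta_f(z)$ is a root-rational function times one with a natural boundary along its circle of convergence, of which non-holonomicity is a consequence). So the substance is an explicit fixed-point count together with some elementary $p$-adic arithmetic of the integers $m^n\pm1$.

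First I would compute $f_n$. Since $\gamma=[-1]$, a point $\bar P=\pi(P)$ is fixed by $f^n$ exactly when $[m^n]P\in\{P,-P\}$, i.e.\ $P\in X_n:=A[m^n-1]\cup A[m^n+1]$; this set is $\gamma$-stable and $\gcd(m^n-1,m^n+1)\mid 2$, so a short orbit count over $\Gamma=\{1,\gamma\}$ (in which the $\gamma$-fixed contribution $\#(X_n\cap A[2])$ and the overlap $\#(A[m^n-1]\cap A[m^n+1])$ cancel) gives
\[
f_n=\tfrac12\bigl(\#A[m^n-1]+\#A[m^n+1]\bigr).
\]
I would then substitute the standard count $\#A[N]=(N')^{2g}\,p^{i\,v_p(N)}$ for $N\in\Z_{>0}$, where $g=\dim A$, $i$ is the $p$-rank of $A$ and $N=p^{v_p(N)}N'$ with $p\nmid N'$; in particular $\#A[N]=N^{2g}$ when $p\nmid N$. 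One also records here that $f$ is confined (immediate since $m\geq2$ makes each $m^n\pm1$ a nonzero integer) and that $V$, although singular for $g\geq2$, is still a variety on which $f$ is a morphism.

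Next I would split into cases, using that $f$ is coseparable precisely when $p\mid m$. If $p\mid m$, then $m^n\pm1\equiv\pm1\pmod p$, so $v_p(m^n\pm1)=0$ and $f_n=\tfrac12\bigl((m^n-1)^{2g}+(m^n+1)^{2g}\bigr)$ is a linear recurrence sequence; \textbf{(H1)}--\textbf{(H4)} hold trivially and $\zeta_f(z)$ is rational. If $p\nmid m$, let $d$ be the multiplicative order of $m$ modulo $p$. The lifting-the-exponent lemma (in the form appropriate to $p$) shows that, for $n$ in the tame range $p\nmid n$, the valuations $v_p(m^n\pm1)$ are constant on each residue class modulo $d$ --- using crucially that $d\mid p-1$ is prime to $p$ --- so that the restriction $(f_n)_{p\nmid n}$ is a $\Q$-linear combination of sequences of the shape $\alpha^n\times(\text{periodic in }n)$; this (which also holds, more simply, when $p\mid m$) is exactly the input needed to conclude that $\zeta_f^*(z)$ is root-rational. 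For the full zeta function when $p\nmid m$, however, taking $n$ divisible by $d$ and applying lifting-the-exponent once more gives $v_p(m^n-1)=v_p(m^d-1)+v_p(n/d)$, unbounded and not eventually periodic; the resulting power-of-$p$ factor in $f_n$ is the genuinely non-holonomic perturbation that \textbf{(H1)}--\textbf{(H4)} are built to detect, and it forces $\zeta_f(z)$ to be non-holonomic (and to have a natural boundary).

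The step I expect to be the main obstacle is fitting this data to the exact form of \textbf{(H1)}--\textbf{(H4)}: one must control the $\Gamma$-quotient and the singular locus of $V$; track which of the two shifted sequences $m^n\pm1$ carries a nontrivial $p$-part for a given $n$ (for $p$ odd the two sets of such $n$ are disjoint by $\gcd(m^n-1,m^n+1)\mid2$, while $p=2$ needs the parity-dependent variant of lifting-the-exponent); and allow for all values of the $p$-rank $i$, including $i=0$, where the $p$-part still enters through the division by $p^{2g\,v_p(m^n-1)}$ implicit in $(N')^{2g}$. Once the hypotheses are confirmed, Mahler's method, as used in the proof of Theorem~\ref{thm:main}, turns the unboundedness of $v_p(m^n-1)$ into the asserted analytic non-continuability, and Theorem~\ref{higherdim} follows.
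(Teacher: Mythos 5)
Your proposal is correct and follows essentially the same route as the paper: Theorem~\ref{higherdim} is proved there precisely by verifying \ref{h1}--\ref{h4} and invoking Theorems~\ref{thm:nb} and~\ref{thm:tame}, the paper citing Corollary~\ref{cor:h1p1}, Proposition~\ref{prop:h2general}\ref{state:orda}, Proposition~\ref{prop:abvarh3} and Proposition~\ref{prop:h4av} where you instead carry out the equivalent computations explicitly (your formula $f_n=\tfrac12\bigl(\#A[m^n-1]+\#A[m^n+1]\bigr)$ is Lemma~\ref{lem:key} with $C=\emptyset$, and your lifting-the-exponent analysis reproduces Proposition~\ref{prop:valprop}\ref{comm}, which is already built into the proofs of those theorems). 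One small correction: when $p\mid m$ the map is coseparable, so $s$ does not exist and \ref{h4} is vacuous rather than ``trivially satisfied''; rationality in that case follows from Proposition~\ref{prop:cosepcase}, which requires only \ref{h1}--\ref{h3} (or, as you note, directly from your closed formula for $f_n$).
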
 

\begin{introremark*}
We use the word ``Kummer variety'' for the variety $V=\Gamma\backslash A$ that, for $\dim A>1$, is singular at points in the finite subset $\Gamma\backslash A[2]$ of $V$, but the name is sometimes used for the minimal resolution $\tilde V$ of $V$. Since the set of singular points is finite and stable by $f$, the map $f$ can be seen as a birational map $f\colon \tilde V \dashrightarrow \tilde V$ with locus of indeterminacy stable by $f$, and the above theorem can be interpreted as a statement about the periodic points of this birational map outside the preimage of the singular points.  
\end{introremark*}

\begin{introremark*}
The non-holonomicity shows that the sequence $(f_n)$ of number of fixed points of the iterates of $f$ is somewhat ``complex'', but it does not mean that $f_n$ is ``uncomputable''. As a matter of fact, the results in \cite{BCH} say that for $f$ an endomorphism of an algebraic group there exists a formula expressing $f_n$ in terms of a linear recurrent sequence and two specific periodic sequences of integers that control a $p$-adic deviation of $f_n$ from being linearly recurrent. These data can in principle be computed by breaking up the algebraic group into abelian varieties, tori, vector groups, and semisimple groups. Similarly, one can in principle trace through our proofs to compute $f_n$ for dynamically affine maps satisfying our hypotheses. 
\end{introremark*}

\noindent We finish the introduction by mentioning a few possibilities for future research. 
\begin{itemize}[leftmargin=*, labelindent=0.4cm]
\item[$-$] The relation between fixed points and closed orbits may be used to study the distribution of closed orbit lengths (analogously to the prime number theorem). Because of the analytic nature of the function $\zeta_f(z)$ revealed by our results, one cannot in general use standard Tauberian methods. We have studied this question via a different route for maps on abelian varieties \cite{D1} and for maps on general algebraic groups \cite{BCH} (which covers the case of dynamically affine maps with trivial $\Gamma$, $h$, and $\iota$, but is more general, since we do not require the group $G$ to be commutative). It would be interesting to extend this to general dynamically affine maps. 
\item[$-$] We have no good understanding of the dynamical zeta function of general rational functions on $\PP^1$ that are not dynamically affine, e.g.\ $x \mapsto x^2+1$ in characteristic $p\geqslant 5$ (see  \cite[Question 2]{Bridy}). It would be interesting to investigate the nature of the (tame) zeta function for such examples. 
\item[$-$] Inhowfar the hypotheses  \textup{\textbf{(H1)}--\textbf{(H4)}} are necessary to reach the conclusion of the main theorem merits attention, since they are extracted from a ``method of proof'' rather than intrinsic. 
\item[$-$] In general, $V$ may be singular. It is interesting to study whether $V$ admits a resolution to which $f$ extends as a morphism, and the relation between the zeta function of that extended morphism and the zeta function of $f$. This is nontrivial already for Kummer surfaces (where, for $p>2$, the minimal resolution is a K3 surface, and hence has trivial \'etale fundamental group \cite[pp.\ 3--6]{HuybrechtsK3}). 
\end{itemize} 
\noindent The structure of the paper is as follows: After some generalities, we introduce the hypotheses in Section \ref{sec:generalH} and prove the main result, conditional on the hypotheses, in the following section. Then, in Section \ref{sec:disc} we discuss the validity of the hypotheses in various settings (giving examples and counterexamples). The main theorems then follow immediately from these results. In the first appendix, we consider the radius of convergence of $\zeta_f(z)$, and in the second appendix, we compute a collection of examples of tame zeta functions of dynamically affine maps. 
  
\section{Generalities} \label{generalities}

\subsection*{Relations between zeta functions} 
\begin{proposition}\label{prop:zetaexpands}
The tame and full dynamical zeta function are related by the following equalities of formal power series:
\begin{equation}\label{eq:tameexpand}
\zeta_f^*(z)=\frac{\zeta_f(z)}{\sqrt[p]{\zeta_{f^{p}}(z^p)}},\qquad \zeta_f(z)=\prod_{i\geqslant 0}\sqrt[p^i]{\zeta_{f^{p^i}}^*(z^{p^i})}.
\end{equation}
\end{proposition}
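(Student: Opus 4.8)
The plan is to reduce everything to additive identities between formal power series by taking logarithms. All of $\zeta_f(z)$, $\zeta_f^*(z)$ and the $\zeta_{f^{p^i}}(z^{p^i})$ lie in $1+z\C[[z]]$, on which $\log$ is a bijection onto $z\C[[z]]$ with inverse $\exp$; the symbol $\sqrt[p]{\,\cdot\,}$ is to be read as $\exp(\frac1p\log(\cdot))$, the unique element of $1+z\C[[z]]$ whose $p$-th power is the given series. So it suffices to verify the claimed identities after applying $\log$.

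First I would prove the one-step identity $\log\zeta_f(z)=\log\zeta_f^*(z)+\frac1p\log\zeta_{f^p}(z^p)$. Splitting $\sum_{n\geqslant1}f_n z^n/n$ according to whether $p\mid n$, the part with $p\nmid n$ is by definition \eqref{deftame} equal to $\log\zeta_f^*(z)$. For the part with $p\mid n$, substitute $n=pm$ to get $\frac1p\sum_{m\geqslant1}f_{pm}(z^p)^m/m$. The key observation is that the $m$-th iterate of $f^p$ is $f^{pm}$, so $f^p$ is again confined and its fixed-point sequence is $(f^p)_m=f_{pm}$; hence this sum equals $\frac1p\log\zeta_{f^p}(z^p)$. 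Exponentiating gives the first displayed identity of the proposition.

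Next I would iterate. The one-step identity holds for every confined self-map, so applying it to $f^{p^i}$ with $z\mapsto z^{p^i}$ (a legitimate substitution of power series since $z^{p^i}$ has zero constant term) and inducting on $N$ yields
\[
\log\zeta_f(z)=\sum_{i=0}^{N-1}\frac1{p^i}\log\zeta_{f^{p^i}}^*(z^{p^i})+\frac1{p^N}\log\zeta_{f^{p^N}}(z^{p^N}).
\]
Finally I would let $N\to\infty$ in the $z$-adic topology on $\C[[z]]$. The only point needing care is convergence, which is immediate: $\zeta_{f^{p^N}}(z^{p^N})\in1+z^{p^N}\C[[z]]$ and likewise $\zeta_{f^{p^i}}^*(z^{p^i})\in1+z^{p^i}\C[[z]]$, so their logarithms lie in $z^{p^i}\C[[z]]$; hence the remainder term tends to $0$ and the series $\sum_{i\geqslant0}\frac1{p^i}\log\zeta_{f^{p^i}}^*(z^{p^i})$ converges. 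Exponentiating the limiting identity and using that $\exp$ carries this convergent sum to a convergent product gives the second identity. I do not anticipate a genuine obstacle; the whole content is the bookkeeping identity $(f^p)_m=f_{pm}$ together with the elementary $z$-adic convergence just noted.
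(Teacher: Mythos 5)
Your proposal is correct and follows essentially the same route as the paper: split $\sum f_n z^n/n$ according to $p\mid n$, use $(f^p)_m=f_{pm}$ to identify the divisible part with $\frac1p\log\zeta_{f^p}(z^p)$, and iterate over $f^{p^i}$ for the second identity. The extra care you take with the $z$-adic convergence and the meaning of $\sqrt[p]{\cdot}$ is implicit in the paper's proof but not an additional idea.
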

\begin{proof}
For the first equality, note that
\begin{equation*}
\log\zeta_f^*(z)=\sum_{n\geqslant 1}f_n \frac{z^n}{n}-\frac{1}{p}\sum_{m\geqslant 1}f_{pm}\frac{z^{pm}}{m}=\log\left(\zeta_f(z)\zeta_{f^{p}}(z^p)^{-1/p}\right).
\end{equation*}
The second equality follows by applying the first one to the functions $f^{p^i}$ for $i\in \Z_{\geqslant 0}$.
\end{proof}

\begin{remark}\label{rem:zetaMV}
A useful computational fact is the following: if $f \colon S \rightarrow S$ is a map and $S$ decomposes as a union $S=S_1 \cup S_2$ with $f(S_1) \subseteq S_1$ and $f(S_2) \subseteq S_2$, then 
$$ \zeta_f(z) = \frac{\zeta_{f|_{S_1}}(z) \cdot \zeta_{f|_{S_2}}(z)}{ \zeta_{f|_{S_1 \cap S_2}}(z)}, $$
and similarly for $\zeta_f^*(z)$. 
\end{remark}

\subsection*{Recurrences} We recall some well-known facts (see  e.g.\ \cite[\S 1]{D1}).
If $(a_n)_{n\geqslant 1}$ is a sequence of complex numbers, then the ordinary generating function 
$\sum_{n\geqslant 1}a_n z^n
$
 is rational if and only if the sequence is linear recurrent, and if and only if 
there exist $\lambda_i\in\C^{\tm}$ and polynomials $p_i\in\C[z]$ such that
\begin{equation} \label{lrec} a_n=\sum_{i=1}^r p_i(n)\lambda_i^n \end{equation} for sufficiently large $n$. The statement that the  zeta function
\begin{equation}\label{eqn:zetadfn} F(z)=\exp\left(\sum_{n\geqslant 1} a_n \frac{z^n}{n}\right)
\end{equation}
 is rational is stronger: this happens if and only if Equation (\ref{lrec}) holds for all $n\in\Z_{>0}$ with the $p_i(n)$ replaced by integers $m_i$ independent of $n$. The $\lambda_i$ occurring in (\ref{lrec}) are called the \emph{roots} of the recurrence, the polynomials $p_i$ their \emph{multiplicities}. We say that $(a_n)$ satisfies the \emph{dominant root assumption} if there is a unique root $\lambda_i$ of maximal absolute value, possibly with multiplicity $\neq 1$. 
 
 For a zeta function $F(z)$ in \eqref{eqn:zetadfn}, we may consider its tame variant $$F^*(z)=\exp\left(\sum_{\substack{n\geqslant 1\\p{\nmid}n}} a_n \frac{z^n}{n}\right).$$ It follows from the formula \begin{equation}\label{eqn:finaltame} F^*(z)=F(z)\cdot\left( \prod_{j=0}^{p-1}F(e^{\frac{2i\pi j }{p}}z)\right)^{-1/p}\end{equation} that if $F(z)$ is rational, then $F^*(z)$ is root-rational.
 
 \subsection*{Algebraicity properties and differential equations} If a formal power series $F(z)$ satisfies a nontrivial linear differential equation over $\C(z)$, it is said to be \emph{holonomic}. If $F(z)$ is algebraic over $\C(z)$, it is holonomic \cite[Thm.\ 6.4.6]{StanleyEC2}. On the other hand, if $F(z)$ converges on some nontrivial open disc $D$ and has natural boundary along $\partial D$, then it cannot be holonomic, since a holonomic function has only finitely many singularities (for a precise statement, see \cite[4(a)]{Stanley}). 

The equivalence statement in Theorem \ref{thm:main}\ref{thm:main2} is implied by the following lemma, which is certainly well-known, but for which we were unable to find a convenient reference. (A more general result can be found in \cite[Exercise 6.62]{StanleyEC2} together with an argument attributed to B.~Dwork and M.~F.~Singer.)

\begin{lemma} \label{diffeq} An algebraic function $F(z)\in \C(\!(z)\!)$ is root-rational if and only if  $f$ satisfies a first order homogeneous differential equation $F'(z)= R(z) F(z)$ with $R(z)\in \C(z)$.\end{lemma}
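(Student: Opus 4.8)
The plan is to prove both implications directly. For the easy direction, suppose $F(z)^t \in \C(z)$ for some $t \in \Z_{>0}$; write $F(z)^t = G(z)$ with $G \in \C(z)$. Differentiating, $t F(z)^{t-1} F'(z) = G'(z)$, and since $F(z)^t = G(z)$ we get $F'(z) = \frac{1}{t} \frac{G'(z)}{G(z)} F(z)$, so $R(z) = G'(z)/(t\,G(z)) \in \C(z)$ works. (One should note $F \not\equiv 0$ for the statement to be meaningful, and that $G'/G$ is a logarithmic derivative, so this is exactly $R = \frac{1}{t}(\log G)'$.)

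For the converse, suppose $F$ is algebraic over $\C(z)$ and satisfies $F'(z) = R(z) F(z)$ with $R \in \C(z)$. The key idea is to compute the logarithmic derivative of $F$ along the Galois conjugates. Let $L/\C(z)$ be a finite Galois extension containing $F$ (pass to the Galois closure of $\C(z)(F)$), with Galois group $\mathrm{Gal}(L/\C(z))$ acting on the $\C(z)$-conjugates $F = F_1, F_2, \ldots, F_s$ of $F$. The differential equation $F' = RF$ is defined over $\C(z)$, so every conjugate satisfies $F_j' = R F_j$ as well (the derivation on $L$ extends the one on $\C(z)$ and commutes with the Galois action, as $\C(z)$ is the fixed field). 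Hence each $F_j'/F_j = R$, and therefore
\begin{equation*}
\frac{(F_1 F_2 \cdots F_s)'}{F_1 F_2 \cdots F_s} = \sum_{j=1}^s \frac{F_j'}{F_j} = sR.
\end{equation*}
Now set $N(z) := \prod_{j=1}^s F_j$, the norm of $F$ from $L$ to $\C(z)$, which lies in $\C(z)$. We have shown $N'/N = sR$. On the other hand, $F'/F = R$, so $(F^s)'/F^s = sR = N'/N$, which gives $(F^s/N)' \cdot \frac{1}{F^s/N}\cdot(F^s/N)$-type cancellation: more cleanly, $(F^s/N)'/(F^s/N) = sR - sR = 0$, so $F^s/N$ is a constant $c \in \C^\times$ (it is a nonzero element of $L$ with zero logarithmic derivative, and the field of constants is $\C$ since we work over $\C$). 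Thus $F^s = c\,N(z) \in \C(z)$, so $(c^{-1}F)^s \in \C(z)$; replacing $F$ by $c^{-1}F$ or absorbing the constant appropriately (one may take an $s$-th root of $c$ in $\C$), $F$ is root-rational with $t = s$.

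The main point requiring care — and the only genuine obstacle — is the step identifying the field of constants of $L$ with $\C$ and justifying that an element with zero logarithmic derivative is constant: this needs that $L$, as an algebraic extension of $\C(z)$, has field of constants equal to $\C$ (true since $\C$ is algebraically closed), and that for the standard derivation $d/dz$ the kernel on $L$ is exactly this field of constants. Both are standard facts from differential algebra / function field theory, but they are what makes the argument work. A secondary subtlety is ensuring the derivation extends uniquely from $\C(z)$ to the separable (hence, in characteristic $0$, automatically separable) extension $L$ and is Galois-equivariant; this is again routine since $\C(z)$ has characteristic $0$ and every finite extension is separable, so the extension of the derivation is unique and commutes with all $\C(z)$-automorphisms.
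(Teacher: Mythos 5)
Your proof is correct, but it takes a genuinely different route from the paper's. The paper stays entirely inside $\C(z)[t]$: it differentiates the minimal polynomial $P=t^d+a_{d-1}t^{d-1}+\dots+a_0$ of $F$, substitutes $F'=RF$ to get the identity $P^D+tRP'=dRP$, and compares coefficients to find that each $a_i$ satisfies $a_i'=(d-i)Ra_i$; since $F^{d-i}$ satisfies the same equation, $a_i=c_iF^{d-i}$ for constants $c_i$, and any nonvanishing $a_i$ exhibits $F$ as root-rational. You instead pass to the Galois closure, show all conjugates satisfy the same equation, and compare $F^s$ with the product $N$ of the conjugates. The two arguments share the same differential-algebra kernel (two nonzero solutions of $y'=cRy$ in a field with constants $\C$ differ by a scalar), and in fact your $N$ is just $(-1)^d a_0$, so your proof is essentially the paper's argument specialised to the constant coefficient, reached via Galois theory rather than coefficient comparison. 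What your version buys: $a_0\neq 0$ is automatic from irreducibility (when $F\neq 0$), so you avoid the paper's case distinction about which $a_i$ vanish. What it costs: you must invoke the unique extension of the derivation to $L$ and its Galois-equivariance, plus the identification of the constants of $L$ with $\C$ --- all standard in characteristic zero, as you note, but the paper's computation avoids them entirely. Two small points: your ``norm from $L$'' is really the product over the distinct conjugates (the actual field norm from $L$ would carry a multiplicity $[L:\C(z)(F)]$, which is harmless but worth stating precisely), and the degenerate case $F=0$ should be set aside at the outset, as the paper does.
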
 

\begin{proof} First assume that $F(z)$ is root-rational, i.e.\ $F(z)=q(z)^k$ with $q(z)\in \C(z)$, $k\in \Q$. We may assume that $q(z)\neq 0$, and then $F(z)$ satisfies the equation $F(z)'=R(z) F(z)$ with $R(z)=\frac{kq'(z)}{q(z)}$.

The converse direction can be proven by direct integration and partial fraction expansion of $R(z)$, but we give a somewhat different argument. Assume that $F(z)$ satisfies the equation $F(z)'=R(z)F(z)$ with $R(z)\in \C(z)$, where we may assume $R(z)\neq0$. Let $P\in \C(z)[t]$ be the minimal polynomial of $F(z)$ over $\C(z)$. Write $P=t^d+a_{d-1}(z)t^{d-1}+\dots+a_0(z)$ with $a_i(z)\in\C(z)$. Differentiating the equation $P(F(z))=0$ gives \begin{equation} \label{eqn:ratalg} P^D(F(z))+P'(F(z))F(z)'=0,\end{equation} where $P^D=\sum_{i=0}^{d} a_i'(z) t^i$ is obtained from $P$ by differentiating the coefficients and $P'=\sum_{i=0}^d ia_i(z)t^{i-1}$ is the usual derivative of $P$. Substituting $F'(z)=R(z)F(z)$ into (\ref{eqn:ratalg}), we see that $F$ is a root of the polynomial $P^D + tR(z)P'$, which is a polynomial of degree $d$ with leading coefficient $dR(z)$, and hence $$P^D+tR(z)P'=dR(z)P.$$ Comparing the coefficients at $t^i$ for $i=0,\ldots,d-1$, we see that each $a_i(z)$ satisfies the equation $$a_i'(z)=(d-i)R(z)a_i(z),$$ which differs from the equation satisfied by $F(z)$ only by a multiplicative constant. Comparing these solutions gives $a_i(z)=c_iF(z)^{d-i}$ for some $c_i\in \C$. If $a_i(z)=0$ for all $i\in\{1,\ldots,d\}$, we get $F(z)=0$. Otherwise, for some $i$ we have $a_i(z)\neq0$, and $F(z)=(c_i^{-1}a_i(z))^{1/(d-i)}$ is root-rational.
\end{proof}

Thus, Theorem  \ref{thm:main}\ref{thm:main2} immediately implies the result alluded to in the introduction: 

\begin{corollary}  \label{deq} If $f \colon {\PP^1} \rightarrow \PP^1$ is a dynamically affine map, then the \emph{pair} of zeta functions $(F_1(z),F_2(z)) = (\zeta_f(z), \zeta_{f^p}(z))$ satisfies a nonlinear first order differential equation  
$$ F_1'(z) F_2(z^p) - F_1(z) F_2'(z^p) z^{p-1} = R(z) F_1(z) F_2(z^p) $$ for some rational function $R(z) \in \C(z)$, regardless of whether of not $f$ is coseparable.  
\end{corollary}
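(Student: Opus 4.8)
The plan is to deduce the corollary purely formally from Theorem~\ref{thm:main}\ref{thm:main2}, using no further dynamical input. By that part of the theorem, $\zeta_f^*(z)$ is root-rational for every dynamically affine $f\colon\PP^1\to\PP^1$, and by Lemma~\ref{diffeq} this is equivalent to the existence of $R(z)\in\C(z)$ with
\[
(\zeta_f^*)'(z)=R(z)\,\zeta_f^*(z).
\]
The whole proof then consists of rewriting this equation in terms of $F_1=\zeta_f$ and $F_2=\zeta_{f^p}$.

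First I would invoke the first identity of Proposition~\ref{prop:zetaexpands}, namely $\zeta_f^*(z)=F_1(z)\,F_2(z^p)^{-1/p}$. Since $F_2(0)=\zeta_{f^p}(0)=1$, the series $F_2(z^p)$ lies in $1+z\C[[z]]$, so every fractional power $F_2(z^p)^{s}$ with $s\in\Q$ is an unambiguously defined element of $1+z\C[[z]]$ (via the binomial series), and the identity of Proposition~\ref{prop:zetaexpands} is an identity in that ring. Differentiating $\zeta_f^*(z)=F_1(z)F_2(z^p)^{-1/p}$ by the product and chain rules, using that the derivative of $F_2(z^p)$ is $pz^{p-1}F_2'(z^p)$, gives
\[
(\zeta_f^*)'(z)=F_1'(z)F_2(z^p)^{-1/p}-F_1(z)\,z^{p-1}F_2'(z^p)\,F_2(z^p)^{-1/p-1}.
\]

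Substituting this into $(\zeta_f^*)'(z)=R(z)F_1(z)F_2(z^p)^{-1/p}$ and multiplying through by the unit $F_2(z^p)^{1+1/p}$ clears all fractional powers and yields exactly
\[
F_1'(z)F_2(z^p)-F_1(z)F_2'(z^p)z^{p-1}=R(z)\,F_1(z)F_2(z^p),
\]
which is the asserted (nonlinear, first order) differential equation for the pair $(F_1,F_2)$. As Theorem~\ref{thm:main}\ref{thm:main2} holds for all dynamically affine $f\colon\PP^1\to\PP^1$, so does this identity, irrespective of whether $f$ is coseparable.

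I do not expect any genuine obstacle here: the only points requiring care are that the manipulations involving $F_2(z^p)^{1/p}$ must be carried out in the ring $1+z\C[[z]]$, where such powers are unambiguous, and that one must retain the chain-rule factor $z^{p-1}$ throughout. All the substantive content is already packaged in Theorem~\ref{thm:main}\ref{thm:main2} and its reformulation via Lemma~\ref{diffeq}.
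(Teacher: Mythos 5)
Your proposal is correct and is essentially identical to the paper's own proof: invoke Theorem~\ref{thm:main}\ref{thm:main2} (via Lemma~\ref{diffeq}) to get $(\zeta_f^*)'=R\,\zeta_f^*$, then differentiate the first identity of \eqref{eq:tameexpand} and clear the fractional powers of $F_2(z^p)$. The only difference is that you carry out the differentiation and the bookkeeping of the $z^{p-1}$ chain-rule factor explicitly, which the paper leaves to the reader.
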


\begin{proof}
The root-rationality of $\zeta_f^*(z)$ implies that it satisfies a differential equation of the form $(\zeta_f^*(z))'=R(z)\zeta^*_f(z)$ for some rational function $R(z) \in \C(z)$. The result follows by taking derivatives in the first identity in (\ref{eq:tameexpand}). 
\end{proof} 

\section{Introduction of the general hypotheses} \label{sec:generalH}
Let $f\colon V\to V$ be a dynamically affine map with data as in diagram \eqref{eq:dam}. Denote by $\Orb_f(x):=\{f^n(x)\mid n\in\Z_{\geqslant 0}\}$ the forward orbit of $x\in V(K)$ under $f$. For an isogeny $\tau\in\End(G)$, we denote by $\deg(\tau)$ and $\deg_{\mathrm{i}}(\tau)$ the degree and inseparable degree of the field extension $K(G)/\tau_*K(G)$, respectively. Then we have
\begin{equation}\label{eq:kerdegs}
\#\ker(\tau)=\deg(\tau)/\deg_{\mathrm{i}}(\tau).
\end{equation}
The following lemma, taken from \cite[Lemma 2.4]{BridyBordeaux} (cf.~Remark \ref{rem:Bridylemform}), will be crucial to control  the sequence  $(f_n)$, as it allows us to express $f_n$ in terms of kernels of isogenies on the algebraic group $G$. The proof will be given in Section~\ref{pra}. 
\begin{lemma}\label{lem:key}
Let $f\colon V\to V$ be a dynamically affine map. Consider the set $$C:=\{x\in V(K)\mid\Orb_f(x)\cap\iota(({\Gamma}\backslash G) (K))=\emptyset\}.$$ Then
\begin{equation}\label{eq:key}
f_n=(f|_C)_n+\frac{1}{|\Gamma|}\sum_{\gamma\in\Gamma}\#\ker(\sigma^n-\gamma).
\end{equation}
\end{lemma}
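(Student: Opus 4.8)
Write $W:=\iota\big((\Gamma\backslash G)(K)\big)\subseteq V(K)$ for the dense open subset appearing in the definition. The plan is first to cut the count of fixed points of $f^n$ into a ``$C$-part'' and a ``$G$-part'', and then to evaluate the latter by a double counting on $G$. For the first step I would note that $\sigma$, being an isogeny, is surjective, so $\psi=\sigma(\cdot)+h$ is surjective, and hence so is the induced map $\bar\psi\colon\Gamma\backslash G\to\Gamma\backslash G$ (which exists because the top square of \eqref{eq:dam} commutes); by commutativity of \eqref{eq:dam} this gives $f(W)=W$. Thus $W$ is forward-invariant, and so is $C=\{x\in V(K)\mid\Orb_f(x)\cap W=\emptyset\}$ since $\Orb_f(f(x))\subseteq\Orb_f(x)$. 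I would then check that every fixed point of $f^n$ lies in the \emph{disjoint} union $C\sqcup W$: disjointness is immediate, and if $x=f^n(x)\notin C$, then $f^k(x)\in W$ for some $k$, so $f^{jn}(x)\in W$ whenever $jn\geqslant k$ by forward-invariance of $W$, while $f^{jn}(x)=x$; hence $x\in W$. Restricting $f^n$ to the forward-invariant sets $C$ and $W$ then yields $f_n=(f|_C)_n+(f|_W)_n$.

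Next I would transport $(f|_W)_n$ to $G$. The morphism $\iota$ identifies $W$ with $(\Gamma\backslash G)(K)$, which equals $\Gamma\backslash\big(G(K)\big)$ since $K$ is algebraically closed, and it identifies $f|_W$ with $\bar\psi$. As $\bar\psi^{\,n}$ is the map induced by $\psi^n$, and $\psi^n(g)=\sigma^n(g)+h_n$ with $h_n:=\sum_{i=0}^{n-1}\sigma^i(h)$, the fixed points of $\bar\psi^{\,n}$ correspond to the $\Gamma$-orbits inside
\begin{equation*}
X_n:=\{g\in G(K)\mid\psi^n(g)\in\Gamma g\}=\bigcup_{\gamma\in\Gamma}\{g\in G(K)\mid(\sigma^n-\gamma)(g)=-h_n\},
\end{equation*}
which is a $\Gamma$-stable set; thus $(f|_W)_n=\#(X_n/\Gamma)$. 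I would compute this by counting $Y_n:=\{(g,\gamma)\in G(K)\times\Gamma\mid(\sigma^n-\gamma)(g)=-h_n\}$ two ways. Projecting to $\gamma$: if each $\sigma^n-\gamma$ is an isogeny, then $\{g\mid(\sigma^n-\gamma)(g)=-h_n\}$ is a nonempty coset of $\ker(\sigma^n-\gamma)$, so $\#Y_n=\sum_{\gamma\in\Gamma}\#\ker(\sigma^n-\gamma)$. Projecting to $g$: the fibre over $g\in X_n$ is $\{\gamma\mid\gamma g=\psi^n(g)\}$, a coset of $\mathrm{Stab}_\Gamma(g)$, of size $|\Gamma|/\#(\Gamma g)$; summing over one orbit gives $|\Gamma|$, so $\#Y_n=|\Gamma|\cdot\#(X_n/\Gamma)$. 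Comparing the two expressions gives $(f|_W)_n=\tfrac{1}{|\Gamma|}\sum_{\gamma\in\Gamma}\#\ker(\sigma^n-\gamma)$, which combined with the first step is \eqref{eq:key}.

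The step I expect to be the real obstacle is justifying that $\sigma^n-\gamma$ is an isogeny (equivalently, has finite kernel) for \emph{every} $n\geqslant1$ and $\gamma\in\Gamma$; this is exactly what makes the two expressions for $\#Y_n$ match, and it is the only ingredient above that is not purely formal. Existence of $\bar\psi$ forces a relation $\sigma\gamma=\varphi(\gamma)\sigma$ in $\End(G)$ for some automorphism $\varphi$ of $\Gamma$ (and forces $\Gamma$ to fix $h$). From this I would derive a twisted factorisation $\sigma^{kn}-c_k=B_k\cdot(\sigma^n-\gamma)$ in $\End(G)$, with $c_k\in\Gamma$ a twisted product of $\varphi$-conjugates of $\gamma$; choosing $k$ with $c_k=1$, which is possible because $\Gamma$ is finite, gives $\ker(\sigma^n-\gamma)\subseteq\ker(\sigma^{kn}-1)$, and the latter is finite because $\sigma$ is a confined isogeny. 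Since a finite-kernel endomorphism of the connected group $G$ is automatically surjective, $\sigma^n-\gamma$ is an isogeny. The remaining delicate bookkeeping — that $\iota$ really intertwines $f|_W$ with $\bar\psi$ on $K$-points and that $(\Gamma\backslash G)(K)=\Gamma\backslash G(K)$ — is standard once $K$ is algebraically closed.
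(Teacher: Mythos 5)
Your proposal is correct and follows essentially the same route as the paper: the paper outsources the $C$-versus-$\iota(\Gamma\backslash G)$ decomposition and the Burnside-type count $\frac{1}{|\Gamma|}\sum_\gamma\#(\psi^n-\gamma)^{-1}(0)$ to the proof of \cite[Lemma 2.4]{BridyBordeaux}, and isolates exactly the two points you flag — that $\sigma^n-\gamma$ is an isogeny (via the same twisted product $\alpha^{n(d-1)}(\gamma)\cdots\alpha^n(\gamma)\gamma=1$ and confinedness of $\sigma$) and that the fibre over $-h_n$ has the same size as the kernel — as its Lemma \ref{lem:silvertwist}. You correctly identified the isogeny claim as the crux; it is precisely the step that fails under Silverman's weaker definition, as the paper notes in Remark \ref{rem:Bridylemform}.
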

\noindent Combining Lemma \ref{lem:key} with \eqref{eq:kerdegs}, we see that in order to understand the sequence $(f_n)$ it suffices to control, for every $\gamma\in\Gamma$,
\begin{enumerate}[\textup{(}a\textup{)}]
\item \label{seq:a} the sequence $(f|_C)_n$;
\item \label{seq:b} the ``inseparable degree sequence'' $\deg_{\mathrm{i}}(\sigma^n-\gamma)$;
\item \label{seq:c} the ``degree sequence'' $\deg(\sigma^n-\gamma)$.
\end{enumerate}
Notice that the translation parameter $h\in G(K)$ no longer occurs in \eqref{eq:key}.

We now introduce the four hypotheses that we require in order to prove the main theorems. The first three hypotheses \ref{h1}, \ref{h2} and \ref{h3} are employed to control the sequences \ref{seq:a}, \ref{seq:b} and \ref{seq:c}, respectively, while \ref{h4} is a technical hypothesis that we require to avoid an unexpected cancellation of singularities in our proof of the existence of a natural boundary. 

We use the following \begin{quote} \textsc{convention}: If a hypothesis is assumed in an environment (definition, lemma, theorem, hypothesis, \dots), we label the environment by this hypothesis in square brackets. \end{quote}

\begin{hypothesis}{\textup{\textbf{(H1)}}} \label{h1} The zeta function corresponding to $f|_C$ is rational.
\end{hypothesis}

For the second hypothesis, we recall the following notion: a \emph{discrete valuation} on a (not necessarily commutative) ring $R$ is a map $v\colon R\to\Z\cup\{\infty\}$ such that for all $\tau,\tau_1,\tau_2\in R$ we have $v(\tau)=\infty$ if and only if $\tau=0$, $v(\tau_1\tau_2)=v(\tau_1)+v(\tau_2)$, and $v(\tau_1+\tau_2)\geqslant\min\{v(\tau_1),v(\tau_2)\}$. It follows from these properties that $v(\tau_1+\tau_2)=\min\{v(\tau_1),v(\tau_2)\}$ whenever $v(\tau_1)\neq v(\tau_2)$.

\begin{hypothesis}{\textup{\textbf{(H2)}}} \label{h2} Both $\sigma$ and $\Gamma$ belong to a subring $\RR$ of $\End(G)$ all of whose nonzero elements are isogenies, and such that there exists a discrete valuation $v\colon\RR\to\Z \cup\{\infty\}$ satisfying $\deg_{\mathrm{i}}(\tau)=p^{v(\tau)}$ for all isogenies $\tau\in \RR$. 
\end{hypothesis}

Note that the valuation $v$ considered in \ref{h2} takes only nonnegative values.

 Before introducing the last two hypotheses, we set up some notation. 
 \begin{notation}\label{not:sm} Let $v$ be as in \ref{h2}. For $m\in\Z_{\geqslant 0}$, we let $$\G_m:=\{\g\in\G\mid v(\g-1)\geqslant m\}.$$ This defines a descending filtration of normal subgroups of $\G$ $$\G=\G_0\supseteq\G_1\supseteq\cdots\supseteq\G_N=1,$$ where $$N:=\max\{v(\g-1)\mid \g\in\G, \g\neq 1\}+1.$$ For  $m\in \Z_{\geqslant 0}$ we define $s_m\in \Z_{>0}$ to be the smallest integer such that $v(\sigma^{s_m}-\g_m)\geqslant m$ for some $\g_m\in\G$; in general, $s_m$ might not exist, but $s_0$ certainly does, and we will show in Lemma \ref{lem:sm} that for $m>0$ either none of the $s_m$ exist or all do depending on whether or not $f$ is coseparable. Write $s:=s_N$ and $\tilde{\gamma}:=\g_N$.
\end{notation} 

\begin{hypothesis}{\textup{\textbf{(H3)}}}\label{h3} \textup{[\ref{h2}]} 
Let $m\in\Z_{\geqslant 0}$. If $s_m$ exists, then
\begin{equation*}
\exp\left(\frac{1}{|\Gamma_m|}\sum_{\substack{n\geqslant 1\\ \g\in\G_m}} \deg(\sigma^{s_mn}-\g\g_m^n)\frac{z^n}{n}\right)\in\C(z).
\end{equation*}
\end{hypothesis} 

\begin{remark} 
The statement of Hypothesis \ref{h3} a priori depends on the choice of the elements $\gamma_m$. However, it will follow from Lemma \ref{lem:valsubseq}\ref{bbb} below that it is independent of such a choice. 
\end{remark}

\begin{hypothesis}{\textup{\textbf{(H4)}}}\label{h4} \textup{[\ref{h2}]} 
The number $s$ exists and the sequence
\begin{equation}\label{eq:domrootseq}
\left(\deg(\sigma^{sn}-\tilde{\g}^n)\right)_{n\geqslant 1}
\end{equation}
is a linear recurrent sequence satisfying the dominant root assumption. 
\end{hypothesis} 

\begin{remark} If $s$ exists and \ref{h3} holds, then the sequence \eqref{eq:domrootseq} is automatically linear recurrent. Moreover, by Lemma \ref{lem:sm}, $s$ exists if and only if $f$ is not coseparable, and the element $\tilde{\gamma}\in\G$ is then unique.
\end{remark}

We then have the following results:

\begin{theorem}\label{thm:nb}
Assume $f \colon V\to V$ is a dynamically affine map satisfying the hypotheses \ref{h1}--\ref{h4}. Then $\zeta_f(z)$ is not holonomic. More precisely, it is a product of a root-rational function and a function  admitting a natural boundary along its circle of convergence.
\end{theorem}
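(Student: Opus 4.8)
The plan is to assemble $\zeta_f(z)$ out of the four sequences controlled by the hypotheses, using Lemma~\ref{lem:key} as the backbone, and then isolate the single ``bad'' Euler-type factor that produces the natural boundary.

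\textbf{Step 1: decompose $\zeta_f$ into manageable pieces.} Starting from Lemma~\ref{lem:key}, write $f_n=(f|_C)_n+\frac{1}{|\Gamma|}\sum_{\gamma\in\Gamma}\#\ker(\sigma^n-\gamma)$, and use \eqref{eq:kerdegs} to replace $\#\ker(\sigma^n-\gamma)$ by $\deg(\sigma^n-\gamma)/\deg_{\mathrm{i}}(\sigma^n-\gamma)=\deg(\sigma^n-\gamma)\cdot p^{-v(\sigma^n-\gamma)}$, invoking \ref{h2}. Exponentiating termwise, $\zeta_f(z)$ becomes a product: a factor $\zeta_{f|_C}(z)$, which is rational by \ref{h1}; and for each $\gamma$ a factor $\exp\big(\sum_n \deg(\sigma^n-\gamma)p^{-v(\sigma^n-\gamma)}\frac{z^n}{n}\big)$. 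The game is now to organise these $\gamma$-factors according to the filtration $\Gamma=\Gamma_0\supseteq\cdots\supseteq\Gamma_N=1$ of Notation~\ref{not:sm}.

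\textbf{Step 2: stratify by the valuation and extract rational layers.} Group the indices $n$ according to the exact value of $v(\sigma^n-\gamma)$. I expect (and would prove using Lemma~\ref{lem:valsubseq} and Lemma~\ref{lem:sm}, which describe when the $s_m$ exist and pin down $v(\sigma^{s_m n}-\gamma\gamma_m^n)$) that $v(\sigma^n-\gamma)\geqslant m$ forces $n$ to lie in an arithmetic progression governed by $s_m$, with $\gamma$ in the corresponding coset of $\Gamma_m$. Summing over these progressions turns the $\gamma$-factors into a telescoping product over $m=0,1,\dots,N$ of exponentials of the shape appearing in \ref{h3} (with $z$ replaced by $z^{s_m}$ and divided by $p^m$ in the exponent). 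Hypothesis \ref{h3} says each such exponential — taken \emph{without} the $p^{-m}$ scaling — is rational; incorporating the $p^{-m}$ just takes a rational function to a root-rational one. So all layers with $m<N$ contribute a root-rational function. The only layer that is \emph{not} covered by \ref{h3} being rational-enough is the innermost one, $m=N$: here $\Gamma_N=1$, $s=s_N$, $\tilde\gamma=\gamma_N$, and the relevant factor is essentially $\exp\big(p^{-N}\sum_{n\geqslant1}\deg(\sigma^{sn}-\tilde\gamma^{\,n})\frac{z^{sn}}{sn}\big)$.

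\textbf{Step 3: the deepest factor has a natural boundary.} By \ref{h4}, the sequence $a_n:=\deg(\sigma^{sn}-\tilde\gamma^{\,n})$ is linear recurrent with a dominant root, say $a_n=\lambda^n(c+o(1))$ with $\lambda>1$ of multiplicity one (or with polynomial multiplicity, handled the same way). The factor $\exp\big(\text{const}\cdot\sum_n a_n z^{sn}/(sn)\big)$ then has radius of convergence $\lambda^{-1/s}$, and I want to show it has a natural boundary on $|z|=\lambda^{-1/s}$. This is exactly the Mahler-type argument referenced in the introduction (\cite{BCR}, and compare \cite{D1}): write $\sum a_n w^n$ as a rational function $R(w)$ whose dominant pole is at $w=\lambda^{-1}$; then $\sum a_n w^n/n=\int_0^w(R(t)-R(0))\,dt/t$ acquires logarithmic singularities, and exponentiating with an irrational/$1/p^N$-type exponent produces a singularity at \emph{every} point $w$ that is an accumulation point of the singular locus after the substitution $w\mapsto z^s$ and the iterated $p$-power structure forced by the inseparable part — the set of such points being dense on the circle $|z|=\lambda^{-1/s}$, whence no continuation past it. The \textbf{main obstacle} is precisely this last point: showing the singularities are genuinely dense on the circle of convergence (so that cancellation cannot occur), which is where hypothesis \ref{h4}'s dominant-root assumption and the "technical" hypothesis guarding against "unexpected cancellation of singularities" do the essential work; one must check that the root-rational prefactor assembled in Step 2 cannot cancel these singularities, i.e.\ that its (finitely many) zeros and poles avoid the dense singular set, which is immediate once that set is infinite.

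\textbf{Step 4: conclude.} Collecting Steps 1--3, $\zeta_f(z)=(\text{root-rational})\cdot(\text{function with natural boundary on its circle of convergence})$. Since a holonomic function has only finitely many singularities (\cite{Stanley}, as recalled after Lemma~\ref{diffeq}) and the natural-boundary factor cannot be cancelled by the root-rational one, $\zeta_f(z)$ is not holonomic. This is the assertion of Theorem~\ref{thm:nb}.
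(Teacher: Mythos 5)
Your Steps 1 and the first half of Step 2 match the paper's proof: decompose via Lemma~\ref{lem:key} and \eqref{eq:kerdegs}, stratify by the value of $v(\sigma^n-\gamma)$, and use Lemma~\ref{lem:valsubseq} plus \ref{h3} to make each stratum root-rational. But there is a genuine gap at the point where the natural boundary is supposed to appear. You truncate the stratification at $m=N$ and write the deepest factor as $\exp\bigl(p^{-N}\sum_{n\geqslant1}\deg(\sigma^{sn}-\tilde\gamma^{\,n})\frac{z^{sn}}{sn}\bigr)$, i.e.\ with a \emph{constant} inseparable-degree factor $p^{-N}$. That is wrong, and fatally so: $N$ bounds the filtration of $\Gamma$, not the valuations $v(\sigma^n-\gamma)$, which by Proposition~\ref{prop:valprop}\ref{unbounded} are unbounded in $n$. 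The stratification is an infinite product $\prod_{m\geqslant 0}\zeta_{f,m}(z)$, and the whole source of the natural boundary is that in the tail $v(\tau^n-1)$ ($\tau=\sigma^{s_M}\gamma_M^{-1}$) equals $C+v(n)$ or $C|n|_p^{-1}$ depending on $\operatorname{char}\End(G)$ — a genuinely $n$-dependent, non-eventually-periodic exponent. Computing this requires Lemma~\ref{lem:sigmagammacommute} (so that $\sigma^{s_M}$ and $\gamma_M$ commute and Proposition~\ref{prop:valprop}\ref{comm} applies), which your outline never invokes. With your constant $p^{-N}$ and $\deg(\sigma^{sn}-\tilde\gamma^{\,n})$ linear recurrent, the factor you isolate is itself root-rational, so your Step 3 is trying to extract a natural boundary from a function that does not have one.

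Consequently Step 3 cannot be repaired as stated: a rational $R(w)$ has finitely many poles, its logarithmic antiderivative finitely many singularities, and raising to a fixed fractional power yields finitely many branch points — never a dense singular set on a circle. The dense singularities in the paper come instead from the series $G_h(z)=\sum|n|_p^h z^n$ and $H_\beta(z)=\sum\beta^{|n|_p^{-1}}z^n$ of Lemma~\ref{lem:NBBasic}, whose Mahler-type functional equations force blow-up at all $p^k$-th roots of unity; one substitutes $\Lambda z^r$ (using the dominant root from \ref{h4} to control the error term $R(z)$) and transfers the boundary to the zeta function via \cite[Lemma 1]{BMW}. Your instinct that ``the iterated $p$-power structure forced by the inseparable part'' is the key is correct, but your bookkeeping in Step 2 erased exactly that structure, so the proof as written does not go through.
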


\begin{theorem}\label{thm:tame}
Assume $f \colon V\to V$ is a dynamically affine map satisfying the hypotheses \ref{h1}--\ref{h3}. Then $\zeta_f^*(z)$ is root-rational.
\end{theorem}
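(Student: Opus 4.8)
The plan is to use Lemma~\ref{lem:key} to split $f_n$ into the part coming from $f|_C$ (handled by \ref{h1}) and a ``group part'' built from the kernel sizes $\#\ker(\sigma^n-\g)$, to rewrite these via \eqref{eq:kerdegs} and \ref{h2} as $\deg(\sigma^n-\g)\,p^{-v(\sigma^n-\g)}$, to reorganise the resulting sum along the filtration $\G=\G_0\supseteq\cdots\supseteq\G_N=1$ into pieces each governed by \ref{h3}, and finally to observe that on the tame range $p\nmid n$ only finitely many of these pieces survive; passing from a rational zeta function to its tame version via \eqref{eqn:finaltame} then yields root-rationality.

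Concretely, by Lemma~\ref{lem:key}, \eqref{eq:kerdegs} and \ref{h2} we have $f_n=(f|_C)_n+\tfrac1{|\G|}\sum_{\g\in\G}\deg(\sigma^n-\g)p^{-v(\sigma^n-\g)}$. By \ref{h1} the zeta function of $f|_C$ is rational, hence $\zeta^*_{f|_C}$ is root-rational by \eqref{eqn:finaltame}, so it suffices to treat the group part. Since $v$ vanishes on $\G\subseteq\Aut(G)$, the non-archimedean inequality for $v$ shows that $\{\g\in\G\mid v(\sigma^n-\g)\geqslant m\}$ is, when nonempty, a coset of $\G_m$; and by minimality of $s_m$ it is nonempty exactly when $s_m$ exists and $s_m\mid n$, in which case (telescoping $\sigma^{s_mk}-\g_m^k$, with $n=s_mk$) it equals $\g_m^k\G_m$. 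Writing $D_m(n):=\sum_{\g:\,v(\sigma^n-\g)\geqslant m}\deg(\sigma^n-\g)$ and using the elementary identity $p^{-v}=1+\sum_{m\geqslant1}(p^{-m}-p^{1-m})\mathbf{1}_{v\geqslant m}$, the group part becomes $\tfrac1{|\G|}\bigl(D_0(n)+\sum_{m\geqslant1}(p^{-m}-p^{1-m})D_m(n)\bigr)$, a sum with only finitely many nonzero terms for each fixed $n$. Here $D_0(n)=\sum_\g\deg(\sigma^n-\g)$ (as $s_0=1$, $\G_0=\G$), and whenever $s_m$ exists, $D_m$ is supported on multiples of $s_m$ with $D_m(s_mk)=\sum_{\delta\in\G_m}\deg(\sigma^{s_mk}-\delta\g_m^k)$; thus \ref{h3} says precisely that $\exp\bigl(\tfrac1{|\G_m|}\sum_k D_m(s_mk)\tfrac{z^k}{k}\bigr)\in\C(z)$, independently of the choice of $\g_m$ by Lemma~\ref{lem:valsubseq}\ref{bbb}.

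Now pass to $\zeta_f^*$. By Lemma~\ref{lem:sm} there are two cases. If $f$ is coseparable, no $s_m$ with $m\geqslant1$ exists, so $D_m\equiv0$ for $m\geqslant1$ and $\zeta_f(z)=\zeta_{f|_C}(z)\cdot\exp\bigl(\tfrac1{|\G|}\sum_n D_0(n)\tfrac{z^n}{n}\bigr)$ is already rational by \ref{h1} and \ref{h3} (with $m=0$), so $\zeta_f^*$ is root-rational by \eqref{eqn:finaltame}. If $f$ is not coseparable, all $s_m$ exist, and since multiplication by $p$ can only raise $v$ by a factor of $p$ at a time (this is the content of the structure of $(s_m)$ in Lemma~\ref{lem:sm}), $p\mid s_m$ for all sufficiently large $m$; as $D_m$ is supported on $s_m\Z$, the term $D_m$ contributes to $\zeta_f^*$ only when $p\nmid s_m$, hence only for finitely many $m$. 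For each such $m$ and each $n$ with $p\nmid n$ one has $s_m\mid n\Leftrightarrow n=s_mk$ with $p\nmid k$, so $\sum_{p\nmid n}D_m(n)\tfrac{z^n}{n}=\tfrac1{s_m}\sum_{p\nmid k}D_m(s_mk)\tfrac{(z^{s_m})^k}{k}$; applying \eqref{eqn:finaltame} to the rational function of \ref{h3} with argument $z^{s_m}$ shows the associated exponential is root-rational, and raising it to the rational power $(p^{-m}-p^{1-m})|\G_m|/(|\G|s_m)$ and forming the finite product over the relevant $m$ (together with the $m=0$ term and $\zeta^*_{f|_C}$) keeps everything root-rational. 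Hence $\zeta_f^*(z)$ is root-rational.

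The main obstacle is the combinatorial/arithmetic bookkeeping in the middle step: showing cleanly that $\{\g\mid v(\sigma^n-\g)\geqslant m\}$ is a $\G_m$-coset controlled by the divisibility $s_m\mid n$ (which needs $\sigma$ to commute with $\G$ inside $\RR$ and careful use of the valuation, i.e.\ the content of Lemma~\ref{lem:valsubseq}), and, in the non-coseparable case, verifying that $p\mid s_m$ for all large $m$ so that the product over $m$ truly is finite. This last point — that the inseparable-degree valuation can only be pushed up one $p$-factor at a time — is exactly the reason the ``tame'' dynamics is so much tamer than the full dynamics, and is where the argument here diverges from the proof of Theorem~\ref{thm:nb}.
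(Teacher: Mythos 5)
Your strategy is the paper's: split off $f|_C$ via Lemma~\ref{lem:key} and \ref{h1}, stratify the group part by the value of $v(\sigma^n-\gamma)$ using Lemma~\ref{lem:valsubseq} and \ref{h3} (your identity $p^{-v}=1+\sum_{m\geqslant1}(p^{-m}-p^{1-m})\mathbf{1}_{v\geqslant m}$ is exactly the paper's telescoping $\zeta_{f,m}=(F_m/F_{m+1})^{1/p^m}$ in different notation), and then argue that only finitely many strata survive the restriction to $p\nmid n$. The coseparable case and the exponent bookkeeping $(p^{-m}-p^{1-m})|\Gamma_m|/(|\Gamma|s_m)$ are correct.

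The one step you have not actually proved is the crux you yourself flag: that $p\mid s_m$ for all sufficiently large $m$. This does \emph{not} follow from Lemma~\ref{lem:sm}, which only asserts that the $s_m$ exist and says nothing about their $p$-divisibility. The missing ingredients are Lemma~\ref{lem:sigmagammacommute} and Proposition~\ref{prop:valprop}\ref{comm}: choose $M\geqslant\max(N,\,v(p)/(p-1)+1)$; by Lemma~\ref{lem:sigmagammacommute} the elements $\sigma^{s_M}$ and $\gamma_M$ commute, so $\tau:=\sigma^{s_M}\gamma_M^{-1}$ satisfies the hypotheses of Proposition~\ref{prop:valprop}\ref{comm}\ref{zerop} or \ref{comm}\ref{pp}, giving $v(\tau^k-1)=C+v(k)$ resp.\ $C|k|_p^{-1}$ with $C:=v(\tau-1)\geqslant M$. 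Hence $v(\tau^k-1)=C$ is \emph{constant} on $k$ coprime to $p$, and since $\max_{\gamma\in\Gamma}v(\sigma^{s_Mk}-\gamma)=v(\tau^k-1)$ (ultrametric inequality plus $\Gamma_N=1$), any $n\in S_m$ coprime to $p$ forces $m\leqslant C$; equivalently $p\mid s_m$ for all $m>C$. Note that commutativity here is essential: without Lemma~\ref{lem:sigmagammacommute} one cannot invoke Proposition~\ref{prop:valprop}\ref{comm} at all (see the quaternion counterexample following that proposition), so the appeal to ``the structure of $(s_m)$'' cannot be waved through. With this repair your argument closes and coincides with the paper's proof, which packages the same fact as the statement that $\zeta^*_{f,m}(z)=1$ for all $m\geqslant M$ with $m\neq C$.
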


\noindent The proofs of these theorems will be given in the next section.

\section{Proofs of Theorems \ref{thm:nb} and \ref{thm:tame}} \label{section:proof3.3-4}
\subsection*{Preliminary results on the action of \texorpdfstring{$\Gamma$}{ }} \label{pra}

\begin{lemma}\label{lem:silvertwist}
Let $f \colon V\to V$ be a dynamically affine map.
\begin{enumerate}[\textup{(}i\textup{)}]
\item\label{state:autotwist} There exists a group automorphism $\alpha\colon \G\to\G$ such that for any $\g\in\G$, $\psi\g=\alpha(\g)\psi$ and $\sigma\g=\alpha(\g)\sigma$.
\item\label{state:dynamicallyconfined} 
The map $\sigma^n-\g$ is an isogeny for all $n\in\Z_{>0}$ and $\g\in\G$.
\item\label{state:psikersig} 
$\#(\psi^n-\g)^{-1}(0)=\#(\sigma^n-\g)^{-1}(0)$ for all $n\in\Z_{>0}$ and $\g\in\G$.
\end{enumerate}
\end{lemma}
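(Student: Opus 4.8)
The plan is to prove the three items in the order listed, using \ref{state:autotwist} to obtain \ref{state:dynamicallyconfined}, and the latter to obtain \ref{state:psikersig}.

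For \ref{state:autotwist}, I would exploit that $\psi$ descends along $\pi\colon G\to\G\backslash G$: commutativity of the top square of \eqref{eq:dam} says $\pi\circ\psi$ factors through $\pi$, so for every $\g\in\G$ and $g\in G$ the points $\psi(\g g)$ and $\psi(g)$ lie in one $\G$-orbit. Fixing $\g$, the sets $Z_{\g'}:=\{g\in G : \psi(\g g)=\g'\psi(g)\}$ ($\g'\in\G$) are closed and cover the irreducible variety $G$; hence $Z_{\g'}=G$ for some $\g'$, which is unique because $\psi$ is dominant ($\psi$ is $\sigma$ followed by a translation and $\sigma$ is a surjective isogeny). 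Setting $\alpha(\g):=\g'$ gives $\psi\g=\alpha(\g)\psi$. Multiplicativity of $\alpha$ follows by computing $\psi\circ(\g_1\g_2)$ in two ways and using dominance of $\psi$, and injectivity of $\alpha$ follows because $\alpha(\g)=1$ makes $g\mapsto\g(g)-g$ a homomorphism with image in the finite group $\ker\sigma$, hence trivial on the connected group $G$, forcing $\g=1$; since $\G$ is finite, $\alpha\in\Aut(\G)$. Finally, writing out $\psi(\g g)=\sigma(\g g)+h$ and $\alpha(\g)\psi(g)=\alpha(\g)\sigma(g)+\alpha(\g)(h)$ shows the two homomorphisms $\sigma\g$ and $\alpha(\g)\sigma$ differ by the constant $\alpha(\g)(h)-h$; evaluating at $0\in G$, where homomorphisms vanish, forces this constant to be $0$, so $\sigma\g=\alpha(\g)\sigma$ (and, incidentally, $h$ is $\G$-invariant).

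For \ref{state:dynamicallyconfined}, note that since $G$ is connected and commutative, $\sigma^n-\g$ is a homomorphism and is an isogeny as soon as it has finite kernel. Iterating \ref{state:autotwist} gives $\sigma^n\delta=\alpha^n(\delta)\sigma^n$ for all $\delta\in\G$, so if $g\in\ker(\sigma^n-\g)$, i.e.\ $\sigma^n g=\g g$, an easy induction yields $\sigma^{kn}g=\delta_k(g)$ with $\delta_1=\g$ and $\delta_{k+1}=\alpha^n(\delta_k)\g$. All $\delta_k$ lie in the finite group $\G$, and writing $\delta_k=\beta^{k-1}(\g)\cdots\beta(\g)\g$ with $\beta:=\alpha^n$ of finite order $e$ in $\Aut(\G)$ shows $\delta_{ke}=\delta_e^{\,k}$; choosing $M:=e\cdot\mathrm{ord}(\delta_e)$ gives $\delta_M=1$, hence $\sigma^{Mn}g=g$. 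Thus $\ker(\sigma^n-\g)\subseteq\ker(\sigma^{Mn}-1)$, which is finite because $\sigma$ is a confined isogeny; so $\sigma^n-\g$ has finite kernel and is an isogeny. For \ref{state:psikersig}, iterating the affine map gives $\psi^n(g)=\sigma^n(g)+h_n$ with $h_n:=\sum_{i=0}^{n-1}\sigma^i(h)$, so $(\psi^n-\g)(g)=(\sigma^n-\g)(g)+h_n$ and $(\psi^n-\g)^{-1}(0)$ is the fibre of the isogeny $\sigma^n-\g$ over $-h_n$; since that isogeny is a surjective homomorphism with kernel $(\sigma^n-\g)^{-1}(0)$, every fibre over a $K$-point is a translate of the kernel, hence has the same number of $K$-points, which is the asserted equality.

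The only step requiring genuine care is \ref{state:dynamicallyconfined}: one must propagate the finiteness of the fixed-point sets of $\sigma$ (its confinedness) to the twisted maps $\sigma^n-\g$, and the twisting automorphism $\alpha$ — trivial for the examples on $\PP^1$ and for Kummer varieties, but not in general — is precisely what makes $\sigma^{kn}$ act on $\ker(\sigma^n-\g)$ through the cocycle $(\delta_k)$ rather than through powers of $\g$, so the main work is checking that this cocycle is eventually trivial. Everything else is formal manipulation with the commuting diagram \eqref{eq:dam} together with the facts that $G$ is connected (hence irreducible, with no proper full-dimensional closed subgroup) and that $\psi$, $\sigma$, and $\sigma^n-\g$ are surjective.
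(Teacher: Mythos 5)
Your proof is correct and follows essentially the same route as the paper's: part (ii) is handled via the twisted product $\beta^{d-1}(\gamma)\cdots\beta(\gamma)\gamma$ with $\beta=\alpha^n$, reducing finiteness of $\ker(\sigma^n-\gamma)$ to confinedness of $\sigma$, and part (iii) via the fact that all fibres of the isogeny $\sigma^n-\gamma$ are translates of its kernel. The only differences are cosmetic: you prove the existence of $\alpha$ and the relation $\sigma\gamma=\alpha(\gamma)\sigma$ directly from the commuting diagram (the paper cites Silverman, Prop.~6.77 for this), and you exhibit an explicit exponent $M$ with $\delta_M=1$ where the paper merely asserts that such a $d$ exists.
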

\begin{proof} 

\ref{state:autotwist} That $\alpha$ exists as a map of sets follows from \cite[Prop.\ 6.77(a)(b)]{Silverman}. Recall that, by assumption, $\sigma$ is surjective and has finite kernel. Now, for all $\g_1,\g_2\in\G$ we have $$\alpha(\g_1\g_2)\sigma=\sigma(\g_1\g_2)=(\sigma\g_1)\g_2=\alpha(\g_1)\alpha(\g_2)\sigma,$$ which implies that $\alpha$ is a group homomorphism. 
For $\g\in\ker(\alpha)$, we have $\sigma(\g-1)=0$, and so $\im(\g-1)\subseteq\ker(\sigma)$. Since $\ker(\sigma)$ is finite and $G$ is connected, we must have $\im(\g-1)=\{0\}$, and so $\g=1$. This shows that $\alpha$ is injective, and hence bijective.

\ref{state:dynamicallyconfined} Let $\g\in\G$ and $n\in\Z_{>0}$. We will show that $\sigma^n-\g$ has finite kernel. Suppose that $x\in G(K)$ is such that $\sigma^n(x)=\g(x)$. 
Put $\beta:=\alpha^n$. Then
\begin{equation}
\sigma^{dn}(x)=\left(\beta^{d-1}(\g)\cdots\beta(\g)\g\right)(x).
\end{equation}
Since $\beta$ is injective and $\G$ is finite, there exists $d\in\Z_{>0}$ for which $$\beta^{d-1}(\g)\cdots\beta(\g)\g=1,$$ so that $(\sigma^n-\g)^{-1}(0)\subseteq (\sigma^{dn}-1)^{-1}(0)$. Since by assumption $\sigma$ is confined, we have that $(\sigma^{dn}-1)^{-1}(0)$ is finite, and the desired result follows.

\ref{state:psikersig} For every $n$, there exists $h_n\in G(K)$ such that $\psi^n(g)=\sigma^n(g)+h_n$ for all $g\in G(K)$. We then have
\begin{equation*}
\#(\psi^n-\g)^{-1}(0)=\#(\sigma^n-\g)^{-1}(-h_n)=\#(\sigma^n-\g)^{-1}(0),
\end{equation*}
where in the last equality we use the fact that $\sigma^n-\g$ is an isogeny.
\end{proof}

\begin{proof}[Proof of Lemma \ref{lem:key}]
The proof of \cite[Lemma 2.4]{BridyBordeaux} shows that
\begin{equation*}
f_n=(f|_C)_n+\frac{1}{|\G|}\sum_{\g\in\G}\#(\psi^n-\g)^{-1}(0).
\end{equation*}
The desired result now follows from Lemma \ref{lem:silvertwist}\ref{state:psikersig}.
\end{proof}

\begin{remark}\label{rem:Bridylemform}
The claim in \cite[Lemma 2.4]{BridyBordeaux} that \eqref{eq:key} holds for dynamically affine maps using Silverman's definition (under the additional assumption that $\psi$ is surjective), is incorrect. For example, when $V=G=E\times E$ for an elliptic curve $E$, $\G=\{1\}$, $\sigma=[1]\times [2]$, and $h=(P,0)$ with $P\in E(K)$ a non-torsion point, then $f_n=\psi_n=0$, but $\ker(\sigma^n-1)\supseteq E(K)\times\{0\}$ is infinite for all $n\in\Z_{>0}$. The mistake in the proof is that under the assumptions in Silverman's definition, Lemma \ref{lem:silvertwist}\ref{state:psikersig} 
 does not need to hold (for this one needs part \ref{state:dynamicallyconfined} 
 of the lemma, which is equivalent to $\sigma$ being confined). Nevertheless, in \cite{BridyBordeaux} the result is only applied for $\dim V=1$, where Silverman's definition implies confinedness of $\sigma$, hence none of the other results are affected.
\end{remark}

\subsection*{Preliminary results on valuations}

\begin{proposition} \label{prop:valprop}  Let $R$ denote a (not necessarily commutative) ring with a valuation $v$. Then the following statements hold for all $x,y\in R$ and $n\in\Z_{> 0}$\textup{:}  \begin{enumerate}[\textup{(}i\textup{)}] \item \label{domain} $R$ has no nontrivial zero divisors.
\item \label{charprime} The characteristic of $R$ is either zero or prime.
\item \label{Gident} We have $v(xy-yx)\geqslant v(x-y)$.
\item \label{noncomm} We have $v(x^n-y^n)\geqslant v(x-y)$.
\item \label{comm} Assume that $x$ and $y$ commute, $v(x)=v(y)=0$, and $v(x-y)>0$. Then\textup{:}
\begin{enumerate}[\textup{(}a\textup{)}]
\item \label{zerozero} if $\mathrm{char}(R)=0$ and $v(\Z- \{0\})=0$, then $v(x^n-y^n)=v(x-y)$\textup{;}
\item \label{zerop} if $\mathrm{char}(R)=0$ and $v(p)>0$ for some prime $p$, then if $v(x-y)>v(p)/(p-1)$, we have $v(x^n-y^n)=v(x-y)+v(n)$\textup{;}
\item \label{pp} if $\mathrm{char}(R)=p>0$, then $v(x^n-y^n)=v(x-y)\cdot |n|_p^{-1}$.
\end{enumerate}
\item \label{unbounded} In cases  \ref{zerop}  and \ref{pp} above, if $z \in R$ satisfies $v(z-1)>0$, then $v(z^n-1)$ is unbounded as $n$ ranges over $\Z_{>0}$.
\end{enumerate}
\end{proposition}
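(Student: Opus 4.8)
The plan is to establish the items roughly in the stated order, since the later ones build on the earlier ones. For \ref{domain}, if $xy=0$ with $x,y\neq 0$, then $v(x)+v(y)=v(xy)=\infty$, forcing $v(x)=\infty$ or $v(y)=\infty$, i.e.\ $x=0$ or $y=0$; contradiction. For \ref{charprime}, suppose $\Char(R)=ab$ with $a,b>1$; then $a\cdot 1_R$ and $b\cdot 1_R$ are nonzero (they are not $0$ in $R$ since the characteristic is exactly $ab$) but their product is $0$, contradicting \ref{domain}. So the characteristic is $0$ or prime. For \ref{Gident}, write $xy-yx=(x-y)y-y(x-y)$ and apply the ultrametric inequality together with $v(uv)=v(u)+v(v)\geqslant v(u)$ when $v(v)\geqslant 0$ — but we must be careful, $v(y)$ need not be $\geqslant 0$ in general; instead use $v((x-y)y)=v(x-y)+v(y)$ and $v(y(x-y))=v(y)+v(x-y)$, so both terms have valuation $v(x-y)+v(y)$, and after noting the bound we actually want is $v(x-y)$ we should instead observe $xy-yx = (x-y)y - y(x-y)$ has valuation $\geqslant \min = v(x-y)+v(y)$; to get the cleaner bound $\geqslant v(x-y)$ one rewrites differently or simply accepts the $+v(y)$ term — I expect the intended statement uses $xy - yx = x(y - x) + (x-y)x$ wait, let me instead use $xy - yx = (x-y)x - x(x - y)$? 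No. The correct identity is $xy-yx=(x-y)y-y(x-y)$ which indeed gives $v(xy-yx)\geqslant v(x-y)+v(y)$; since in the applications $v(y)\geqslant 0$ this suffices, but to match the stated inequality one notes $v(xy-yx)=v(yx-xy)$ and symmetrizes, or simply the bound $\geqslant v(x-y)$ holds whenever $v(y)\geqslant 0$, which is the only case used. I would phrase it that way. For \ref{noncomm}, telescope: $x^n-y^n=\sum_{i=0}^{n-1}x^i(x-y)y^{n-1-i}$, so each summand has valuation $v(x-y)+(\text{nonnegative in the relevant cases})$, hence $v(x^n-y^n)\geqslant v(x-y)$; again this uses $v(x),v(y)\geqslant 0$, or one argues as in \ref{Gident} that the commutator corrections do not lower the valuation.

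For \ref{comm}, assume $x,y$ commute and $v(x)=v(y)=0$, $v(x-y)=:e>0$. Write $x=y+t$ with $v(t)=e$, and expand $x^n=(y+t)^n=\sum_{k=0}^n \binom{n}{k}y^{n-k}t^k$ using commutativity, so
\begin{equation*}
x^n-y^n=\sum_{k=1}^{n}\binom{n}{k}y^{n-k}t^k.
\end{equation*}
The $k=1$ term is $n y^{n-1}t$, with valuation $v(n)+0+e$ (here $v(n)$ means $v(n\cdot 1_R)$). Every term with $k\geqslant 2$ has valuation $\geqslant v\!\left(\binom{n}{k}\right)+ke$. In case \ref{zerozero}, $\Char(R)=0$ and $v$ is trivial on $\Z\setminus\{0\}$, so $v(n)=0$ and $v\!\left(\binom{n}{k}\right)=0$; then the $k=1$ term has valuation exactly $e$ while every $k\geqslant 2$ term has valuation $\geqslant 2e>e$, so $v(x^n-y^n)=e$ by the strict-inequality property of valuations. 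In case \ref{zerop}, $v(p)>0$ and $e>v(p)/(p-1)$; the key inequality is that for $k\geqslant 2$, $v\!\left(\binom{n}{k}\right)+ke \geqslant v(p)(v_p(n!)-v_p((n-k)!k!)) + ke$, and the standard estimate $v_p(k!)\leqslant (k-1)/(p-1)$ gives $v\!\left(\binom nk\right)\geqslant v(n)-v(p)(k-1)/(p-1)$, hence the $k$-th term has valuation $\geqslant v(n)+e + (k-1)\bigl(e-v(p)/(p-1)\bigr) > v(n)+e$ for $k\geqslant 2$; meanwhile the $k=1$ term has valuation exactly $v(n)+e$, giving $v(x^n-y^n)=v(n)+e=v(x-y)+v(n)$. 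Case \ref{pp} follows from \ref{zerop} in spirit but cleanly by the Frobenius: write $n=p^a m$ with $p\nmid m$; then $x^{p^a}-y^{p^a}=(x-y)^{p^a}$ in characteristic $p$ (using commutativity and that all intermediate binomial coefficients vanish), so $v(x^{p^a}-y^{p^a})=p^a e$; and $x^n-y^n=(x^{p^a})^m-(y^{p^a})^m$ with $v(x^{p^a}-y^{p^a})=p^a e>0$ — but now I need $x^{p^a}$ and $y^{p^a}$ to be units of valuation $0$, which they are, and $p\nmid m$, so by the trivial-$\Z$ case argument (the multiplicities $\binom mk$ for $1\leqslant k\leqslant m$ are nonzero mod $p$ only possibly, but more simply one reduces to showing $v(u^m-1)=v(u-1)$ when $p\nmid m$ and $v(u-1)>0$, which follows from $u^m-1=(u-1)(u^{m-1}+\dots+1)$ and the second factor reducing to $m\cdot 1_R$ modulo the maximal-ideal-like set $\{v>v(u-1)\}$, which is a unit since $p\nmid m$). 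This yields $v(x^n-y^n)=p^a e = e\cdot|n|_p^{-1}$.

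Finally, \ref{unbounded} follows from \ref{zerop} and \ref{pp}: taking $x=z$, $y=1$, the hypotheses $v(z)=0$ (automatic from $v(z-1)>0$ and $v(1)=0$ via the strict-inequality property) and $v(z-1)>0$ are met. In case \ref{pp}, $v(z^{p^a}-1)=p^a v(z-1)\to\infty$. In case \ref{zerop} one cannot apply \ref{zerop}'s formula directly since $v(z-1)$ might not exceed $v(p)/(p-1)$; but pick $a$ large enough that $p^a v(z-1)>v(p)/(p-1)$ — wait, \ref{pp}'s argument is characteristic $p$; in the \ref{zerop} setting I instead iterate: first I would show $v(z^p-1)\geqslant v(z-1)+\min(v(z-1),v(p))$ or similar by direct binomial expansion of $(1+(z-1))^p-1$, obtaining $v(z^p-1)\geqslant \min\bigl(p\,v(z-1),\,v(p)+v(z-1)\bigr)$, and then once $v(z^{p^a}-1)>v(p)/(p-1)$ for some $a$ (which happens because each step increases the valuation by at least $\min(v(z-1),v(p)/(p-1)\text{-ish})>0$ until the threshold is crossed), apply \ref{zerop} to get $v(z^{p^{a}\cdot p^b}-1)=v(z^{p^a}-1)+b\,v(p)\to\infty$. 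The main obstacle I anticipate is case \ref{zerop}: getting the threshold $v(x-y)>v(p)/(p-1)$ to do its job requires the careful estimate $v_p(k!)\leqslant (k-1)/(p-1)$ and a clean bookkeeping of which binomial term dominates, and separately bootstrapping from an arbitrary $z$ with $v(z-1)>0$ up to the threshold in part \ref{unbounded}. The characteristic-$p$ case \ref{pp}, by contrast, is essentially immediate from the Frobenius identity $(x-y)^p = x^p - y^p$ and should cause no trouble.
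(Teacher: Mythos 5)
Your proposal is correct and follows essentially the same route as the paper: the binomial expansion $x^n-y^n=n(x-y)y^{n-1}+\sum_{k\geqslant 2}\binom{n}{k}(x-y)^ky^{n-k}$ with the $k=1$ term dominating, the threshold $v(x-y)>v(p)/(p-1)$ controlling the $k\geqslant 2$ terms in case (b), and the Frobenius identity in case (c); the only organizational differences are that you prove (b) for general $n$ by a direct estimate on $v\bigl(\binom{n}{k}\bigr)$ where the paper reduces to $n=p$ and inducts on $v_p(n)$, and that you bootstrap (vi) iteratively where the paper substitutes $n=p^a$ directly into the same expansion. Your hesitation about the sign of $v(y)$ in (iii)--(iv) is legitimate but applies equally to the paper's own proof, which likewise implicitly uses nonnegativity of the valuation (automatic in the only setting where the proposition is applied, namely \textup{\textbf{(H2)}}).
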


\begin{proof}
\ref{domain} Follows directly from the fact that the valuation $v(x)$ of $x$ is infinite if and only if $x=0$.

\ref{charprime} Follows from \ref{domain}.

\ref{Gident} Follows from the formula $xy-yx=(x-y)x-x(x-y)$.

\ref{noncomm} We have $x^n-y^n=(y+(x-y))^n-y^n=y^n-y^n+z$, where $z$ lies in the two-sided ideal of $R$ generated by $(x-y)$, and hence $v(x^n-y^n)=v(z)\geqslant v(x-y)$.

\ref{comm} Since $x$ and $y$ commute, we have
\begin{equation}\label{liftexpform} 
x^n-y^n=n(x-y)y^{n-1}+\sum_{k=2}^n\binom{n}{k}(x-y)^ky^{n-k}.
\end{equation}
If $v(n)=0$, then the first term has strictly smaller valuation than the second one, and hence $v(x^n-y^n)=v(x-y)$, proving case \ref{zerozero}, as well as cases \ref{zerop} and \ref{pp} for $p{\nmid}n$. It now suffices to consider \ref{zerop} and \ref{pp} for $n=p$; the general result will then follow by induction on $v_p(n)$. For \ref{zerop}, the assumption on $v(x-y)$ implies that
\begin{equation}
v\left(\!\binom{p}{k}\!\right)+(k-1)v(x-y)>v(p)
\end{equation}
for all $2\leqslant k\leqslant p$. This shows that again in (\ref{liftexpform}) the first term has strictly smaller valuation than the second one, which yields $v(x^p-y^p)=v(x-y)+v(p)$. For \ref{pp}, note that $v(x^p-y^p)=v((x-y)^p)=pv(x-y)$. 

\ref{unbounded} Follows from the formula  (\ref{liftexpform}) with $x=z$, $y=1$, and $n$ an arbitrarily large power of $p$.
\end{proof} 

\begin{remark} \textup{[\ref{h2}]}
If $R$ as above is the endomorphism ring of a connected commutative algebraic group $G$ over $K$ and $v$ is a valuation on $\End(G)$ satisfying \textup{[\ref{h2}]}, then Proposition  \ref{prop:valprop}\ref{charprime} can be made slightly more explicit: the characteristic of $R$ will then be either zero or equal to $p=\Char(K)$. In fact, if $\ell\in\Z_{>0}$ is a prime and $v(\ell)>0$, then  the multiplication-by-$\ell$ map is either zero or an inseparable isogeny, and hence its differential, which on the tangent space at $0$ is given by multiplication by $\ell$, is not an isomorphism. Since the tangent space at $0$ is a $K$-vector space, we must have $p=\Char(K)>0$ and $\ell=p$. This also implies that the prime $p$ found in \ref{comm}\ref{zerop} is equal to $\Char(K)$.
\end{remark}

\begin{remark} The assumption that $x$ and $y$ commute is necessary in Proposition \ref{prop:valprop}\ref{comm}\ref{zerop}.  
Consider the quaternion algebra $\mathbf{H}$ generated over $\Q$  by $i,j$ with $i^2=j^2=-1$ and $ij=-ji$, and let $\mc{O}=\Z+\Z i +\Z j +\Z\frac{1+i+j+k}{2}$ be the ring of Hurwitz quaternions, which is a maximal order in $\mathbf{H}$. Consider the valuation $v$ on $\mc{O}$ corresponding to the prime element $1+i \in \mc{O}$. Put $x=i+4j$ and $y=i$. Then $v(x^2-y^2)=v(-16)=8$, but $v(x-y)+v(2)=6$. The assumption that $x$ and $y$ commute is missing from \cite[Lemma 6.2]{BridyBordeaux}, but  the result is only applied for $y=1$, and so other results in that reference are not affected.
\end{remark}

\noindent Recall that $f\colon V\to V$ is a dynamically affine map with associated data as in diagram \eqref{eq:dam}. Assume that $f$ satisfies \ref{h2}. In order to obtain more information about $f$, we will apply Proposition  \ref{prop:valprop} to the ring $R=\RR$ and the valuation $v$ supplied by \ref{h2}.

\begin{lemma} \label{lem:cosep} \textup{[\ref{h2}]}
If $\sigma$ is coseparable, then $\sigma^n-\g$ is a separable isogeny for all $n\in\Z_{>0}$ and $\g\in\G$.
\end{lemma}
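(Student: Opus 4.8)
The plan is to leverage Hypothesis \ref{h2}, which packages the inseparable degree of an isogeny $\tau \in \RR$ as $\deg_{\mathrm{i}}(\tau) = p^{v(\tau)}$, so that $\tau$ is separable precisely when $v(\tau) = 0$. Thus the statement to prove becomes: if $\sigma$ is coseparable, then $v(\sigma^n - \gamma) = 0$ for all $n \in \Z_{>0}$ and $\gamma \in \Gamma$. Both $\sigma^n - \gamma$ and $\sigma^n - 1$ are isogenies (by Lemma \ref{lem:silvertwist}\ref{state:dynamicallyconfined}, applied with $\gamma$ or with $\gamma = 1$), hence nonzero elements of $\RR$, so their valuations are finite nonnegative integers.

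The key step is to bound $v(\sigma^n - \gamma)$ above by $v(\sigma^{dn} - 1)$ for a suitable $d$, mimicking the argument in the proof of Lemma \ref{lem:silvertwist}\ref{state:dynamicallyconfined}. Concretely, with $\beta := \alpha^n$ (the automorphism from Lemma \ref{lem:silvertwist}\ref{state:autotwist}), one has the telescoping identity
\begin{equation*}
\sigma^{dn} - \beta^{d-1}(\gamma)\cdots\beta(\gamma)\gamma = \sum_{k=0}^{d-1} \beta^{k}(\gamma)\cdots\beta(\gamma) \cdot \bigl(\sigma^{n} - \gamma\bigr) \cdot \sigma^{(d-1-k)n},
\end{equation*}
which exhibits $\sigma^{dn} - \beta^{d-1}(\gamma)\cdots\gamma$ as a left-and-right $\RR$-multiple of $\sigma^n - \gamma$; since every element of $\Gamma$ is a unit in $\RR$ with valuation $0$ and $v(\sigma) = 0$, the valuation properties give $v\bigl(\sigma^{dn} - \beta^{d-1}(\gamma)\cdots\gamma\bigr) \geqslant v(\sigma^n - \gamma)$. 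Because $\beta$ is injective and $\Gamma$ is finite, there is $d \in \Z_{>0}$ with $\beta^{d-1}(\gamma)\cdots\beta(\gamma)\gamma = 1$, and for that $d$ we get $v(\sigma^{dn} - 1) \geqslant v(\sigma^n - \gamma)$. Coseparability of $\sigma$ means $\sigma^{dn} - 1$ is a separable isogeny, so $v(\sigma^{dn} - 1) = 0$ by \ref{h2}; hence $v(\sigma^n - \gamma) = 0$, i.e.\ $\sigma^n - \gamma$ is separable.

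The main obstacle is bookkeeping the noncommutativity: $\sigma$ and $\gamma$ need not commute, so one must be careful to use only the order-independent valuation axioms $v(xy) = v(x) + v(y)$ and $v(x + y) \geqslant \min\{v(x), v(y)\}$ together with the twisting relation $\sigma\gamma = \alpha(\gamma)\sigma$ from Lemma \ref{lem:silvertwist}\ref{state:autotwist} to move all $\sigma$'s past the $\Gamma$-factors. An alternative, perhaps cleaner, phrasing that sidesteps writing the explicit telescoping sum: argue that the two-sided ideal generated by $\sigma^n - \gamma$ in $\RR$ contains $\sigma^{dn} - \beta^{d-1}(\gamma)\cdots\gamma = \sigma^{dn} - 1$, exactly as in the proof of Lemma \ref{lem:silvertwist}\ref{state:dynamicallyconfined} where the analogous statement was made at the level of kernels; then conclude $v(\sigma^{dn}-1) \geqslant v(\sigma^n - \gamma)$ since any element of the ideal generated by $t$ has valuation $\geqslant v(t)$ (this is the ring-theoretic shadow of the kernel inclusion, and is precisely the estimate used implicitly in Proposition \ref{prop:valprop}\ref{noncomm}).
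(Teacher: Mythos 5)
Your strategy is the paper's strategy: translate separability into $v(\sigma^n-\g)=0$ via \ref{h2}, bound $v(\sigma^n-\g)$ above by $v(\sigma^{dn}-1)$ for a suitable $d$ using the fact that $\sigma^{dn}-(\text{product of twisted conjugates of }\g)$ lies in the two-sided ideal generated by $\sigma^n-\g$ (so its valuation is at least $v(\sigma^n-\g)$, since $v$ is nonnegative on $\RR$), and then invoke coseparability. The paper packages this more economically: it simply takes $d=|\G|$ and applies Proposition \ref{prop:valprop}\ref{noncomm} with $x=\sigma^n$, $y=\g$, giving $v(\sigma^n-\g)\leqslant v(\sigma^{|\G|n}-\g^{|\G|})=v(\sigma^{|\G|n}-1)=0$; the proof of Proposition \ref{prop:valprop}\ref{noncomm} is exactly your "ideal-theoretic shadow" remark, and no appeal to the twisting automorphism $\alpha$ is needed at all.

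One concrete slip: your displayed telescoping identity is false in the noncommutative setting. Already for $d=2$ its right-hand side equals $\sigma^{2n}-\g\sigma^n+\sigma^n\g-\beta(\g)\g$, which agrees with $\sigma^{2n}-\beta(\g)\g$ only if $\g$ and $\sigma^n$ commute; the correct decomposition uses left multiples only, e.g.\ $\sigma^{2n}-\beta(\g)\g=\sigma^n(\sigma^n-\g)+\beta(\g)(\sigma^n-\g)$, and inductively $\sigma^{(k+1)n}-c_{k+1}=\sigma^n(\sigma^{kn}-c_k)+\beta^{k}(\g)\cdots\beta(\g)(\sigma^n-\g)$ with $c_k=\beta^{k-1}(\g)\cdots\g$. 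This does not sink the proof, because your fallback formulation (membership in the two-sided ideal generated by $\sigma^n-\g$, hence valuation $\geqslant v(\sigma^n-\g)$) is correct and suffices; but as written the explicit identity should be repaired or replaced by the ideal argument, or better yet by a direct citation of Proposition \ref{prop:valprop}\ref{noncomm}.
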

\begin{proof}
Let $\g\in\G$ and $n\in\Z_{>0}$. By Lemma \ref{lem:silvertwist}\ref{state:dynamicallyconfined}, $\sigma^n-\g$ is an isogeny, so it remains to show that it is separable. Applying Proposition \ref{prop:valprop}\ref{noncomm}, we see that 
\begin{equation*}
v(\sigma^n-\g)\leqslant v(\sigma^{|\G|n}-\g^{|\G|})=v(\sigma^{|\G|n}-1)=0,
\end{equation*}
where in the last equality we use that $\sigma$ is coseparable.
\end{proof}

\begin{proposition} \label{prop:cosepcase} \textup{[\ref{h1}--\ref{h3}]}
If $f$ is coseparable, then $\zeta_f(z)$ is rational and $\zeta_f^*(z)$ is root-rational.
\end{proposition}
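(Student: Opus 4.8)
The plan is to reduce $\zeta_f(z)$ (and its tame variant) entirely to kernels of isogenies on $G$ via Lemma \ref{lem:key}, and then use coseparability to make all the inseparable-degree contributions trivial, leaving only linear-recurrent pieces that assemble into rational/root-rational zeta functions.

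\medskip

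\noindent\textbf{Step 1: Decompose $f_n$.} By Lemma \ref{lem:key},
$$f_n=(f|_C)_n+\frac{1}{|\Gamma|}\sum_{\gamma\in\Gamma}\#\ker(\sigma^n-\gamma).$$
By \ref{h1}, the generating function $\exp\bigl(\sum_n (f|_C)_n z^n/n\bigr)$ is rational, so (using Remark \ref{rem:zetaMV}-style multiplicativity of zeta functions under addition of the exponent sequences, i.e.\ $\zeta$ of a sum of sequences is the product of the $\zeta$'s) it suffices to show that
$$\exp\left(\frac{1}{|\Gamma|}\sum_{n\geqslant 1}\sum_{\gamma\in\Gamma}\#\ker(\sigma^n-\gamma)\,\frac{z^n}{n}\right)$$
is rational, and its tame analogue root-rational.

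\medskip

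\noindent\textbf{Step 2: Kill the inseparable degree.} By Lemma \ref{lem:cosep}, coseparability of $\sigma$ (which is exactly what makes $f$ coseparable) forces $\sigma^n-\gamma$ to be a \emph{separable} isogeny for every $n\geqslant 1$ and every $\gamma\in\Gamma$. Hence $\deg_{\mathrm i}(\sigma^n-\gamma)=1$, and \eqref{eq:kerdegs} gives
$$\#\ker(\sigma^n-\gamma)=\deg(\sigma^n-\gamma).$$
So the problematic sequence (b) above is identically $1$, and we only need to understand the degree sequences $\bigl(\deg(\sigma^n-\gamma)\bigr)_n$ for $\gamma\in\Gamma$.

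\medskip

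\noindent\textbf{Step 3: Apply \ref{h3} with $m=0$.} In Notation \ref{not:sm}, since $\sigma$ is coseparable we have $v(\sigma^n-1)=0$ for all $n$ (as in the proof of Lemma \ref{lem:cosep}); taking $m=0$ we may take $s_0=1$ and $\gamma_0=1$, and $\Gamma_0=\Gamma$. Then \ref{h3} for $m=0$ states precisely that
$$\exp\left(\frac{1}{|\Gamma|}\sum_{n\geqslant 1}\sum_{\gamma\in\Gamma}\deg(\sigma^{n}-\gamma)\,\frac{z^n}{n}\right)\in\C(z).$$
Combining with Steps 1 and 2, $\zeta_f(z)$ is a product of rational functions, hence rational. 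For the tame statement, apply the recurrences recalled in the ``Recurrences'' subsection: formula \eqref{eqn:finaltame} shows that if a zeta function $F(z)$ is rational then $F^*(z)$ is root-rational; since $\zeta_f(z)$ is rational, $\zeta_f^*(z)$ is root-rational. (Alternatively, invoke Theorem \ref{thm:tame}, which only needs \ref{h1}--\ref{h3}, noting that in the coseparable case $s_m$ fails to exist for $m>0$ by the forthcoming Lemma \ref{lem:sm}, so the only nontrivial instance of \ref{h3} is $m=0$.)

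\medskip

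\noindent\textbf{Main obstacle.} The genuine content here is entirely \ref{h3} at $m=0$: that the averaged degree sequence $\sum_\gamma\deg(\sigma^n-\gamma)$ assembles into a rational zeta function, i.e.\ that each $\deg(\sigma^n-\gamma)$ is given by a linear recurrence $\sum_i m_i\lambda_i^n$ with \emph{integer} multiplicities independent of $n$ (so that the zeta function, not just the generating function, is rational). Everything else is bookkeeping: reducing via Lemma \ref{lem:key}, observing that coseparability trivialises the inseparable degree via Lemma \ref{lem:cosep}, and translating rationality of $\zeta_f$ to root-rationality of $\zeta_f^*$ via \eqref{eqn:finaltame}. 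Thus the proof is short: it is essentially the observation that under coseparability hypotheses \ref{h2} and the $m=0$ case of \ref{h3} already deliver the conclusion, with no natural boundary phenomena (which is why \ref{h4} is not needed).
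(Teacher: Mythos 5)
Your proof is correct and follows essentially the same route as the paper: Lemma \ref{lem:key} plus \ref{h1} for the finite part, Lemma \ref{lem:cosep} to replace $\#\ker(\sigma^n-\gamma)$ by $\deg(\sigma^n-\gamma)$, \ref{h3} at $m=0$ (where $s_0=1$, $\Gamma_0=\Gamma$) for rationality, and the Section 2 observation that rationality of $\zeta_f$ forces root-rationality of $\zeta_f^*$. The only caveat is your parenthetical alternative of invoking Theorem \ref{thm:tame}, which would be circular since the paper's proof of that theorem disposes of the coseparable case precisely by citing this proposition; but as your main argument does not rely on it, this is harmless.
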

\begin{proof}
If $f$ is coseparable, then by Lemma \ref{lem:cosep}, $\#\ker(\sigma^n-\g)=\deg(\sigma^n-\g)$ for all $n$ and $\g$. The desired result for $\zeta_f(z)$ then follows by applying Lemma \ref{lem:key} together with \ref{h1} and \ref{h3} with $m=0$. By the general rationality conditions in Section 2, this implies that $\zeta_{f^p}(z)$ is rational as well, and hence the result for $\zeta_f^*(z)$ follows from \eqref{eq:tameexpand}.
\end{proof}

\begin{remark}
Proposition \ref{prop:cosepcase} is false if we drop the assumption of the hypotheses.
In fact, if \ref{h2} and \ref{h3} do not hold, then $\zeta_f(z)$ may even have a natural boundary along its circle of convergence (see Example \ref{example:cosepnotrootrat} below).
\end{remark}

\begin{lemma}\label{lem:valsigma0} \textup{[\ref{h2}]}
If $f$ is not coseparable, then $v(\sigma)=0$.
\end{lemma}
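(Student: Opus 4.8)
The plan is to prove the contrapositive: assuming $v(\sigma)\neq 0$, I will show that $\sigma$, and hence $f$, is coseparable. First I would record two elementary consequences of \ref{h2}. Since every nonzero element of $\RR$ is an isogeny and the inseparable degree of an isogeny is a nonnegative power of $p$, the valuation $v$ is nonnegative on $\RR$; in particular $v(\sigma)\neq 0$ means $v(\sigma)>0$. Likewise the identity isogeny satisfies $\deg_{\mathrm{i}}(1)=1=p^0$, so $v(1)=0$ (and similarly $v(-1)=0$, since $2v(-1)=v(1)=0$).

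Next, fix an arbitrary $n\in\Z_{>0}$. Multiplicativity of $v$ gives $v(\sigma^n)=n\,v(\sigma)>0$; in particular $\sigma^n\neq 1$, so $\sigma^n-1$ is a nonzero element of $\RR$ and therefore an isogeny (alternatively, invoke Lemma \ref{lem:silvertwist}\ref{state:dynamicallyconfined} with $\g=1$). Since $v(\sigma^n)>0=v(-1)$, the two valuations differ, and hence $v(\sigma^n-1)=\min\{v(\sigma^n),v(-1)\}=0$, whence $\deg_{\mathrm{i}}(\sigma^n-1)=p^0=1$. Thus $\sigma^n-1$ is a separable isogeny for every $n$, i.e.\ $\sigma$ is coseparable, contradicting the hypothesis that $f$ is not coseparable; therefore $v(\sigma)=0$.

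I do not expect any real obstacle here: the argument is essentially a one-line valuation computation once the definitions are unwound. The only points that require (trivial) care are that $v$ is nonnegative and that $v(1)=0$ — both immediate from the defining property $\deg_{\mathrm{i}}(\tau)=p^{v(\tau)}$ of \ref{h2} — together with the remark that $\sigma^n-1$ is genuinely an isogeny, so that this property may legitimately be applied to it.
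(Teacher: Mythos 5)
Your proof is correct and is essentially the paper's argument: the paper also observes that $v(\sigma)>0$ would force $v(\sigma^n-1)=\min\{v(\sigma^n),v(1)\}=0$ for all $n$, making $\sigma$ coseparable. You have merely spelled out the (correct) preliminary facts that $v\geqslant 0$ on $\RR$ and $v(1)=0$, which the paper leaves implicit.
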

\begin{proof}
If $v(\sigma)>0$, then $v(\sigma^n-1)=0$ for all $n$, contradicting the assumption that $\sigma$ is not coseparable.
\end{proof}

\begin{lemma}\label{lem:sigmagammacommute} \textup{[\ref{h2}]}
Suppose that $n\in\Z_{>0}$ and $\g\in\G$ are such that $v(\sigma^n-\g)\geqslant N$. Then $\sigma^n$ and $\g$ commute.
\end{lemma}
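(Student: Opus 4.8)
The goal is to show that if $v(\sigma^n - \gamma) \geqslant N$, then $\sigma^n$ and $\gamma$ commute in the ring $\RR$. Recall from Lemma \ref{lem:silvertwist}\ref{state:autotwist} that conjugation by $\sigma$ acts on $\G$ via the group automorphism $\alpha$, so $\sigma \g = \alpha(\g) \sigma$; iterating, $\sigma^n \g = \alpha^n(\g) \sigma^n$ for every $\g \in \G$. Hence $\sigma^n$ and $\g$ commute precisely when $\alpha^n(\g) = \g$. So the claim is equivalent to showing $\alpha^n$ fixes $\g$.

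First I would use Proposition \ref{prop:valprop}\ref{Gident}: applied with $x = \sigma^n$ and $y = \gamma$, it gives
\begin{equation*}
v(\sigma^n \gamma - \gamma \sigma^n) \geqslant v(\sigma^n - \gamma) \geqslant N.
\end{equation*}
Rewriting the left-hand side via $\sigma^n \gamma = \alpha^n(\gamma)\sigma^n$, this reads $v\big((\alpha^n(\gamma) - \gamma)\sigma^n\big) \geqslant N$. Since $v(\sigma) = 0$ by Lemma \ref{lem:valsigma0} (we are in the non-coseparable case — note that if $f$ is coseparable the hypothesis $v(\sigma^n-\gamma) \geqslant N \geqslant 1$ cannot even be satisfied by Lemma \ref{lem:cosep}, so we may assume $f$ is not coseparable), multiplicativity of $v$ gives $v(\sigma^n) = 0$, and therefore
\begin{equation*}
v\big(\alpha^n(\gamma) - \gamma\big) \geqslant N.
\end{equation*}

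Now I would invoke the definition of $N$ from Notation \ref{not:sm}: $N = \max\{v(\g - 1) \mid \g \in \G,\ \g \neq 1\} + 1$, so any element $\delta \in \G$ with $\delta \neq 1$ has $v(\delta - 1) \leqslant N - 1 < N$. Write $\delta := \alpha^n(\gamma)\gamma^{-1} \in \G$ (using that $\G$ is a group and $\alpha^n$ maps $\G$ to $\G$). Then $\alpha^n(\gamma) - \gamma = (\delta - 1)\gamma$, and since $\gamma$ is a unit in $\RR$ with $v(\gamma) = 0$ — indeed $\gamma$ is an automorphism of $G$, hence $\deg_{\mathrm{i}}(\gamma) = 1$ so $v(\gamma) = 0$ by \ref{h2} — we get $v(\delta - 1) = v(\alpha^n(\gamma) - \gamma) \geqslant N$. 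Combined with the bound $v(\delta - 1) < N$ that holds for every nontrivial element of $\G$, this forces $\delta = 1$, i.e. $\alpha^n(\gamma) = \gamma$, which is exactly the assertion that $\sigma^n$ and $\gamma$ commute.

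The main obstacle, such as it is, is bookkeeping: one must be careful that $\alpha^n(\gamma)\gamma^{-1}$ genuinely lands in $\G$ (it does, since $\alpha$ is a group automorphism of $\G$ by Lemma \ref{lem:silvertwist}\ref{state:autotwist}) and that the valuation of the automorphism $\gamma$ and of $\sigma^n$ vanish so that these factors can be cancelled inside $v$; both points are handled by \ref{h2} together with Lemma \ref{lem:valsigma0}. No deep estimate is needed beyond the conjugation identity $v(xy - yx) \geqslant v(x-y)$ of Proposition \ref{prop:valprop}\ref{Gident}.
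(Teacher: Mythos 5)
Your proof is correct and follows essentially the same route as the paper: apply $v(xy-yx)\geqslant v(x-y)$ with $x=\sigma^n$, $y=\gamma$, rewrite $\sigma^n\gamma-\gamma\sigma^n=(\alpha^n(\gamma)-\gamma)\sigma^n$, cancel $v(\sigma^n)=0$, and conclude $\alpha^n(\gamma)=\gamma$ from the definition of $N$. The only (immaterial) difference is that the paper deduces $v(\sigma)=0$ directly from $v(\sigma^n-\gamma)\geqslant N>0$ and $v(\gamma)=0$ via the ultrametric inequality, whereas you route through Lemmas \ref{lem:cosep} and \ref{lem:valsigma0}.
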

\begin{proof}
Since $N>0$ and $v(\gamma)=0$, we have $v(\sigma)=0$. Let $\alpha\in\Aut(\G)$ as in Lemma \ref{lem:silvertwist}\ref{state:autotwist}, so that $\sigma\g=\alpha(\g)\sigma$. It follows that
\begin{equation*}
N\leqslant v(\sigma^n-\g)\leqslant v(\sigma^n\g-\g\sigma^n)=v((\alpha^n(\g)-\g)\sigma^n)=v(\alpha^n(\g)-\g).
\end{equation*}
We conclude that $\alpha^n(\g)=\g$, and hence $\sigma^n \gamma = \alpha^n(\gamma)\sigma^n=\gamma \sigma^n$.
\end{proof}

We will now prove the announced result on the existence of the numbers $s_m$ defined in Notation \ref{not:sm}.

\begin{lemma}\label{lem:sm} \textup{[\ref{h2}]}
\begin{enumerate}[\textup{(}i\textup{)}]
\item\label{lem:sma} If $f$ is coseparable, then none of the numbers $s_m$ exist for $m>0$.
\item\label{lem:smb} If $f$ is not coseparable, then all of the numbers $s_m$ exist.
\end{enumerate}
\end{lemma}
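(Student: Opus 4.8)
The plan is to treat the two parts separately. For \ref{lem:sma}, the statement is essentially immediate from Lemma~\ref{lem:cosep}: if $f$ is coseparable, that lemma gives that $\sigma^n-\g$ is a \emph{separable} isogeny for every $n\in\Z_{>0}$ and $\g\in\G$, hence $\deg_{\mathrm{i}}(\sigma^n-\g)=1$ and therefore $v(\sigma^n-\g)=0$ by the defining property of $v$ in \ref{h2}. So for any fixed $m>0$ no pair $(n,\g)$ satisfies $v(\sigma^n-\g)\geqslant m$, which is precisely the assertion that $s_m$ does not exist.

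For \ref{lem:smb}, suppose $f$ is not coseparable; the idea is a bootstrapping argument that turns a single witness of inseparability into witnesses of arbitrarily large inseparable degree. By Lemma~\ref{lem:valsigma0} we have $v(\sigma)=0$, and unwinding the definition of coseparability there is some $n_0\in\Z_{>0}$ for which the isogeny $\sigma^{n_0}-1$ (it is an isogeny by Lemma~\ref{lem:silvertwist}\ref{state:dynamicallyconfined}, with $\g=1$) is inseparable, i.e.\ $v(\sigma^{n_0}-1)>0$. Put $z:=\sigma^{n_0}\in\RR$, so that $v(z)=0$ and $v(z-1)>0$. It then suffices to show that $v(z^k-1)=v(\sigma^{n_0 k}-1)$ is unbounded as $k$ ranges over $\Z_{>0}$: granting this, for any $m\in\Z_{\geqslant 0}$ we may choose $k$ with $v(\sigma^{n_0 k}-1)\geqslant m$ and take $\g_m=1$, which exhibits $s_m$. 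This unboundedness is exactly the conclusion of Proposition~\ref{prop:valprop}\ref{unbounded}, \emph{provided} we are in case \ref{zerop} or \ref{pp} of that proposition.

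The step I expect to be the only genuine obstacle is checking that we are indeed in one of those two cases, i.e.\ excluding the possibility $\mathrm{char}(\RR)=0$ with $v$ trivial on $\Z\setminus\{0\}$ (case \ref{zerozero}), in which $v(z^k-1)$ would be constant and the bootstrap would fail. To rule it out: if $\dim G=0$ then $f$ is coseparable and \ref{lem:smb} is vacuous, so assume $\dim G>0$; by the remark following Proposition~\ref{prop:valprop} we know $\mathrm{char}(\RR)\in\{0,p\}$; and if $\mathrm{char}(\RR)=0$, then multiplication by $p$ is a nonzero element of $\RR$, hence an isogeny, whose differential at the origin is multiplication by $p$ on the $K$-vector space $T_0G$, which vanishes since $\Char K=p$ — so $[p]$ is an inseparable isogeny, $v(p)>0$, and we are in case \ref{zerop}. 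With the degenerate case excluded, Proposition~\ref{prop:valprop}\ref{unbounded} applies with $z=\sigma^{n_0}$ and concludes the proof of \ref{lem:smb}.
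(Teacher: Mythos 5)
Your proof is correct and follows essentially the same route as the paper: part \ref{lem:sma} via Lemma \ref{lem:cosep}, and part \ref{lem:smb} by producing some $n_0$ with $v(\sigma^{n_0}-1)>0$ and invoking Proposition \ref{prop:valprop}\ref{unbounded}. Your explicit exclusion of case \ref{zerozero} (via the inseparability of $[p]$ when $\Char(\RR)=0$) is a detail the paper leaves implicit --- it is justified by the remark following Proposition \ref{prop:valprop} --- and your argument for it is sound.
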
 
\begin{proof}
\ref{lem:sma} If $f$ is coseparable, then by Lemma \ref{lem:cosep} all the maps $\sigma^n-\gamma$ for $n\in \Z_{>0}$ and $\gamma\in \Gamma$ are separable isogenies, and hence $v(\sigma^n-\gamma)=0$. Hence $s_m$ do not exist for $m>0$.

\ref{lem:smb} Since $f$ is not coseparable, there is some $n\in\Z_{>0}$ such that $v(\sigma^n-1)>0$, and hence by Proposition \ref{prop:valprop}.\ref{unbounded} the values of $v(\sigma^n-1)$ can be arbitrarily large. This proves the existence of $s_m$ for all $m$.
\end{proof}

\begin{lemma}\label{lem:valsubseq} \textup{[\ref{h2}]}
Suppose that $f$ is not coseparable. Let $m\in\Z_{\geqslant 0}$. Then:
\begin{enumerate}[\textup{(}i\textup{)}]
\item \label{aaa} The set $$S_m:=\{n\in\Z_{>0}\mid  v(\sigma^n-\g)\geqslant m \mbox{ for some } \g\in\G\}$$ is equal to $s_m\Z_{>0}$.
\item \label{bbb} For every $n\in\Z_{>0}$, $\{\g\in\G\mid v(\sigma^{s_mn}-\g)\geqslant m\}=\G_m\g_m^n$.
\end{enumerate}
\end{lemma}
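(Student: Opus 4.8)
The plan is to analyze the sets $S_m$ and the possible $\gamma$'s simultaneously, using the valuation arithmetic from Proposition \ref{prop:valprop} together with the automorphism $\alpha$ from Lemma \ref{lem:silvertwist}\ref{state:autotwist}. First I would establish part \ref{aaa} by showing $S_m$ is closed under addition: if $v(\sigma^a - \gamma) \geqslant m$ and $v(\sigma^b - \delta) \geqslant m$, then writing $\sigma^{a+b} = \sigma^a \sigma^b$ and telescoping, $\sigma^{a+b} - \gamma\alpha^{-a}(\delta) = \sigma^a(\sigma^b - \delta) + (\sigma^a - \gamma)\alpha^{-a}(\delta)$ (using $\sigma^a \delta = \alpha^a(\delta)\sigma^a$ rearranged appropriately); since $v(\sigma) = 0$ by Lemma \ref{lem:valsigma0} and $v$ is invariant under multiplication by units of $\RR$ of valuation $0$ (in particular by elements of $\Gamma$, which have valuation $0$ since they are automorphisms hence separable), both terms have valuation $\geqslant m$, so $a+b \in S_m$. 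Hence $S_m$ is a numerical sub-semigroup of $\Z_{>0}$; but one checks directly that $S_m$ is also closed under the operation $n \mapsto n - n'$ when $n > n'$ are both in $S_m$ (same telescoping identity, solving for $\sigma^{n-n'}-(\ldots)$ and using that $\sigma^{n'}-\delta'$ is an isogeny so multiplication by it does not change that the difference has valuation $\geqslant m$ — more carefully, one multiplies through and uses that $v$ is additive). Thus $S_m$ is closed under subtraction where defined, so $S_m = s_m \Z_{>0}$ with $s_m = \min S_m$, which is exactly the defining property of $s_m$.

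For part \ref{bbb}, fix $n \in \Z_{>0}$. By part \ref{aaa}, $s_m n \in S_m$, so there exists some $\delta \in \Gamma$ with $v(\sigma^{s_m n} - \delta) \geqslant m$; in particular (taking $n = 1$) $v(\sigma^{s_m} - \gamma_m) \geqslant m$ by the very definition of $\gamma_m$. Now I claim any $\gamma$ with $v(\sigma^{s_m n} - \gamma) \geqslant m$ lies in $\Gamma_m \gamma_m^n$. Using the telescoping identity $n$ times, $v(\sigma^{s_m n} - \gamma_m^{(n)}) \geqslant m$ where $\gamma_m^{(n)} := \alpha^{(n-1)s_m}(\gamma_m)\cdots\alpha^{s_m}(\gamma_m)\gamma_m$ is the "twisted power". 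Then from $v(\sigma^{s_m n} - \gamma) \geqslant m$ and $v(\sigma^{s_m n} - \gamma_m^{(n)}) \geqslant m$ we get $v(\gamma - \gamma_m^{(n)}) \geqslant m$ by the ultrametric inequality, i.e. $v(\gamma (\gamma_m^{(n)})^{-1} - 1) \geqslant m$ (multiplying by the unit $(\gamma_m^{(n)})^{-1}$ of valuation $0$), so $\gamma(\gamma_m^{(n)})^{-1} \in \Gamma_m$, giving $\gamma \in \Gamma_m \gamma_m^{(n)}$. It remains to identify $\Gamma_m \gamma_m^{(n)}$ with $\Gamma_m \gamma_m^n$, i.e. to show $\gamma_m^{(n)} \in \Gamma_m \gamma_m^n$. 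This follows because $v(\sigma^{s_m} - \gamma_m) \geqslant m$ forces, via Lemma \ref{lem:sigmagammacommute} (when $m \geqslant N$) or a direct argument via $v(\sigma^{s_m}\gamma_m - \gamma_m\sigma^{s_m}) \geqslant m$ combined with $v(\sigma^{s_m}\gamma_m - \gamma_m \sigma^{s_m}) = v(\alpha^{s_m}(\gamma_m) - \gamma_m)$, that $v(\alpha^{s_m}(\gamma_m) - \gamma_m) \geqslant m$, hence $\alpha^{s_m}(\gamma_m) \equiv \gamma_m \pmod{\Gamma_m}$; inductively $\alpha^{j s_m}(\gamma_m) \equiv \gamma_m \pmod{\Gamma_m}$ for all $j$, so $\gamma_m^{(n)} \equiv \gamma_m^n \pmod{\Gamma_m}$ (using that $\Gamma_m$ is normal in $\Gamma$). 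Conversely, one checks every element of $\Gamma_m \gamma_m^n$ does satisfy the valuation bound, again by the ultrametric inequality and $v(\sigma^{s_m n} - \gamma_m^{(n)}) \geqslant m$ together with $\gamma_m^{(n)}\gamma_m^{-n} \in \Gamma_m$.

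The main obstacle I anticipate is the bookkeeping around the twist by $\alpha$: because $\sigma$ and $\Gamma$ need not commute, the "power" $\sigma^{s_m n} - \gamma^n$ is naturally compared not to $\gamma^n$ but to the twisted product $\gamma_m^{(n)}$, and one must repeatedly invoke that $v(\alpha(\tau)) = v(\tau)$ (immediate since $\alpha(\tau)$ differs from $\tau$ by conjugation and $v$ is conjugation-invariant on a valued ring, or alternatively since conjugation by $\sigma$ preserves inseparable degree) to move these twists around without changing valuations. Once one is disciplined about always reducing modulo $\Gamma_m$ and about the fact that all elements of $\Gamma$ and all $\sigma^k$ are units of valuation $0$ in the relevant sense, the two parts fall out of the single telescoping identity plus the ultrametric inequality. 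I would also note that part \ref{bbb} with $n=1$ immediately gives that any two valid choices of $\gamma_m$ differ by an element of $\Gamma_m$, which is the independence statement promised in the remark following Hypothesis \ref{h3}.
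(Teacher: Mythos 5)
Your argument is correct in substance and rests on the same two pillars as the paper's proof: the ultrametric inequality combined with the fact that $\sigma$ and all elements of $\G$ have valuation zero (Lemma \ref{lem:valsigma0} and separability of automorphisms), and a telescoping identity plus division with remainder to pin $S_m$ down to $s_m\Z_{>0}$. The one structural difference is that you compare $\sigma^{s_mn}$ with the twisted product $\g_m^{(n)}=\alpha^{(n-1)s_m}(\g_m)\cdots\alpha^{s_m}(\g_m)\g_m$ and then spend a paragraph untwisting it back to $\g_m^n$; the paper avoids this detour entirely by invoking Proposition \ref{prop:valprop}\ref{noncomm}, which gives $v(\sigma^{s_mn}-\g_m^n)\geqslant v(\sigma^{s_m}-\g_m)\geqslant m$ for the \emph{honest} power $\g_m^n$ with no commutativity assumption (its proof --- expand $(y+(x-y))^n-y^n$ and observe that everything lands in the two-sided ideal generated by $x-y$ --- is exactly the non-commutative bookkeeping you anticipated, done once and for all). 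With that in hand, part \ref{bbb} becomes a two-line chain of ultrametric equivalences $v(\sigma^{s_mn}-\g)\geqslant m\iff v(\g-\g_m^n)\geqslant m\iff\g\g_m^{-n}\in\G_m$, and the inclusion $s_m\Z_{>0}\subseteq S_m$ in part \ref{aaa} is immediate; the reverse inclusion is your subtraction step, phrased in the paper as $n=ds_m+r$ with $0<r<s_m$ forcing the contradiction $v(\sigma^r-\g\g_m^{-d})\geqslant m$. Two small points to tidy in your version: the displayed telescoping identity is off, since $\sigma^a\alpha^{-a}(\delta)=\delta\sigma^a\neq\sigma^a\delta$; the correct form is $\sigma^{a+b}-\alpha^a(\delta)\g=\sigma^a(\sigma^b-\delta)+\alpha^a(\delta)(\sigma^a-\g)$. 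And the induction giving $\alpha^{js_m}(\g_m)\equiv\g_m\pmod{\G_m}$ needs the additional observation that $\alpha$ preserves $\G_m$, which follows from $(\alpha(\tau)-1)\sigma=\sigma(\tau-1)$ and $v(\sigma)=0$. Neither issue affects the correctness of the overall argument, but both evaporate if you use Proposition \ref{prop:valprop}\ref{noncomm} from the start.
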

\begin{proof}
\ref{aaa} By Proposition \ref{prop:valprop}\ref{noncomm}, we have $v(\sigma^{s_mn}-\g_m^n)\geqslant v(\sigma^{s_m}-\g_m)\geqslant m$, so $S_m\supseteq s_m\Z_{>0}$. Now suppose to the contrary that there exists an $n\in S_m- s_m\Z_{>0}$. Then there exists a $\g\in\G$ such that $v(\sigma^n-\g)\geqslant m$, and we can write $n=ds_m+r$ for $0<r<s_m$. We obtain 
\begin{equation*}
m\leqslant v(\sigma^n-\g)=v(\sigma^r(\sigma^{ds_m}-\g_m^d)+(\sigma^r-\g\g_m^{-d})\g_m^d).
\end{equation*}
This leads to a contradiction since $v(\sigma^r(\sigma^{ds_m}-\g_m^d))\geqslant m$ and $v((\sigma^r-\g\g_m^{-d})\g_m^d)<m$.

\ref{bbb} We know that $v(\sigma^{s_mn}-\g_m^n)\geqslant m$, so for any $\g\in\G$ we have the equivalence
\begin{align*}
v(\sigma^{s_mn}-\g)\geqslant m & \iff v(\g-\g_m^n)\geqslant m \\ & \iff v(\g\g_m^{-n}-1)\geqslant m\iff \g\in\G_m\g_m^n. \qedhere
\end{align*}
\end{proof}

\subsection*{Preliminary results on natural boundaries}

\begin{lemma}\label{lem:NBBasic}
Let $h,\beta \in\R_{>0}$ with $\beta<1$. Then the power series
\begin{equation}
G_h(z):=\sum_{n\geqslant 1}|n|_p^hz^n,\qquad H_\beta(z):=\sum_{n\geqslant 1}\beta^{|n|_p^{-1}}z^n
\end{equation}
have radius of convergence $1$ and define holomorphic functions that have a natural boundary along the unit circle.
\end{lemma}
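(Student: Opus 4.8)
The plan is to establish natural boundaries for both series by the classical technique of showing that a dense set of roots of unity on the unit circle are singularities, using the ``Mahler-style'' arithmetic self-similarity that the $p$-adic absolute value $|n|_p$ enjoys under the substitution $n \mapsto pn$. First I would check radii of convergence: since $|n|_p \le 1$ we have $|n|_p^h \le 1$ and $\beta^{|n|_p^{-1}} \le \beta < 1$, so both series are dominated by $\sum z^n$ and converge for $|z| < 1$; and since $|n|_p = 1$ for all $n$ coprime to $p$ (a set of positive density), neither sequence of coefficients tends to $0$, so the radius of convergence is exactly $1$ and the unit circle is at least a singularity barrier in the weak sense. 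The substance is to show there is no holomorphic continuation across any arc.

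The key structural identity I would exploit is that splitting $n$ according to whether $p \mid n$ gives a functional equation relating $G_h(z)$ (resp.\ $H_\beta(z)$) to itself evaluated at $z^p$. Concretely, for $G_h$: writing $n = pm$ when $p \mid n$ and using $|pm|_p^h = p^{-h}|m|_p^h$, one gets
\begin{equation}
G_h(z) = \sum_{p \nmid n} z^n + p^{-h} G_h(z^p),
\end{equation}
and the first sum $\sum_{p\nmid n} z^n = \frac{z}{1-z} - \frac{z^p}{1-z^p}$ is a rational function whose only singularities on $|z|=1$ are at $p$-th roots of unity. Similarly for $H_\beta$, with $\beta^{|pm|_p^{-1}} = \beta^{p|m|_p^{-1}} = (\beta^p)^{|m|_p^{-1}}$, one gets $H_\beta(z) = \frac{z}{1-z} - \frac{z^p}{1-z^p} + H_{\beta^p}(z^p)$, which is almost the same recursion except the parameter $\beta$ is raised to the $p$-th power at each step — still with $\beta^{p^k} < 1$, so the recursion stays in the same family. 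Iterating $k$ times expresses $G_h$ (resp.\ $H_\beta$) as an explicit rational function plus a scalar multiple of $G_h(z^{p^k})$ (resp.\ $H_{\beta^{p^k}}(z^{p^k})$); the rational part has poles exactly at the $p^k$-th roots of unity.

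From here I would argue that every $p^k$-th root of unity $\zeta$ is a singularity of the function. If $G_h$ extended holomorphically past $\zeta$, then from the functional equation $G_h(z) = ({\rm rational}) + p^{-hk} G_h(z^{p^k})$ and the fact that $z \mapsto z^{p^k}$ maps a neighbourhood of $\zeta$ to a neighbourhood of $1$ (a local isomorphism since $\zeta \ne 0$), holomorphicity of $G_h$ at $\zeta$ together with holomorphicity of the rational part away from $\zeta$ would be incompatible unless $G_h$ is holomorphic at $1$; but $G_h$ has a genuine singularity at $z=1$ because $G_h(x) \to +\infty$ as $x \uparrow 1$ along the reals (all coefficients are positive and the series diverges there, or one uses Pringsheim's theorem: a power series with nonnegative coefficients and radius of convergence $1$ has a singularity at $z=1$). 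Hence $\zeta$ is a singularity. Since the $p^k$-th roots of unity for all $k$ are dense in the unit circle, the set of singularities is dense in $\partial D(0,1)$, so there can be no continuation across any boundary arc; this is precisely a natural boundary. The same argument, verbatim, applies to $H_\beta$ using $H_{\beta^{p^k}}$ in place of $G_h(z^{p^k})$ and again invoking Pringsheim at $z=1$ (the coefficients $\beta^{|n|_p^{-1}}$ are positive).

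The main obstacle is handling the bookkeeping of the iterated functional equation cleanly and making the ``transfer of singularity from $1$ to $\zeta$'' rigorous: one must be careful that the local inverse of $z \mapsto z^{p^k}$ near $\zeta$ is single-valued and that combining a putative holomorphic extension of $G_h$ near $\zeta$ with the known rational part does not accidentally cancel the singularity. The resolution is that the rational summand $\frac{z}{1-z} - \frac{z^{p^k}}{1-z^{p^k}}$ is the \emph{only} term carrying a pole at $\zeta$ and it does so with nonzero residue (its pole at $\zeta$ does not cancel, since the two rational pieces have poles at disjoint sets apart from the common refinement), whereas $G_h(z^{p^k})$ would be holomorphic there if $G_h$ were holomorphic at $1$ — so a contradiction with Pringsheim at $1$ is forced. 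A secondary technical point is that for $H_\beta$ the parameter changes along the recursion, but since $\beta^{p^k}$ remains a fixed real in $(0,1)$ for each fixed $k$, the function $H_{\beta^{p^k}}$ is again a member of the family and in particular continuous up to and including real $z=1$ from the left is irrelevant — what matters is only that it is a power series with positive coefficients and radius of convergence $1$, hence singular at $1$ by Pringsheim, which is all the argument uses.
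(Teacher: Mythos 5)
Your radius-of-convergence computation and the functional equations are exactly the paper's starting point (modulo a small slip: the correct recursion for the second family is $H_\beta(z)=\beta\bigl(\frac{z}{1-z}-\frac{z^p}{1-z^p}\bigr)+H_{\beta^p}(z^p)$, with the factor $\beta$ on the rational part). The gap is in the singularity-detection step. Suppose $G_h$ extends holomorphically to a neighbourhood of a primitive $p^k$-th root of unity $\zeta$. From $G_h(z)=R_k(z)+p^{-hk}G_h(z^{p^k})$ with $R_k$ rational and carrying a simple pole at $\zeta$, you deduce that $G_h(z^{p^k})=p^{hk}\bigl(G_h(z)-R_k(z)\bigr)$ is \emph{meromorphic} at $\zeta$ with a simple pole; transporting through the local biholomorphism $w=z^{p^k}$, this only says that $G_h$ continues \emph{meromorphically} past $w=1$ with at most a simple pole there. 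Pringsheim's theorem does not contradict this: it asserts that $z=1$ is a singular point, and a pole is a singular point. The gap cannot be closed by a growth estimate either, since iterating the functional equation shows $(1-x)G_h(x)\to(1-p^{-1})/(1-p^{-h-1})$ as $x\to1^-$, i.e.\ from inside the disc the singularity at $1$ is indistinguishable from a simple pole. (Excluding a meromorphic continuation of a Mahler function requires a genuinely different input, e.g.\ the rational/natural-boundary dichotomy of Rand\'e cited in the paper's remark.) The same objection applies verbatim to $H_\beta$.

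The missing ingredient is quantitative, and it is what the paper's proof uses: since $p^{-h}<1$ (resp.\ $\beta<1$) and $0\leqslant G_h(\lambda^p)\leqslant\lambda^p/(1-\lambda^p)$, the negative term $-\lambda^p/(1-\lambda^p)$ in the functional equation strictly dominates $p^{-h}G_h(\lambda^p)$, whence $G_h(\lambda\omega)\to-\infty$ as $\lambda\to1^-$ for every primitive $p$-th root of unity $\omega$, and inductively for every primitive $p^k$-th root of unity. A function holomorphic at $\omega$ would have a finite radial limit there, so each such $\omega$ is singular, and density of these points in the circle gives the natural boundary. If you prefer to keep your transfer-from-$1$ framing, you must replace the appeal to Pringsheim by this rate comparison: assuming a finite radial limit at $\zeta$ and comparing the blow-up of the two sides of $p^{-hk}G_h(\lambda^{p^k})=G_h(\lambda\zeta)-R_k(\lambda\zeta)$ forces $G_h(x)\geqslant p^{h}x/(1-x)+O(1)$ along $x=\lambda^{p^k}$, contradicting the trivial bound $G_h(x)\leqslant x/(1-x)$.
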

\begin{proof}
That the radius of convergence is $1$ follows from the fact that
\begin{equation*}
\displaystyle\limsup_{n\to \infty} \big(|n|_p^h\big)^{1/n} = 1= \displaystyle\limsup_{n\to \infty} \big(\beta^{|n|_p^{-1}}\big)^{1/n}.
\end{equation*}
Now, note that $G_h$ and $H_{\beta^p}$ satisfy the following similar functional equations:
\begin{eqnarray}
G_h(z) &=& \frac{z}{1-z}-\frac{z^p}{1-z^p}+p^{-h}G_h(z^p),\label{eq:Ghfunc}\\
H_{\beta}(z) &=& \beta\left(\frac{z}{1-z}-\frac{z^p}{1-z^p}\right)+H_{\beta^p}(z^p).\nonumber
\end{eqnarray}
In order to prove the statement on the natural boundary, we will show by induction on $k\geqslant 1$ that for every primitive $p^k$-th root of unity $\omega$ we have
\begin{equation*}
\lim_{\lambda\to 1^{-}}G_h(\lambda\omega)=-\infty=\lim_{\lambda\to 1^{-}}H_{\beta}(\lambda\omega).
\end{equation*}
We present details for the case of $G_h(z)$; the proof for $H_{\beta}(z)$ is analogous. For $k=1$, it follows from \eqref{eq:Ghfunc} that for every $0<\lambda<1$ we have
\begin{equation*}
G_h(\lambda\omega)=\frac{\lambda\omega}{1-\lambda\omega}-\frac{\lambda^p}{1-\lambda^p}+p^{-h}G_h(\lambda^p).
\end{equation*}
As $G_h(\lambda^p)\leqslant\lambda^p/(1-\lambda^p)$ and $h>0$, it follows that $G_h(\lambda\omega)\to-\infty$ as $\lambda\to 1^{-}$. For $k>1$, the result follows from induction by substituting $\lambda\omega$ into \eqref{eq:Ghfunc}.
\end{proof}

\begin{remark}
Alternatively, since \eqref{eq:Ghfunc} implies that $G_h(z)$ is a so-called $p$-Mahler function, we could have immediately concluded that $G_h(z)$ is either rational or has the unit circle as a natural boundary by a result of Rand\'e \cite{Rande} (see also \cite[Thm.\ 2]{BCR}). The former possibility can be excluded by an explicit computation using the functional equation \eqref{eq:Ghfunc}. Such an  approach does not work for $H_{\beta}(z)$.
\end{remark}

\subsection*{Proofs of Theorems \ref{thm:nb} and \ref{thm:tame}}
 Assume that $f$ satisfies \ref{h1}--\ref{h3}. We have already dealt with the case  where $f$ is coseparable in Proposition \ref{prop:cosepcase}, so it remains to consider the case where $f$ is not coseparable.

Using Lemma \ref{lem:key}, we may write the zeta function $\zeta_f(z)$  as

$$\zeta_f(z)=\zeta_{f|_C}(z) \cdot \exp \left(\sum_{n\geqslant 1}\sum_{\gamma\in \Gamma} \frac{\deg(\sigma^n-\gamma)}{p^{v(\sigma^n-\gamma)}}\frac{z^n}{n}\right)^{1/|\Gamma|},$$ 
with a similar expression for the tame zeta function $\zeta_f^*(z)$. For  $m\in \Z_{\geqslant 0}$, we consider separately the terms corresponding to a fixed value of $v(\sigma^n-\gamma)$, giving rise to functions

$$\zeta_{f,m}(z)= \exp \left( \sum_{\substack{ n \geqslant 1, \g\in\G\\ v(\sigma^n-\g)=m}}  \frac{\deg(\sigma^n-\gamma)}{p^m
}\frac{z^n}{n}\right).$$ Consider the sets $$T_m=\{(n,\gamma)\in \Z_{>0}\times\:\Gamma \mid v(\sigma^n-\gamma)\geqslant m\}.$$ By Lemma \ref{lem:valsubseq}, we have $T_m=\{(s_m n,\gamma \gamma_m^n)\mid n\in \Z_{>0}, \gamma\in \Gamma_m\}$, and hypothesis \ref{h3} implies that the function $$F_m(z)=\exp \left(\sum_{(n,\gamma)\in T_m} \deg(\sigma^n-\gamma) \frac{z^n}{n}\right)$$ satisfies $F_m(z)^{s_m/|\G|}\in\C(z^{s_m})$; hence it is root-rational. It follows that the function $$\zeta_{f,m}(z)=(F_m(z)/F_{m+1}(z))^{1/p^m}$$ is root-rational as well.

We analogously define the tame functions $\zeta^*_{f,m}(z)$, summing only over indices $n$ coprime to $p$. By Equation \eqref{eqn:finaltame}, these are also root-rational, and we have the product formulas \begin{equation}\label{eq:rootratexpand}\zeta_f(z)=\zeta_{f|_C}(z)\left(\prod_{m\geqslant 0}\zeta_{f,m}(z)\right)^{1/|\G|}  \mbox{and }  \zeta^*_f(z)=\zeta^*_{f|_C}(z)\left(\prod_{m\geqslant 0}\zeta^*_{f,m}(z)\right)^{1/|\G|}\!\!\!\!\!\!\!\!\!. \end{equation}

Our next aim is to simplify the tail (i.e.\ the product of all terms with $m$ suitably large) of \eqref{eq:rootratexpand}  using Proposition \ref{prop:valprop}\ref{comm}. To this end, take an integer $M\geqslant \max(N,v(p)/(p-1)+1)$ and set $r:=s_M$ and $\tau:=\sigma^{s_M}\g_M^{-1}$. By Lemma \ref{lem:sigmagammacommute} the elements $\sigma^{s_M}$ and $\gamma_M$ commute, and we can rewrite the tail of \eqref{eq:rootratexpand} as
\begin{equation*}
\prod_{m\geqslant M}\zeta_{f,m}(z)=\exp\left(\sum_{n\geqslant 1}\frac{\deg(\tau^n-1)}{p^{v(\tau^n-1)}}\frac{z^{rn}}{rn}\right).
\end{equation*}
Set $C:=v(\tau-1)\geqslant M$. By Proposition \ref{prop:valprop}\ref{comm} applied to $x=\tau$ and $y=1$, we obtain
\begin{equation}
v(\tau^n-1)=\begin{cases}
C+v(n) &\mbox{if } \Char(\End(G))=0;\\
C|n|_p^{-1} &\mbox{if } \Char(\End(G))=p>0.
\end{cases}
\end{equation}
In particular, if $p{\nmid} n$, then $v(\tau^n-1)$ is independent of $n$, so we have $\zeta_{f,m}^*(z)=1$ for $m\geqslant M$ and $m\neq C$. The product expansion in \eqref{eq:rootratexpand} therefore shows that the tame zeta function is root-rational, proving Theorem \ref{thm:tame}.

Now suppose that $f$ also satisfies \ref{h4}. Since $s$ divides $r$ and $\deg(\tau^n-1)=\deg(\sigma^{rn}-\tilde{\gamma}^{rn/s})$, we see that $(\deg(\tau^n-1))_{n\geqslant 1}$ is a linear recurrent sequence with a unique dominant root, say $\Lambda$, with multiplicity $\mu\in\Z$. We then obtain
\begin{equation*}\label{eqn:itspastdeadline}
z \frac{d}{dz} \log \prod_{m\geqslant M} \zeta_{f,m}(z)=\sum_{n\geqslant 1}\mu\Lambda^nz^{rn}\cdot\begin{cases}
p^{-C} |n|_p^{v(p)} &\mbox{ if } \Char(\End(G))=0\\
p^{-C|n|_p^{-1}} &\mbox{ if } \Char(\End(G))=p
\end{cases}\, \, + R(z),
\end{equation*}
where $R(z)$ is some power series with radius of convergence $>|\Lambda|^{-1/r}$.

As stated in \cite[Lemma 1]{BMW}, the existence of a natural boundary for a series $\sum a_n z^n$ along its circle of convergence implies the existence of a natural boundary along its circle of convergence for the corresponding zeta function $\exp \sum a_n z^n/n$. 
By applying Lemma \ref{lem:NBBasic} with $h=v(p)$ or $\beta=p^{-C}$ (depending on whether $\End(G)$ is of characteristic $0$ or $p$) and substituting $\Lambda z^r$ for $z$ into $G_h(z)$ or $H_{\beta}(z)$, 
it follows that the series $$\left(\prod_{m\geqslant M} \zeta_{f,m}(z)\right)^{1/|\Gamma|}$$ has a natural boundary along its circle of convergence. Theorem \ref{thm:nb} now follows from the product expansion in \eqref{eq:rootratexpand}. \qed

\begin{remark}
An examination of the (by the proof, finite) product expansion for the tame zeta function \eqref{eq:rootratexpand}, shows that in fact $\zeta_f^*(z)^t\in\C(z)$ for $t=p^{C+1}r$, where $C$ and $r$ are as in the proof.
\end{remark}

\section{Discussion of the hypotheses}  \label{sec:disc}

\subsection*{Classification of $G$ for $V=\PP^1$}
Suppose that $V=\PP^1$ and recall that $K$ is algebraically closed. Since $\G$ is finite and $\iota$ has Zariski-dense image, for dimension reasons $G$ is a connected one-dimensional algebraic group. By the Barsotti--Chevalley structure theorem for algebraic groups \cite{Chevalley, ConradChev}, $G$ is an extension of a linear algebraic group by an abelian variety, and thus (again by connectedness and dimension considerations) $G$ is either a one-dimensional connected linear algebraic group or an abelian variety of dimension one. In the latter case, $G$ is an elliptic curve $E$. In the former case, either $G=\Gm$, the multiplicative group; or $G=\Ga$, the additive group \cite[Thm.\ 3.4.9]{SpringerLAG}.
We  denote by $[m]$ the multiplication-by-$m$ map on $G$. The corresponding endomorphism rings are as follows:
\begin{enumerate}[\textup{(}i\textup{)}]
\item if $G=\Gm$, then $\End(G)\cong\Z$, with the map $[m]$ given by $x\mapsto x^m$;
\item if $G=\Ga$, then $\End(G)\cong K\langle \phi\rangle$ is the ring of skew-commutative polynomials in the Frobenius $\phi\colon x\mapsto x^p$, with $\phi a=a^p\phi$ for all $a\in K$;
\item if $G=E$ is  an elliptic curve, then $\End(E)$ is either $\Z$, an order in an imaginary quadratic number field in which $p$ splits, or a maximal order in the quaternion algebra over $\Q$ that ramifies precisely at $p$ and $\infty$ \cite{Deuring}.
\end{enumerate}

\subsection*{Hypothesis \ref{h1}}

\begin{lemma}\label{lem:finiteset}
If $f\colon C\to C$ is an arbitrary map on a finite set $C$, then $\zeta_f(z)$ is rational and $\zeta_f^*(z)$ is root-rational.
\end{lemma}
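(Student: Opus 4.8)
The statement to prove is Lemma~\ref{lem:finiteset}: for an arbitrary self-map $f\colon C\to C$ of a finite set, $\zeta_f(z)$ is rational and $\zeta_f^*(z)$ is root-rational.

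The plan is to use the combinatorial structure of the function digraph $D_f$, which the introduction already described: every connected component of $D_f$ consists of a single cycle with rooted trees hanging off the cycle vertices. The key observation is that a point $x\in C$ is fixed by $f^n$ precisely when $x$ lies on a cycle whose length $\ell$ divides $n$; points in the "tail" trees are never periodic. Hence, writing $P_\ell$ for the number of cycles of length $\ell$ in $D_f$ (a finite, eventually-zero sequence since $C$ is finite), we get $f_n=\sum_{\ell\mid n}\ell P_\ell$, exactly the closed-orbit count of \eqref{euler}. The Euler product \eqref{euler} then gives
\begin{equation*}
\zeta_f(z)=\prod_{\ell\geqslant 1}\frac{1}{(1-z^\ell)^{P_\ell}},
\end{equation*}
a finite product of rational functions, hence rational. (Equivalently: $f_n$ is periodic in $n$ with period $\operatorname{lcm}$ of the cycle lengths, so $f_n=\sum_i m_i\lambda_i^n$ with the $\lambda_i$ roots of unity and $m_i\in\Z$, which by the criterion recalled in Section~\ref{generalities} forces $\zeta_f(z)$ rational.)

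For the tame statement, one route is to invoke formula \eqref{eqn:finaltame}: since $\zeta_f(z)$ is rational, $\zeta_f^*(z)$ is automatically root-rational, as noted in Section~\ref{generalities} immediately after that formula. Alternatively, and more explicitly, one can compute $\zeta_f^*(z)$ directly from the cycle decomposition. A cycle of length $\ell$ contributes to $\zeta_f^*$ only through those $n$ coprime to $p$ that are multiples of $\ell$; writing $\ell=\ell' p^{v_p(\ell)}$, the relevant $n$ are the multiples of $\ell'$ coprime to $p$, which gives a factor whose $p$-th power (or $\ell'$-th power) lies in $\C(z)$ by the same geometric-series manipulation used in \eqref{eqn:finaltame}. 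Taking the product over the finitely many cycles yields root-rationality. I would present the short argument via \eqref{eqn:finaltame}, since it is cleanest.

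There is essentially no obstacle here; the lemma is elementary and the only mild care needed is to make sure the product over $\ell$ is genuinely finite, which follows from $C$ being finite (so there are at most $\#C$ periodic points and hence at most $\#C$ cycles, bounding both the number and the lengths of cycles). I would phrase the proof in two or three sentences, citing \eqref{euler} for the Euler product and the remark after \eqref{eqn:finaltame} for the passage from rational to root-rational.
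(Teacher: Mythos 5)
Your proof is correct and follows essentially the same route as the paper: rationality via the finite Euler product \eqref{euler}, then root-rationality of the tame function as an immediate consequence of rationality. The only cosmetic difference is that the paper deduces the tame statement from \eqref{eq:tameexpand} (using that $f^p$ is again a map on a finite set), whereas you cite \eqref{eqn:finaltame}; both are one-line appeals to Section~\ref{generalities}.
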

\begin{proof}
Since there are only finitely many orbits, this follows for $\zeta_f(z)$ from the Euler product \eqref{euler}, and then for $\zeta^*_f(z)$ from \eqref{eq:tameexpand}. 
\end{proof}

\begin{corollary}\label{cor:h1p1}
If $f\colon V \to V$ is a dynamically affine map and either $\dim V=1$ \textup{(}e.g.\ if $V=\PP^1$\textup{)} or $G$ is complete \textup{(}e.g.\ if $G$ is an abelian variety\textup{)}, then $f$ satisfies \ref{h1}.
\end{corollary}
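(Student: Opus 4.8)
The plan is to reduce Corollary \ref{cor:h1p1} to Lemma \ref{lem:finiteset} by showing that the set $C=\{x\in V(K)\mid\Orb_f(x)\cap\iota((\Gamma\backslash G)(K))=\emptyset\}$ is finite in each of the two cases. Once $C$ is known to be finite, Lemma \ref{lem:finiteset} applied to the map $f|_C\colon C\to C$ gives immediately that $\zeta_{f|_C}(z)$ is rational, which is exactly the assertion of \ref{h1}.

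First I would observe that $W:=V\smallsetminus\iota((\Gamma\backslash G)(K))$ is a closed subvariety of $V$, proper since $\iota$ identifies $\Gamma\backslash G$ with a dense open subset, and that $C\subseteq W(K)$; indeed $C$ consists precisely of those points whose entire forward orbit avoids the dense open part, so in particular $x\in C\Rightarrow x\in W$. If $\dim V=1$, then $W$ is a proper closed subvariety of a curve, hence finite, so $C$ is finite and we are done. For the case where $G$ is complete, the point is that $\pi\colon G\to\Gamma\backslash G$ is a finite surjective morphism and, $G$ being complete, $\Gamma\backslash G$ is complete as well; since $\iota$ embeds the complete variety $\Gamma\backslash G$ as a dense open subset of $V$, density together with completeness (a complete subvariety is closed) forces $\iota(\Gamma\backslash G)=V$, i.e.\ $V=\Gamma\backslash G$ and $W=\emptyset$. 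Hence $C=\emptyset$, which is trivially finite, and $\zeta_{f|_C}(z)=1$ is rational.

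The examples in parentheses then follow: $V=\PP^1$ has dimension $1$, and an abelian variety is by definition complete. The main (and essentially only) obstacle is the verification that $C\subseteq W(K)$ and that $W$ is a proper closed subvariety; this is where one must use that $\iota$ is a morphism onto a Zariski-dense \emph{open} subset (so its complement is a proper closed subset) and that $C$ is defined by the orbit-avoidance condition, which in particular implies $x\notin\iota((\Gamma\backslash G)(K))$ for $x\in C$. Neither step requires any hard input beyond elementary facts about complete varieties and the dimension theory of curves, so the proof is short; the content is really just the reduction to Lemma \ref{lem:finiteset}.
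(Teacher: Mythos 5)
Your proposal is correct and follows essentially the same route as the paper: the paper likewise observes that completeness of $G$ forces $C=\emptyset$ (since the complete, dense open image of $\Gamma\backslash G$ must be all of $V$), and that in dimension $1$ the set $C$ lies in the finite complement of the dense open subset, after which Lemma \ref{lem:finiteset} applies. Your write-up merely makes explicit the details the paper leaves as "clear."
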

\begin{proof}
If $G$ is complete, then $C=\emptyset$, and the result is clear. If $\dim V=1$, then the assumption that $\Gamma\backslash G $ is a Zariski-dense open subset of $V$ implies that $C$ is finite, and Lemma \ref{lem:finiteset} applies.
\end{proof}

\begin{example}
We give an example where \ref{h1} fails. Consider $G=\Gm\tm\Gm$, $\G=\{1\}$, the standard embedding $G\hookrightarrow V:=\PP^1\tm\PP^1$ and $$f\colon V\to V,\: (x,y)\mapsto (x^m,y^m)$$ for an integer $m\geqslant 2$ coprime to $p$. Then 
\begin{equation*}
C=(\PP^1\tm\{0\})\cup(\PP^1\tm\{\infty\})\cup(\{0\}\tm\PP^1)\cup(\{\infty\}\tm\PP^1)
\end{equation*}
is a union of four copies of $\PP^1$ intersecting in four points $$C'=\{(0,0),(0,\infty),(\infty,0),(\infty,\infty)\},$$ all of which are fixed by $f$. Applying Remark \ref{rem:zetaMV}, we get that \begin{equation*}
\zeta_{f|_C}(z)=\zeta_g(z)^4(1-z)^4,
\end{equation*}
where $\zeta_g(z)$ is the zeta function of $g\colon{\PP^1}\to\PP^1,\: x\mapsto x^m$, which was shown to be transcendental over $\C(z)$ by Bridy \cite{Bridy} (our result in this paper even shows that the function has a natural boundary).
\end{example}

\subsection*{Hypothesis \ref{h2}}

\begin{proposition}\label{prop:h2general} \mbox{ }
\begin{enumerate}[\textup{(}i\textup{)}]
\item \label{state:as} A nontrivial abelian variety $A$ satisfying \ref{h2} with $\RR=\End(A)$ has to be simple.
\item \label{state:gns} There exist commutative algebraic groups $G$ of arbitrary dimension $>1$ that satisfy \ref{h2} with $\RR=\End(G)$ but that are not simple.
\item \label{state:h2id} For any connected commutative algebraic group $G$, \ref{h2} is equivalent to the claim that all nonzero elements of $\RR$ are isogenies and for every $m$, the set 
$$I_m := \{ \tau \in \RR - \{0\} \mid \log_p \deg_{\mathrm{i}}(\tau) \geqslant m \} \cup \{ 0 \}$$
is an ideal in $\RR$. \textup{(}Note that by the multiplicativity of the inseparable degree this is equivalent to 
$ \deg_{\mathrm{i}}(\tau_1 + \tau_2) \geqslant \min \{  \deg_{\mathrm{i}}(\tau_1),  \deg_{\mathrm{i}}(\tau_2) \} $ for all nonzero $\tau_1, \tau_2$, $\tau_1\neq -\tau_2$.\textup{)}
\item \label{state:orda} Let $G=A$ be a nontrivial abelian variety. Then \ref{h2} holds with $\RR=\Z \hookrightarrow \End(A)$ \textup{(}and then necessarily $\Gamma =\{1\}$ or $\Gamma=\{\pm 1\}$\textup{)}. 
\end{enumerate}
\end{proposition}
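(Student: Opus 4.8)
The plan is to prove the four parts of Proposition \ref{prop:h2general} by reducing everything to the description in part \ref{state:h2id}, so I would establish that characterization first and then use it as the working criterion throughout.

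\textbf{Part \ref{state:h2id}.} The content is to show that having a discrete valuation $v$ on $\RR$ with $\deg_{\mathrm i}(\tau)=p^{v(\tau)}$ is the same as asking that each $I_m$ be a (two-sided) ideal. One direction is immediate: given such a $v$, the set $I_m=\{\tau\mid v(\tau)\geqslant m\}\cup\{0\}$ is the standard valuation ideal, so it is an ideal. Conversely, given that all $I_m$ are ideals, define $v(\tau):=\log_p\deg_{\mathrm i}(\tau)$ for $\tau\neq 0$ and $v(0)=\infty$. Multiplicativity of the inseparable degree (applied to the composite $\tau_1\tau_2$, using $\deg_{\mathrm i}(\tau_1\tau_2)=\deg_{\mathrm i}(\tau_1)\deg_{\mathrm i}(\tau_2)$) gives $v(\tau_1\tau_2)=v(\tau_1)+v(\tau_2)$, and $v(\tau)=\infty\iff\tau=0$ holds since a nonzero element of $\RR$ is by hypothesis an isogeny, hence has finite inseparable degree. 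The one nontrivial point is the ultrametric inequality $v(\tau_1+\tau_2)\geqslant\min\{v(\tau_1),v(\tau_2)\}$ (for $\tau_1\neq-\tau_2$; the case $\tau_1=-\tau_2$ gives $0$ and is fine): setting $m=\min\{v(\tau_1),v(\tau_2)\}$, both $\tau_1,\tau_2\in I_m$, and since $I_m$ is an ideal (in particular closed under addition) we get $\tau_1+\tau_2\in I_m$, i.e.\ $v(\tau_1+\tau_2)\geqslant m$. This also yields the reformulation stated parenthetically.

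\textbf{Parts \ref{state:as} and \ref{state:orda}.} For \ref{state:as}, suppose $A=A_1\times A_2$ is a nontrivial product (or more generally isogenous to one, which by Poincar\'e reducibility is the obstruction to simplicity). Using the criterion from \ref{state:h2id}: take $\tau_1$ to be a separable isogeny on $A_1$ extended by $0$ on $A_2$, and $\tau_2$ to be $0$ on $A_1$ and a separable isogeny on $A_2$; both have inseparable degree $1$, i.e.\ lie in $I_0\setminus I_1$, but one should instead pick $\tau_1$ with large inseparable degree on the $A_1$ factor and $0$ on $A_2$ — then $\tau_1\in I_m$ for large $m$ but $\tau_1\cdot(\mathrm{id}_{A_1}\times 0)$ considerations show $I_m$ cannot be a left ideal unless it swallows endomorphisms supported purely on $A_2$, contradiction for an appropriately chosen product. (The clean formulation: a projector $e\in\End(A)\otimes\Q$ onto a proper factor, cleared of denominators, produces $\tau$ with $v(\tau)$ finite such that $v(n\tau)$ is bounded while $v(n)\to\infty$ along powers of $p$, contradicting $v(n\tau)=v(n)+v(\tau)$ since by part \ref{state:orda}'s argument $v(p)>0$ on $A$.) For \ref{state:orda}: on $\Z\hookrightarrow\End(A)$, the inseparable degree of $[n]$ is $p^{\dim A\cdot v_p(n)}$ (the multiplication-by-$p$ map on a $g$-dimensional abelian variety has inseparable degree dividing $p^g$, and in fact one gets $\deg_{\mathrm i}[p]=p^{g}$ when... — more carefully, $\deg_{\mathrm i}[n]$ is a power of $p$ that is multiplicative in $n$ and equals $1$ for $p\nmid n$), so $v:=c\cdot v_p$ for the appropriate constant $c$ is the required valuation on $\Z$. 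The restriction $\Gamma=\{1\}$ or $\{\pm1\}$ is because $\Aut(A)\cap\Z=\{\pm1\}$, and $\Gamma\subseteq\RR=\Z$ by \ref{h2}.

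\textbf{Part \ref{state:gns}.} Here I would exhibit explicit examples. The natural candidate is a vector group $G=\Ga^d$, or more precisely an iterated extension; since $\End(\Ga)=K\langle\phi\rangle$ is a (noncommutative) domain carrying the $\phi$-adic valuation with $\deg_{\mathrm i}(\sum a_i\phi^i)=p^{\min\{i: a_i\neq 0\}}$, one checks $\Ga$ itself satisfies \ref{h2}, and then one wants a non-simple $G$. Take $G$ to be an extension of $\Ga$ by $\Ga$, or $G=E\times\Ga$ with $E$ supersingular — but one must verify $\End(G)$ (not just a subring) admits such a valuation, which is the delicate point. A cleaner choice: let $G$ be a non-split extension of $\Ga^{d}$ by $\Gm$ or a suitable Witt-vector group $W_d$, whose endomorphism ring is $W(\F_p)\langle\phi\rangle$-like with a genuine valuation; these are not simple (they contain $\Gm$ or a sub-Witt-group) yet $\End(G)$ is a domain with the $p$- or $\phi$-adic valuation controlling inseparable degree. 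I expect \textbf{this part to be the main obstacle}: one must pin down $\End(G)$ completely for the chosen $G$ and verify the valuation axioms plus the inseparable-degree formula on the nose, rather than just on a convenient subring, and ensure $G$ is genuinely not simple (has a nontrivial connected algebraic subgroup that is not a direct factor does not suffice — one needs it to not be simple, i.e.\ admit a proper nontrivial connected subgroup, which vector and Witt groups visibly do). The dimension can be made arbitrary by iterating (e.g.\ $W_d$ for all $d>1$).
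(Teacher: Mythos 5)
Parts \ref{state:h2id} and \ref{state:orda} of your proposal are correct and essentially identical to the paper's arguments (valuation ideals versus ideals, and $v=c\,v_p$ on $\Z\hookrightarrow\End(A)$ via multiplicativity of the inseparable degree). The problems are in parts \ref{state:as} and, more seriously, \ref{state:gns}.

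For \ref{state:as}, your argument is tangled and partly ill-posed: the map ``a separable isogeny on $A_1$ extended by $0$ on $A_2$'' is not an isogeny of $A$ (its kernel contains $A_2$), so it has no inseparable degree in the sense of \ref{h2} and cannot be tested for membership in $I_m$; the criterion of part \ref{state:h2id} presupposes that every nonzero element of $\RR$ is an isogeny, which is exactly what fails here. The clean route, which is the one the paper takes, is one line: a ring carrying a discrete valuation has no nontrivial zero divisors (Proposition \ref{prop:valprop}\ref{domain}), whereas by Poincar\'e reducibility $\End(A)$ is a domain if and only if $A$ is simple. (Equivalently: the idempotents you invoke, cleared of denominators, are nonzero non-isogenies, already violating the first clause of \ref{h2}.) Your parenthetical ``clean formulation'' with $v(n\tau)$ bounded does not obviously get off the ground, since $v(\tau)$ is not defined for a non-isogeny $\tau$.

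Part \ref{state:gns} is a genuine gap: you do not produce a verified example, you flag it yourself as ``the main obstacle'', and the candidates you list in fact fail. For $G=\Ga^d$ or $G=E\times\Ga$, the ring $\End(G)$ contains zero divisors (projections), so \ref{h2} with $\RR=\End(G)$ fails by the very argument of part \ref{state:as}. For truncated Witt groups $W_d$ with $d>1$, the group has exponent $p^d$, so $[p]$ is a nonzero nilpotent in $\End(W_d)$; again not a domain, again no valuation. The paper's construction is of a different nature: take a non-split extension $1\to\Gm\to G\to A\to 1$ of a \emph{simple} abelian variety $A$ by $\Gm$ whose class in $\mathrm{Ext}^1(A,\Gm)\cong\widehat A(K)$ is a non-torsion point. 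By results of Arima, $G$ then contains no nontrivial abelian subvariety and the restriction $\End(G)\to\End(\Gm)\cong\Z$ is injective, so $\End(G)\cong\Z$; the inseparable degree of $[n]$ on $G$ is the product of those on $\Gm$ and $A$, and $v=(2g-r+1)v_p$ (with $g=\dim A$ and $r$ the $p$-rank) verifies \ref{h2}, while $G$ is visibly non-simple since it contains $\Gm$. Without some such mechanism forcing the \emph{full} endomorphism ring to be small, your sketch cannot be completed.
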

\begin{proof} Note that the hypothesis on the existence of $v$ implies that $\RR$ is a (not necessarily commutative) domain (Proposition \ref{prop:valprop}\ref{domain}). 

\ref{state:as} Since an abelian variety $A$ factors up to isogeny into a direct product of simple abelian varieties, $\End(A)$ is a domain if and only if $A$ is simple. 

\ref{state:gns} Consider extensions of algebraic groups
$$ 1 \rightarrow \Gm \rightarrow G \rightarrow A \rightarrow 1,$$ where $G$ is abelian and $A$ is any simple abelian variety. 
These are classified by $\mathrm{Ext}^1(A,\Gm) \cong \widehat A(K)$, where $\widehat A$ is the dual abelian variety of $A$ \cite[Thm.\ 9.3]{MilneAV}. Suppose $\widehat A(K)$ has a non-torsion point $P$ (in particular, $K$ has to be transcendental over $\F_p$) and choose an extension corresponding to $P$. We claim that $G$ does not contain any nontrivial abelian variety. Suppose otherwise and let $A'$ be a nontrivial abelian variety contained in $G$. The image of $A'$ in $A$ cannot be zero, and hence is equal to all of $A$ since $A$ is simple. It follows that $A'$ and $\Gm$ generate $G$, and a result of Arima \cite[Thm.~2]{Arima} implies that the extension corresponds to a point of finite order in $\widehat A(K)$. We conclude that $G$ does not contain any nontrivial abelian variety, and hence by \cite[Prop.~7]{Arima} the restriction map $\End(G) \rightarrow \End(\Gm) \cong \Z$ is injective, meaning that $\End(G)\cong \Z$. The inseparable degree of $[n]$ on $G$ is the product of those on $\Gm$ and $A$, and if $A$ has dimension $g$ and $p$-rank $r$, then the valuation $v=(2g-r+1)v_p$ on $\RR=\End(G)$ satisfies \ref{h2}. 

\ref{state:h2id} If $v$ is a valuation as in \ref{h2}, then $$I_m = \{ \tau \in \RR \mid v(\tau) \geqslant m \}$$ is an ideal. Conversely, if all $I_m$ are ideals, then $v$ defined by $$v(\tau):= \sup \{ m \mid \tau \in I_m \}$$ satisfies \ref{h2}. 

\ref{state:orda} For a nontrivial abelian variety $A$ the endomorphism ring $\End(A)$ has characteristic zero and the maps $[m] \colon A \rightarrow A$ for $m \in \Z\backslash\{0\}$ are isogenies, and are separable if and only if $p{\nmid} m$. By multiplicativity of the inseparable degree, we find that $\deg_{\mathrm{i}} ([m]) = \deg_{\mathrm{i}}([p])^{v_p(m) }$, and hence the valuation $v \colon \RR \rightarrow \Z\cup \{\infty\}$ given by $v([m])= c v_p(m)$ with $c:= \log_p \deg_{\mathrm{i}}([p]) $ satisfies \ref{h2}. \qedhere
\end{proof}

\begin{lemma}\label{lem:h2p1}
Hypothesis \ref{h2} holds for dynamically affine maps on $\PP^1$.
\end{lemma}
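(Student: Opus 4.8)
The plan is to go through the classification of the one-dimensional connected algebraic group $G$ attached to a dynamically affine map on $\PP^1$ (namely $\Gm$, $\Ga$, or an elliptic curve $E$), as recalled just above, and to exhibit in each case a subring $\RR \subseteq \End(G)$ containing $\sigma$ and $\Gamma$, all of whose nonzero elements are isogenies, together with a discrete valuation $v$ on $\RR$ with $\deg_{\mathrm i}(\tau) = p^{v(\tau)}$ for every isogeny $\tau \in \RR$. Since $\Gamma \subseteq \Aut(G)$ is finite, I will first record what $\Gamma$ can be: for $G = \Gm$ and $G = E$ the automorphism group of $G$ as an algebraic group is finite (contained in $\{\pm 1\}$ for $\Gm$, and in the group of roots of unity of order dividing $4$ or $6$ for $E$), so $\Gamma$ automatically lands in any natural choice of $\RR$; for $G = \Ga$ the relevant automorphisms are the nonzero scalars $a \in K^\times \subseteq \End(\Ga)$, and one must check these lie in the chosen subring and are units there.

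For $G = \Gm$, take $\RR = \End(\Gm) \cong \Z$; every nonzero integer acts as an isogeny $x \mapsto x^m$, which is separable exactly when $p \nmid m$, and by multiplicativity of inseparable degree $\deg_{\mathrm i}([m]) = p^{v_p(m)}$, so $v = v_p$ works — this is exactly the argument of Proposition~\ref{prop:h2general}\ref{state:orda} specialised to $\Gm$. For $G = E$ an elliptic curve, $\End(E)$ is an order in $\Q$, in an imaginary quadratic field, or in the quaternion algebra ramified at $p$ and $\infty$. If $\Gamma = \{\pm 1\}$ and $\sigma \in \Z$ one can just take $\RR = \Z$ again and $v = cv_p$ as in \ref{state:orda}. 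In general, I would take $\RR = \End(E)$ itself (which contains $\Gamma$ since $\Gamma$ consists of automorphisms) and use the valuation coming from the unique prime of $\End(E)$ above $p$: in the ordinary case $p$ splits in the imaginary quadratic order and the inseparable degree of an isogeny equals $p$ to the valuation at the prime $\mathfrak p$ dividing $\ker[p]^{\mathrm{red}}$; in the supersingular case $\End(E)$ is a maximal order in the quaternion algebra and there is a unique two-sided maximal ideal over $p$, giving a (noncommutative) discrete valuation $v$ with $\deg_{\mathrm i}(\tau) = p^{v(\tau)}$. Here I would invoke the standard Deuring/Waterhouse description of $p$ in $\End(E)$ and the compatibility of the inseparable degree with the $p$-adic valuation; alternatively one can verify the ideal criterion of Proposition~\ref{prop:h2general}\ref{state:h2id} directly, i.e.\ that $\deg_{\mathrm i}(\tau_1 + \tau_2) \geqslant \min\{\deg_{\mathrm i}(\tau_1), \deg_{\mathrm i}(\tau_2)\}$.

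For $G = \Ga$, $\End(\Ga) \cong K\langle\phi\rangle$ is the twisted polynomial ring in the Frobenius $\phi$, a left and right Euclidean domain in which every nonzero element is an isogeny; the inseparable degree of $\sum_{i} a_i \phi^i$ (with $a_0 \neq 0$ when it is separable-at-$0$, etc.) is $p$ to the $\phi$-adic order, i.e.\ the order of vanishing at $\phi = 0$. So $v(\sum a_i \phi^i) := \min\{i : a_i \neq 0\}$ is a discrete valuation on $\RR = K\langle\phi\rangle$ with $\deg_{\mathrm i}(\tau) = p^{v(\tau)}$, the scalars $K^\times$ are units (valuation $0$), so $\Gamma$ is accommodated, and $\sigma$ — an additive isogeny — lies in $\RR$. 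The main obstacle is the elliptic-curve case, specifically the supersingular subcase: one must be careful that $v$ is genuinely a discrete valuation on the noncommutative maximal order (submultiplicativity on sums and additivity on products), and that $p^{v(\tau)}$ really is the inseparable degree rather than, say, the square root of the (reduced) norm at $p$. I expect to settle this by citing the structure of $\End(E)$ at $p$ together with the fact that for an isogeny $\tau$ of an elliptic curve in characteristic $p$, $\deg_{\mathrm i}(\tau)$ equals $p^{\mathrm{ord}_p(\ker\tau)}$ where the order is measured by the connected-étale splitting of $\ker\tau$, and matching this with the valuation at the unique maximal ideal over $p$.
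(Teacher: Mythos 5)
Your construction is correct, but it follows the route of the Remark that the paper places \emph{after} this lemma (explicitly exhibiting a valuation for each of $\Gm$, $\Ga$, $E$, following \cite{BridyBordeaux}) rather than the paper's own proof, which verifies the ideal criterion of Proposition \ref{prop:h2general}\ref{state:h2id} instead of constructing $v$ directly: for $\Gm$ and $\Ga$ the inseparable isogenies together with $0$ form the principal ideal generated by Frobenius, so $I_m=(\phi^m)$, and for $E$ the paper quotes \cite[II.2.12]{Silverman2}, namely that $\log_p\deg_{\mathrm{i}}(\tau)$ is the largest $r$ for which $\tau$ factors through the $p^r$-Frobenius $E\to E^{(p^r)}$, which makes all the $I_m$ ideals with no case split between ordinary and supersingular and no appeal to the arithmetic of quadratic or quaternion orders. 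That single citation is exactly what discharges the step you rightly single out as the main obstacle: combined with multiplicativity of the inseparable degree it shows $v(\tau):=\log_p\deg_{\mathrm{i}}(\tau)$ is already a discrete valuation, and your identification of it with a $\mathfrak{p}$-adic (resp.\ $\mathfrak{P}$-adic) valuation becomes a consequence rather than something to be proved via Deuring theory. One small correction to your ordinary case: the relevant prime is the one whose torsion is the \emph{connected} part of $E[p]$ (the prime containing Frobenius), not the one attached to $\ker[p]^{\mathrm{red}}$, which is the \'etale part and governs the separable, not the inseparable, contribution. Your $\Gm$ and $\Ga$ cases are complete and agree with the paper, and your supersingular normalisation ($v(p)=2$, matching $\deg_{\mathrm{i}}([p])=p^2$) is the right one.
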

\begin{proof}
We verify, for $G$ a one-dimensional connected algebraic group, that the $I_m$ as in Proposition \ref{prop:h2general}\ref{state:h2id} are indeed ideals. Note that the claim that all nonzero elements of $\End(G)$ are isogenies is immediate by a dimension argument. For $G=\Gm$ and $G=\Ga$, the set of inseparable isogenies together with the zero map is the principal ideal generated by the Frobenius $\phi\colon x\mapsto x^p$, so $I_m=(\phi^m)$ is an ideal. If $G=E$ is an elliptic curve, then for any isogeny $\tau\colon E\to E$, we have that $\log_p\deg_{\mathrm{i}}(\tau)$ is the largest $r>0$ for which $\tau$ factors through the $p^r$-Frobenius $E\to E^{\left(p^r\right)}$ \cite[II.2.12]{Silverman2}, which again implies that the $I_m$ are ideals.
\end{proof}

\begin{remark} Another approach, folllowing \cite{BridyBordeaux},  is to check the result for each of the possible one-dimensional groups $G$ with the following valuations: 
\begin{enumerate}[(i)]
\item \label{gmautv} if $G=\Gm$, then on $\End(G) \cong \Z$ set $v=v_p$, the $p$-adic valuation;
\item \label{gaautv} If $G=\Ga$, then on $\End(G)=K\langle\phi\rangle$ set $v=v_\phi$, the valuation associated to the two-sided ideal $(\phi)$;
\item \label{eautv} If $G=E$ is an elliptic curve, set $v=v_p \circ N$, where $N$ is the  field norm of the extension $\End(E)\otimes \Q$ of ${\Q}$ if $E$ is ordinary and $N$ is the reduced norm on the quaternion algebra $\End(E) \otimes \Q$ if $E$ is supersingular.
\end{enumerate}
\end{remark}

\subsection*{Hypothesis \ref{h3}}

The following general observation will be used multiple times to verify that Hypothesis \ref{h3} holds in certain cases: \emph{If $R$ is a (not necessarily commutative) domain and $\Gamma$ is a nontrivial finite subgroup of the multiplicative group of $R$, then $\sum_{\gamma \in \Gamma} \gamma =0$.}

We first discuss the degree function on a commutative subring $\RR$ of the endomorphism ring $\End(A)$ of an abelian variety $A$. 
The ring $S=\End(A)\otimes_{\Z} \overline{\Q}$ is a semisimple $\overline{\Q}$-algebra, and hence  is isomorphic to a product of finitely many (full) rings of matrices over $\overline{\Q}$. Let  $$\psi=(\psi_1,\dots,\psi_k) \colon S \to \prod_{i=1}^k \mathrm{M}_{n_i}(\overline{\Q})$$ be such an isomorphism. The degree of an endomorphism $\alpha\in \End(A)$ can be computed by the formula   $$\deg(\alpha)= \prod_{i=1}^k \det\psi_i(\alpha)^{\nu_i},$$ where $\nu_i\in \Z_{>0}$ are certain integers (see \cite[Cor.~3.6]{Grieve} or the discussion in \cite[Prop.~2.3]{D1}). Since the ring $\RR$ is commutative, the matrices in $\psi_i(\RR)\subseteq \mathrm{M}_{n_i}(\overline{\Q})$ can be simultaneously triangularised, so that after conjugating by appropriate matrices,  we may assume that $\psi(\RR)$ lies in the product of rings $\mathrm{UT}_{n_i}(\overline{\Q})$ of upper triangular matrices. Composing the homomorphism $\psi_i|_{\RR}$ with the homomorphism $\mathrm{UT}_{n_i}(\overline{\Q}) \to  \overline{\Q}^{\, n_i}$ that maps each matrix to the tuple consisting of its diagonal elements, we obtain a ring homomorphism $$\lambda=(\lambda_1,\ldots,\lambda_l) \colon \RR \to \overline{\Q}^{\, l},$$ where $l=\sum_{i=1}^k n_i$. The degree function on $\RR$ then takes the form $$\deg(\alpha)= \prod_{j=1}^l \lambda_j(\alpha)^{\mu_j},$$ where $\mu_j\in \Z_{>0}$ are certain integers.

\begin{proposition}\label{prop:abvarh3}
Let $A$ be an abelian variety over $K$, let $f$ be a dynamically affine map with $G=A$, and  let $\RR$ be a commutative subring of the endomorphism ring $\End(A)$ that contains both $\sigma$ and $\Gamma$. If  \ref{h2} is satisfied for the ring $\RR$, then $f$ satisfies \ref{h3}.
\end{proposition}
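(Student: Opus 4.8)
The plan is to reduce Hypothesis \ref{h3} to the rationality criterion for zeta functions recalled in Section \ref{generalities}: a series $\exp(\sum_{n\geqslant 1} b_n z^n/n)$ lies in $\C(z)$ if and only if \eqref{lrec} holds for all $n$ with each polynomial multiplicity $p_i$ replaced by a constant integer $m_i$, i.e.\ when $b_n = \sum_i m_i \mu_i^n$ for all $n\geqslant 1$ with $m_i\in\Z$ and $\mu_i\in\C^{\times}$ independent of $n$. So, fixing $m$ for which $s_m$ is defined and setting $b_n := \frac1{|\Gamma_m|}\sum_{\gamma\in\Gamma_m}\deg(\sigma^{s_mn}-\gamma\gamma_m^n)$, it suffices to exhibit $(b_n)$ in this form. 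Each summand makes sense because $\sigma^{s_mn}-\gamma\gamma_m^n = \sigma^{s_mn}-\gamma'$ with $\gamma'\in\Gamma$ is an isogeny by Lemma \ref{lem:silvertwist}\ref{state:dynamicallyconfined}.

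First I would apply the degree homomorphism $\lambda = (\lambda_1,\dots,\lambda_l)\colon\RR\to\overline{\Q}^{\,l}$ constructed in the discussion above (this is where commutativity of $\RR$ enters, via simultaneous triangularisation), for which $\deg(\alpha) = \prod_{j=1}^l\lambda_j(\alpha)^{\mu_j}$ with $\mu_j\in\Z_{>0}$, to the endomorphisms $\sigma^{s_mn}-\gamma\gamma_m^n\in\RR$. Since each $\lambda_j$ is a ring homomorphism and $\sigma,\gamma_m\in\RR$, this gives
$$\deg(\sigma^{s_mn}-\gamma\gamma_m^n) = \prod_{j=1}^l(\lambda_j(\sigma)^{s_mn}-\lambda_j(\gamma)\lambda_j(\gamma_m)^n)^{\mu_j}.$$
Expanding each factor by the binomial theorem and multiplying out over $j$ produces, for all $n\geqslant 1$, an identity
$$\deg(\sigma^{s_mn}-\gamma\gamma_m^n) = \sum_{\vec k} C_{\vec k}\,\chi_{\vec k}(\gamma)\,\Lambda_{\vec k}^{\,n},$$
the sum over multi-indices $\vec k = (k_1,\dots,k_l)$ with $0\leqslant k_j\leqslant\mu_j$, where $C_{\vec k} = \prod_j (-1)^{k_j}\binom{\mu_j}{k_j}\in\Z$, the quantity $\Lambda_{\vec k} = \prod_j \lambda_j(\sigma)^{s_m(\mu_j-k_j)}\lambda_j(\gamma_m)^{k_j}\in\overline{\Q}^{\times}$ is independent of $\gamma$, and $\chi_{\vec k}\colon\gamma\mapsto\prod_j\lambda_j(\gamma)^{k_j}$.

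The crux is then that $\chi_{\vec k}$, restricted to $\Gamma_m\subseteq\RR^{\times}$, is a one-dimensional character, being a product of powers of the group homomorphisms $\lambda_j|_{\Gamma_m}\colon\Gamma_m\to\overline{\Q}^{\times}$. Summing over $\gamma\in\Gamma_m$ and dividing by $|\Gamma_m|$, orthogonality of characters collapses $\frac1{|\Gamma_m|}\sum_{\gamma\in\Gamma_m}\chi_{\vec k}(\gamma)$ to $1$ when $\chi_{\vec k}|_{\Gamma_m}$ is trivial and to $0$ otherwise, so $b_n = \sum_{\vec k\in\mathcal K} C_{\vec k}\Lambda_{\vec k}^{\,n}$ with $\mathcal K = \{\vec k\mid \chi_{\vec k}|_{\Gamma_m}=1\}$. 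Grouping terms with equal $\Lambda_{\vec k}$ puts $(b_n)$ in the desired integer-coefficient exponential-polynomial form, whence \ref{h3}.

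I do not anticipate a real obstacle: the whole point is that commutativity of $\RR$ makes $\Gamma$ abelian and turns the $\chi_{\vec k}$ into genuine characters, so that character orthogonality removes the $\gamma$-dependence and leaves integer coefficients. The only items needing mild care are that the $\Lambda_{\vec k}$ are nonzero (clear, since $\lambda_j(\sigma)\neq 0$ because $\sigma$ is an isogeny, and $\lambda_j(\gamma_m)$ is a root of unity because $\gamma_m$ has finite order) and that the displayed expansion is an exact identity for every $n\geqslant 1$, which is precisely what the criterion of Section \ref{generalities} demands. Note that the valuation $v$ of \ref{h2} intervenes only through the definitions of $s_m$, $\Gamma_m$, $\gamma_m$ and nowhere else, so the argument does not depend on the choice of $\gamma_m$, in accordance with the remark following \ref{h3}.
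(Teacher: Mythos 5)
Your proposal is correct and follows essentially the same route as the paper's proof: apply the ring homomorphisms $\lambda_j$ obtained from simultaneous triangularisation of the commutative ring $\RR$, expand $\deg(\sigma^{s_mn}-\gamma\gamma_m^n)=\prod_j\lambda_j(\sigma^{s_mn}-\gamma\gamma_m^n)^{\mu_j}$ into exponential terms whose $\gamma$-dependence is through characters of $\Gamma_m$, and kill the nontrivial characters by summing over $\Gamma_m$, leaving an integer-coefficient exponential sum to which the rationality criterion of Section \ref{generalities} applies. Your version is, if anything, slightly more explicit than the paper's about the integrality of the coefficients $C_{\vec k}$.
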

\begin{proof}
Using the notation provided by the statement of \ref{h3}, write $\tau:=\sigma^{s_m}$ and $\tilde{\tau}:=\tau\g_m^{-1}$. By Lemma \ref{lem:silvertwist}\ref{state:dynamicallyconfined}, confinedness of $\sigma$ implies  that $\tau^n-\g$ is an isogeny for all $\g\in\G$. Since $\RR$ is commutative, we have $\deg(\tau^n-\g\g_m^n)=\deg(\tilde\tau^n-\g)$ for $\g\in \Gamma$.

Using the notation explained at the beginning of this subsection, 
we obtain the formula
$$\sum_{\g\in\G_m}\deg(\tilde\tau^n-\g) = \sum_{\g\in\G_m} \prod_{j=1}^l \lambda_j(\tilde\tau^n-\g)^{\mu_j}.$$
Since $\lambda_j \colon \RR \to \overline{\Q}$ are ring homomorphisms, we may expand the product on the right hand side and rewrite the formula as 
$$\sum_{\g\in\G_m} \prod_{j=1}^l \lambda_j(\tilde\tau^n-\g)^{\mu_j}=\sum_{k}\sum_{\g\in\G_m}\chi_k(\g)\eta_k^n$$
for some $\eta_k\in\overline{\Q}$ and some characters $\chi_k\colon \Gamma\to\overline{\Q}^{\tm}$. If a character $\chi_k$ is nontrivial, then $\sum_{\g\in\G_m}\chi_k(\g)=0$; otherwise, $\sum_{\g\in\G_m}\chi_k(\g)=|\G_m|$. Thus, we obtain
\begin{equation*}
\frac{1}{|\G_m|}\sum_{\g\in\G_m}\deg(\tau^n-\g\g_m^n)=\sum_{\substack{\chi_k=1}}\eta_k^n,
\end{equation*}
and the desired result follows from the general criteria for rationality of the zeta function discussed in Section 2.
\end{proof}

\begin{proposition}\label{prop:h3p1}
A dynamically affine map on $\PP^1$ satisfies \ref{h3}.
\end{proposition}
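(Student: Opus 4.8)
The plan is to go through the classification of one-dimensional connected algebraic groups $G$ listed at the start of this section ($\Gm$, $\Ga$, or an elliptic curve $E$) and check \ref{h3} in each case. In all cases we use the valuation $v$ supplied by Lemma \ref{lem:h2p1}, and we may assume $f$ is not coseparable, since otherwise Lemma \ref{lem:sm}\ref{lem:sma} says $s_m$ does not exist for $m>0$ and there is nothing to verify beyond $m=0$. Write $\tau:=\sigma^{s_m}$ and $\tilde\tau:=\tau\gamma_m^{-1}$; by Lemma \ref{lem:sigmagammacommute} these elements commute with the elements of $\Gamma_m$ (after passing to $v$-values $\geqslant N$, which we may arrange), and by Lemma \ref{lem:silvertwist}\ref{state:dynamicallyconfined} each $\tilde\tau^n-\gamma$ is an isogeny. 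We must then show that $\exp\bigl(\tfrac{1}{|\Gamma_m|}\sum_{n\geqslant 1,\,\gamma\in\Gamma_m}\deg(\tilde\tau^n-\gamma)\tfrac{z^n}{n}\bigr)\in\C(z)$, for which by the rationality criteria in Section 2 it suffices to show that $\tfrac{1}{|\Gamma_m|}\sum_{\gamma\in\Gamma_m}\deg(\tilde\tau^n-\gamma)$ is a sum of the form $\sum_i m_i\Lambda_i^n$ with $m_i\in\Z$ independent of $n$.

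For $G$ an elliptic curve, $\End(E)$ is commutative when $E$ is ordinary (an order in an imaginary quadratic field), and since $\Gamma$ and $\sigma$ lie in the commutative subring $\RR=\End(E)$ we may simply invoke Proposition \ref{prop:abvarh3}, which handles exactly this situation. When $E$ is supersingular, $\End(E)$ is a maximal order in a quaternion algebra, but the finite subgroup $\Gamma\subseteq\Aut(E)$ together with $\sigma$ still generates a commutative subring $\RR$: indeed $\Aut(E)$ is finite cyclic (of order dividing $24$), and by Lemma \ref{lem:silvertwist}\ref{state:autotwist} conjugation by $\sigma$ induces an automorphism $\alpha$ of $\Gamma$; since $\sigma^{s_m}$ commutes with $\Gamma_m$ one can still run the degree computation using the reduced norm. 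The cleanest route is to observe that $\deg=\mathrm{Nrd}$ is a multiplicative quadratic form, that $\deg(\tilde\tau^n-\gamma)=\mathrm{Nrd}(\tilde\tau^n)-\mathrm{Trd}(\tilde\tau^n\bar\gamma)+\mathrm{Nrd}(\gamma)$ expands as a linear combination of terms of the form (character of $\gamma$)$\cdot\eta^n$, and that summing over $\gamma\in\Gamma_m$ kills the nontrivial-character contributions just as in the proof of Proposition \ref{prop:abvarh3}.

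For $G=\Gm$ we have $\End(G)\cong\Z$ and $\Gamma\subseteq\{\pm1\}$, so $\tilde\tau=\pm m^{s_m}$ for some integer $m$ with $|m|\geqslant 2$ and $\deg([k])=|k|$; then $\sum_{\gamma\in\Gamma_m}\deg(\tilde\tau^n-\gamma)$ is visibly $\pm(m^{s_m})^n+(\text{bounded})$, which is linear recurrent, and the corresponding zeta function is rational. For $G=\Ga$ in characteristic $p$ we have $\Aut(\Ga)\cong\Gm(K)=K^{\times}$, so $\Gamma$ is a finite cyclic group of elements of $K^{\times}$ of order prime to $p$ (embedded as scalars $a\in K^\times\subseteq K\langle\phi\rangle$), and $\sigma=\sum a_i\phi^i$. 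Here the degree of a nonzero $\tau=\sum_{i\geqslant d}b_i\phi^i$ with $b_d\neq0$ equals $p^{\deg_\phi\tau}$, i.e.\ $\deg$ is not multiplicative in a way giving distinct roots, so one must argue more carefully: $\deg(\tilde\tau^n-\gamma)=p^{\text{(leading }\phi\text{-degree of }\tilde\tau^n-\gamma)}$, and since $v(\sigma)=0$ by Lemma \ref{lem:valsigma0} the constant term of $\sigma$ is a unit, so the $\phi$-degree of $\tilde\tau^n$ grows linearly in $n$ while $\gamma$ has $\phi$-degree $0$; hence for $n$ large the leading term is that of $\tilde\tau^n$ and $\deg(\tilde\tau^n-\gamma)$ is eventually a single exponential $\Lambda^n$ independent of $\gamma$, giving rationality. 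I expect the $\Ga$ case and the supersingular quaternionic case to be the main obstacles: in both, $\End(G)$ fails to be commutative (or the degree fails to be a "diagonalisable" multiplicative form), so one cannot quote Proposition \ref{prop:abvarh3} directly and must instead exploit that $v(\sigma)=0$ together with the explicit structure of inseparable degrees (Lemma \ref{lem:h2p1}) to pin down the leading behaviour of $\deg(\tilde\tau^n-\gamma)$.
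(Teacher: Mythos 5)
Your overall strategy---a case analysis on $G\in\{\Gm,\Ga,E\}$ with explicit degree formulas, killing the cross terms in $\sum_{\g\in\G_m}$ via $\sum_{\g\in\G_m}\g=0$---is the paper's, and the $\Gm$ and ordinary-elliptic cases are essentially right. But two steps, as written, do not close.

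First, in the $\Gm$ and $\Ga$ cases you deduce rationality of the zeta function from the degree sequence being ``linear recurrent'' or ``eventually a single exponential $\Lambda^n$''. That inference is invalid: \ref{h3} demands rationality of $\exp\bigl(\frac{1}{|\G_m|}\sum a_nz^n/n\bigr)$, which by the criteria in Section 2 requires the exact identity $a_n=\sum_i m_i\lambda_i^n$ with integer $m_i$ for \emph{every} $n\geqslant 1$; an ``eventually'' version only yields the zeta function up to a factor $\exp(\text{nonzero polynomial})$, which is never rational. The conclusion is saved because the exact identities do hold and should be recorded: for $\Ga$ one has $\deg(\sigma^{s_mn}-\g\g_m^n)=\deg(\sigma^{s_m})^n$ for \emph{all} $n$ (subtracting the constant $\g\g_m^n$ never disturbs the leading $\phi$-term, already at $n=1$; and if $\deg_{\phi}\sigma=0$, confinedness forces degree $1$ for all $n$), and for $\Gm$ one has $|\tau^n-\g\g_m^n|=\deg(\tau)^n-\g\g_m^n\sgn(\tau)^n$ exactly, not merely ``$+\,$bounded''.

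Second, the supersingular case. $\Aut(E)$ need not be cyclic (in characteristic $2$ or $3$ it can be nonabelian of order $24$ or $12$), so $\Gamma$ and $\sigma$ need not generate a commutative ring, and your substitution $\tilde\tau=\tau\g_m^{-1}$ with $\deg(\tilde\tau^n-\g)=\deg(\sigma^{s_mn}-\g\g_m^n)$ needs $\sigma^{s_m}$ and $\g_m$ to commute. Lemma \ref{lem:sigmagammacommute} supplies this only when $v(\sigma^{s_m}-\g_m)\geqslant N$, i.e.\ for $m\geqslant N$; you cannot ``arrange'' to be in that range, since \ref{h3} must be verified for each $m$ separately. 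The repair is to expand $\mathrm{Nrd}(\tau^n-\g\g_m^n)=\deg(\tau)^n+1-\mathrm{Trd}(\tau^n\overline{\g_m}^{\,n}\overline{\g})$ directly, with no $\tilde\tau$: when $m<N$ the group $\G_m$ is nontrivial and $\sum_{\g\in\G_m}\overline{\g}=0$ annihilates the trace terms, while when $m\geqslant N$ the group $\G_m$ is trivial and Lemma \ref{lem:sigmagammacommute} does give the commutativity needed to write the single remaining term as $\deg(\tau)^n+1-(\g_m\overline{\tau})^n-(\tau\overline{\g_m})^n$. This dichotomy is exactly how the paper's proof proceeds.
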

\begin{proof}
As in the proof of the previous proposition, we set $\tau:=\sigma^{s_m}$. If $G=\Gm$, then, identifying $\End(G)$ with $\Z$, we have
\begin{equation*}
\deg(\tau^n-\g\g_m^n)=|\tau^n-\g\g_m^n|=\deg(\tau)^n-\g\g_m^n\sgn(\tau)^n.
\end{equation*}
If $G=\Ga$, then 
\begin{equation*}
\deg(\tau^n-\g\g_m^n)=\deg(\tau)^n.
\end{equation*}
(This holds even when $\deg(\tau)=1$ since $\tau$ is confined.) Finally, if $G=E$ is an elliptic curve, then $\deg(\tau)=\tau\overline{\tau}$, where $\overline{\tau}$ denotes $\tau$ or the complex/quaternionic conjugate of $\tau$ depending on whether $\End(E)$ is $\Z$, an order in an imaginary quadratic number field or an order in a quaternion algebra over $\Q$. In either case
\begin{align*}
\sum_{\g\in\G_m}&\deg(\tau^n-\g\g_m^n) = \sum_{\g\in\G_m}\big((\tau\overline{\tau})^{\, n}-\g\g_m^n\overline{\tau}^{\, n}-\tau^n\overline{\g_m}^{\, n}\overline{\g}+1\big)\\
&= |\G_m|(\deg(\tau)^n+1)-\left(\sum_{\g\in\G_m}\g\right)\g_m^n\overline{\tau}^{\, n}-\tau^n\overline{\g_m}^{\, n}\left(\sum_{\g\in\G_m}\overline{\g}\right).
\end{align*}
Combining the three cases, we find that in case $\G_m$ is nontrivial, we get
\begin{equation}\label{eq:p1casesnontriv}
\sum_{\g\in\G_m}\deg(\tau^n-\g\g_m^n)=\begin{cases}
|\G_m|\deg(\tau)^n &\mbox{ if } G=\Gm \mbox{ or } G=\Ga;\\
|\G_m|(\deg(\tau)^n+1) &\mbox{ if } G=E,
\end{cases}
\end{equation}
whereas in case $\G_m$ is trivial, the elements $\tau$ and $\g_m$ commute by Lemma \ref{lem:sigmagammacommute}, and hence
\begin{align}\label{eq:p1casestriv}
\sum_{\g\in\G_m} \deg(\tau^n-\g\g_m^n) & =   \deg(\tau^n-\g_m^n) \\ & =\begin{cases}
\deg(\tau)^n-(\g_m\sgn(\tau))^n &\mbox{ if } G=\Gm;\\
\deg(\tau)^n &\mbox{ if } G=\Ga;\\
\deg(\tau)^n+1-(\g_m\overline{\tau})^n-(\tau\overline{\gamma_m})^n &\mbox{ if } G=E.
\end{cases} \nonumber
\end{align}
We may regard these formulas as equalities between complex numbers (for $G=E$ embedding the field $\Q(\gamma_m \overline{\tau})$ into $\C$).  It follows that the corresponding zeta function is rational.
\end{proof}

\begin{example}\label{example:cosepnotrootrat}
For an example where \ref{h3} does not hold, consider $K=\overline \F_3$, $G=\Ga\tm\Ga$, $\Gamma=\{1\}$, $V=G$  and 
$$ f \colon G\to G, (x,y) \mapsto (x^9+y^3, x^3). $$ 
Since the differential of $\sigma=f$ is zero, the map $f$ is even coseparable. One may directly compute the values of $\deg(\sigma^n-1)$. One way to do this is to write $$\sigma = \begin{pmatrix} \phi^2& \phi \\ \phi& 0\end{pmatrix}$$ with $\phi\colon \Ga \to \Ga$ the Frobenius map, and show that for a matrix $\tau$ in $\mathrm{M}_2(\F_3[\phi])$ with nonzero determinant the degree of $\tau$ as a map $\tau\colon G \to G$ and the degree in $\phi$ of $\det(\tau)\in \F_3[\phi]$ are related by the formula $$\deg(\tau)=3^{\deg_{\phi}(\det(\tau))}.$$ (This follows easily by writing $\tau$ in Smith normal form.) Computing the eigenvalues of $\sigma$ as Laurent series in $\phi^{-1}$, we get that $$\deg(\sigma^n-1) = \begin{cases} 9^n, & 2{\nmid} n;\\ 9^{n- |n|_3^{-1}},& 2{\mid}n,\end{cases}$$ and hence the zeta function satisfies the equation  
$$z\frac{d}{dz}\log \zeta_f(z)=\sum \deg(\sigma^n-1) z^n = \frac{9z}{1-81z^2} + H_{1/9} (81 z^2), $$
where $H_{1/9}(z)$ is the function from Lemma \ref{lem:NBBasic}. It follows that $\zeta_f(z)$ has a natural boundary along $|z|=1/9$ and \ref{h3} indeed fails to hold.

For a detailed computation of the degree and a general discussion of fixed points of endomorphisms of vector groups, we refer the reader to \cite{BCH}.
\end{example}

\subsection*{Hypothesis \ref{h4}}

\begin{proposition}\label{prop:h4p1}
A dynamically affine non-coseparable map on $\PP^1$ satisfies \ref{h4}.
\end{proposition}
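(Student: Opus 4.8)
The plan is to use the explicit formulas already obtained in the proof of Proposition~\ref{prop:h3p1} and simply read off the dominant root. Throughout, \ref{h2} is available by Lemma~\ref{lem:h2p1}. Since $f$ is not coseparable, Lemma~\ref{lem:sm}\ref{lem:smb} shows that all the $s_m$ exist; in particular the number $s=s_N$ of \ref{h4} exists, and $\tilde\gamma=\gamma_N$ is defined, so the first clause of \ref{h4} holds. By the remark following \ref{h4}, since moreover \ref{h3} holds (Proposition~\ref{prop:h3p1}), the sequence~\eqref{eq:domrootseq} is automatically linear recurrent, so it only remains to verify the dominant root assumption.

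Recall from the classification of $G$ at the start of Section~\ref{sec:disc} that $G$ is $\Gm$, $\Ga$, or an elliptic curve $E$. By the definition of $N$ in Notation~\ref{not:sm} we have $\Gamma_N=\{1\}$, so we are in the ``$\Gamma_m$ trivial'' case ($m=N$) of the proof of Proposition~\ref{prop:h3p1}: writing $\tau:=\sigma^s$, the elements $\tau$ and $\tilde\gamma$ commute (Lemma~\ref{lem:sigmagammacommute}, since $v(\tau-\tilde\gamma)\geqslant N$ by the definition of $s_N$), and equation~\eqref{eq:p1casestriv} gives
\[
\deg(\sigma^{sn}-\tilde\gamma^n)=
\begin{cases}
\deg(\tau)^n-\big(\tilde\gamma\,\sgn(\sigma^s)\big)^n, & G=\Gm;\\
\deg(\tau)^n, & G=\Ga;\\
\deg(\tau)^n+1-\big(\tilde\gamma\,\overline{\tau}\big)^n-\big(\tau\,\overline{\tilde\gamma}\big)^n, & G=E,
\end{cases}
\]
where for $G=E$ the conjugation and the relevant archimedean absolute value refer to an embedding into $\C$ of the commutative subfield of $\End(E)\otimes\Q$ generated by the commuting elements $\tau$ and $\tilde\gamma$; this subfield is $\Q$ or imaginary quadratic, hence embeds into $\C$.

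Put $d:=\deg(\tau)=\deg(\sigma)^s$. Since $\sigma$ is confined it is not an automorphism (an automorphism has finite order, which would force $\sigma^m-1=0$ for some $m$), so $\deg(\sigma)\geqslant 2$ when $G\in\{\Gm,E\}$; and when $G=\Ga$ a scalar endomorphism is separable, so a non-coseparable $\sigma$ has positive $\phi$-degree and $\deg(\sigma^s)\geqslant p$. In every case $d>1$. Now compare absolute values of the roots: for $G=\Ga$ the right-hand side is the single geometric term $d^n$; for $G=\Gm$ the other root $\tilde\gamma\,\sgn(\sigma^s)=\pm1$ has absolute value $1<d$; and for $G=E$, since $\tilde\gamma$ is an automorphism we have $\tilde\gamma\overline{\tilde\gamma}=\deg(\tilde\gamma)=1$, so $|\tilde\gamma|=1$ and $|\overline{\tau}|=|\tau|=\sqrt d$, whence the three other roots $1$, $\tilde\gamma\,\overline{\tau}$, $\tau\,\overline{\tilde\gamma}$ have absolute values $1$ and $\sqrt d$, all strictly less than $d$. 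Thus $d$ is the unique root of maximal absolute value, occurring with the constant nonzero multiplicity $1$, so \eqref{eq:domrootseq} satisfies the dominant root assumption and \ref{h4} holds.

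The only delicate point is the elliptic-curve case: one must ensure $\tau=\sigma^s$ and $\tilde\gamma$ genuinely commute, so that the conjugates in \eqref{eq:p1casestriv} live inside a number field embeddable in $\C$ and one may speak of their absolute values, and that the sub-dominant roots $\tilde\gamma\,\overline{\tau}$ and $\tau\,\overline{\tilde\gamma}$ have absolute value exactly $\sqrt d$ rather than coinciding with $d$. Both follow from $\deg(\tilde\gamma)=1$ together with $d>1$.
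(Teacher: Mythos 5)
Correct, and essentially the paper's own argument: the paper likewise deduces \ref{h4} by applying \eqref{eq:p1casesnontriv} and \eqref{eq:p1casestriv} at $m=N$ and observing that confinedness forces $\deg(\tau)\geqslant 2$, so that $\deg(\tau)$ is the unique dominant root. Your write-up merely makes explicit the absolute-value comparisons of the sub-dominant roots and the separate handling of $G=\Ga$, which the paper leaves implicit.
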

\begin{proof}
It follows directly from \eqref{eq:p1casesnontriv} and \eqref{eq:p1casestriv} applied for $m=N$ that $\deg(\tau)$ is the dominant root (note that for $G=\Gm$ and $G=E$, confinedness of $\sigma$ implies that $\deg(\tau)\geqslant 2$).
\end{proof}

We will now examine property \ref{h4} in the case where $G=A$ is an abelian variety.

\begin{proposition}\label{prop:h4av} Let $f$ be a dynamically affine map with $G=A$ an abelian variety. Assume that the hypothesis \ref{h2} is satisfied for a commutative ring $\RR\subseteq \End(A)$.  \begin{enumerate}[\textup{(}i\textup{)}] \item\label{prop:h4av1} The map $f$ satisfies $\ref{h4}$ if and only if $\sigma$ is not coseparable and the characteristic polynomial of the action of $\sigma$ on the $\ell$-adic Tate module $T_{\ell}(A)$, where $\ell$ is any prime $\ell \neq p$, has no roots of complex absolute value $1$.
\item\label{prop:h4av2} The map $f$ satisfies \ref{h4} if and only if the map $\sigma$, regarded as a dynamically affine map $\sigma\colon A\to A$, satisfies \ref{h4}.
\end{enumerate}
\end{proposition}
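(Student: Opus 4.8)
The plan is to strip Hypothesis \ref{h4} down to its essential content and then translate everything into a statement about the eigenvalues of $\sigma$ on the Tate module. Since $\RR$ is commutative, Proposition \ref{prop:abvarh3} shows that \ref{h3} holds, so by the remark following the statement of \ref{h4} the sequence \eqref{eq:domrootseq} is automatically linear recurrent as soon as $s$ exists. Thus \ref{h4} is equivalent to the conjunction of ``$s$ exists'' and ``$(\deg(\sigma^{sn}-\tilde{\g}^n))_{n\geqslant 1}$ satisfies the dominant root assumption''. By Lemma \ref{lem:sm}, $s$ exists if and only if $\sigma$ is not coseparable, which already accounts for one of the two conditions in \ref{prop:h4av1}. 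Moreover, $\sigma$ viewed as a dynamically affine map $\sigma\colon A\to A$ (with data $G=A$, $\Gamma=\{1\}$, $h=0$, $\iota=\mathrm{id}$) satisfies \ref{h2} with the same ring $\RR$, so once \ref{prop:h4av1} is proved for an arbitrary $f$ with $G=A$, applying it to both $f$ and this auxiliary map yields \ref{prop:h4av2}. Hence it suffices to prove \ref{prop:h4av1}.

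Assume therefore that $\sigma$ is not coseparable, so that $s=s_N$ and $\tilde{\g}=\g_N$ exist and, by Lemma \ref{lem:sigmagammacommute}, $\sigma^s$ and $\tilde{\g}$ commute. Put $\tilde{\tau}:=\sigma^s\tilde{\g}^{-1}\in\End(A)$ (using $\tilde{\g}^{-1}=\tilde{\g}^{\,\mathrm{ord}(\tilde{\g})-1}$). Since $\deg\tilde{\g}=1$ and all relevant maps commute, $\deg(\sigma^{sn}-\tilde{\g}^n)=\deg\big((\tilde{\tau}^n-1)\tilde{\g}^n\big)=\deg(\tilde{\tau}^n-1)$. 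Fix a prime $\ell\neq p$ and let $\xi_1,\dots,\xi_{2g}$ ($g=\dim A$) be the eigenvalues of $\sigma$ on $V_\ell(A)=T_\ell(A)\otimes\Q_\ell$, i.e.\ the roots of the characteristic polynomial of $\sigma$, which lies in $\Z[X]$. As $\sigma$ and $\tilde{\g}$ commute we may triangularise them simultaneously over $\overline{\Q_\ell}$; the finite order of $\tilde{\g}$ makes its eigenvalues $\zeta_i$ roots of unity, so $\tilde{\tau}$ has eigenvalues $\beta_i:=\xi_i^s\zeta_i^{-1}$, whence $|\beta_i|=|\xi_i|^s$ and in particular $|\beta_i|=1\iff|\xi_i|=1$. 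Using $\deg\phi=\det(\phi\mid V_\ell A)$ for an isogeny $\phi$, together with the fact that $\tilde{\tau}^n-1$ is an isogeny for all $n>0$ by Lemma \ref{lem:silvertwist}\ref{state:dynamicallyconfined}, we obtain
\begin{equation*}
a_n:=\deg(\sigma^{sn}-\tilde{\g}^n)=\prod_{i=1}^{2g}(\beta_i^n-1)\in\Z_{\geqslant 1}.
\end{equation*}
In particular $\beta_i^n\neq 1$ for all $n>0$, so no $\beta_i$ is a root of unity.

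Partition the indices according to $|\beta_i|\gtrless 1$ into $B_>$, $B_=$, $B_<$, and set $\Lambda:=\prod_{i\in B_>}\beta_i$. Expanding gives $a_n=\sum_{S}(-1)^{2g-|S|}\big(\prod_{i\in S}\beta_i\big)^n$. If $B_==\emptyset$, then $S=B_>$ is the unique subset with $\big|\prod_{i\in S}\beta_i\big|$ maximal, so $\Lambda$ is the unique root of maximal absolute value and it occurs with nonzero coefficient $(-1)^{|B_<|}$; hence the dominant root assumption holds. Conversely, suppose $B_=\neq\emptyset$ and pick $i_0\in B_=$. For $n>0$,
\begin{equation*}
a_n|\Lambda|^{-n}=\Big(\textstyle\prod_{i\in B_>}|1-\beta_i^{-n}|\Big)\Big(\textstyle\prod_{i\in B_=}|\beta_i^n-1|\Big)\Big(\textstyle\prod_{i\in B_<}|\beta_i^n-1|\Big),
\end{equation*}
where the first and third factors lie in a fixed compact subset of $(0,2^{2g}]$. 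Since $\beta_{i_0}$ lies on the unit circle and is not a root of unity, its powers are dense there, so $|\beta_{i_0}^n-1|$ and hence $a_n|\Lambda|^{-n}$ becomes arbitrarily small; thus $\liminf_n a_n|\Lambda|^{-n}=0$. On the other hand, $|a_n|\leqslant 2^{2g}|\Lambda|^n$ for all $n$, while $\limsup_n a_n|\Lambda|^{-n}>0$ because the middle factor is a non-negative, almost periodic, nowhere-zero sequence. If $(a_n)$ satisfied the dominant root assumption, its dominant root $\Lambda'$ would therefore have $|\Lambda'|=|\Lambda|$, and $|a_n|$ would be asymptotic to $|c|n^{d-1}|\Lambda|^n$ with $c\neq 0$, so $\liminf_n a_n|\Lambda|^{-n}>0$ — a contradiction. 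Hence the dominant root assumption fails precisely when some $|\beta_i|=1$, i.e.\ when some $|\xi_i|=1$. Combining this with Lemma \ref{lem:sm}, $f$ satisfies \ref{h4} if and only if $\sigma$ is not coseparable and the characteristic polynomial of $\sigma$ on $T_\ell(A)$ has no root of complex absolute value $1$, which is \ref{prop:h4av1}; applying this to the auxiliary map $\sigma\colon A\to A$ gives \ref{prop:h4av2}.

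The main obstacle is the last step of the third paragraph: showing that a single eigenvalue of absolute value $1$ destroys the dominant root assumption. The bookkeeping separating the three modulus regimes is routine, but it is essential to feed in that the offending eigenvalues are not roots of unity — which is exactly where confinedness of $\sigma$ (through Lemma \ref{lem:silvertwist}\ref{state:dynamicallyconfined}) enters. The other ingredients — reducing \ref{h4} to the dominant root assumption, rewriting $\deg(\sigma^{sn}-\tilde{\g}^n)$ as $\prod_i(\beta_i^n-1)$, and deducing \ref{prop:h4av2} from \ref{prop:h4av1} — are straightforward.
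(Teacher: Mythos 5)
Your proof is correct and follows essentially the same route as the paper's: reduce to the non-coseparable case via Lemma \ref{lem:sm}, express $\deg(\sigma^{sn}-\tilde{\gamma}^n)$ as a product over the Tate-module eigenvalues of $\sigma$ twisted by the roots of unity coming from $\tilde{\gamma}$, and decide the dominant root assumption by whether any eigenvalue lies on the unit circle. The only difference is that you write out in full the dominant-root dichotomy that the paper delegates to \cite[Prop.~5.1(v)]{D1}; your one terse step, that $\limsup_n a_n|\Lambda|^{-n}>0$, is easily completed by equidistribution (for each $i\in B_{=}$ the set of $n$ with $|\beta_i^n-1|<\varepsilon$ has density $O(\varepsilon)$).
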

\begin{proof} By Lemma \ref{lem:sm} it suffices to treat the case where $\sigma$ is non-coseparable and the number $s$ exists.

\ref{prop:h4av1} We have the formula \begin{equation}\label{eqn:prop:h4av} \deg(\sigma^{sn}-\tilde{\gamma}^n)= \prod_{j=1}^l \left(\lambda_j(\sigma)^{sn}-\lambda_j(\tilde{\gamma})^n\right)^{\mu_j}.\end{equation} Since $\Gamma$ is finite and $\sigma$ is confined, the elements $\lambda_j(\tilde{\gamma})$ are roots of unity while $\lambda_j(\sigma)$ are not. By the discussion in \cite[Section 5]{D1}, the elements $\lambda_j(\sigma)$ are exactly the roots of the action of $\sigma$ on $T_{\ell}(A)$. Expanding the expression in (\ref{eqn:prop:h4av}), one can show that $\deg(\sigma^{sn}-\tilde{\gamma}^n)$ is given by a linear recurrence, and that the dominant root is unique if and only if $|\lambda_j(\sigma)|\neq 1$ for all $j$. (The argument is identical to the one where $\tilde{\gamma}=1$ as given in \cite[Prop.~5.1(v)]{D1}.)

\ref{prop:h4av2} This is clear since the condition in \ref{prop:h4av1} depends neither on $s$ nor on $\tilde{\gamma}$.
\end{proof}

\begin{example}
For an example where \ref{h4} fails, let $G=\Gm^4$, $\G=\{1\}$ and $V=G$. We choose $f=\sigma\in\End(G)\cong \mathrm{M}_4(\Z)$ to be the companion matrix of the minimal polynomial $g$ of a Salem number $\alpha>1$ of degree $4$ (e.g.\ $g=x^4-3x^3+3x^2-3x+1$ \cite{Smyth}). Then $\deg(\sigma^n-1)=|\det(\sigma^n-1)|$ and if $\beta\in\C$ is a zero of $g$ with absolute value $1$, then $\alpha$ and $\alpha\beta$ are distinct dominant roots of the linear recurrent sequence $(\deg(\sigma^n-1))_{n\geqslant 1}$.
\end{example}

\subsection*{Proofs of Theorems \ref{thm:main} and \ref{higherdim}} 

\begin{proof}[Proof of Theorem \ref{thm:main}] The theorem will follow by combining Theorems \ref{thm:nb} and \ref{thm:tame} with the following observations. 

A dynamically affine map $f\colon\PP^1 \to \PP^1$ satisfies the hypotheses \ref{h1}--\ref{h3} by Corollary \ref{cor:h1p1}, Lemma \ref{lem:h2p1} and Proposition \ref{prop:h3p1}. If $f$ is not coseparable, it satisfies \ref{h4} by Proposition \ref{prop:h4p1}. If $f$ is coseparable, the function $\zeta_f(z)$ is rational by Proposition \ref{prop:cosepcase}. \end{proof}

\begin{proof}[Proof of Theorem \ref{higherdim}]
In this situation, \ref{h1}--\ref{h3} hold by Corollary \ref{cor:h1p1} and Propositions \ref{prop:h2general}\ref{state:orda} and \ref{prop:abvarh3}. If $p{\mid}m$, the map $\sigma$ is coseparable, and $\zeta_f(z)$ is rational. If $p{\nmid}m$, $\sigma$ is not coseparable, and $f$ satisfies \ref{h4} by Proposition \ref{prop:h4av}. 
\end{proof} 
 
\appendix

\section{\mbox{ } \\ Radius of convergence of \texorpdfstring{$\zeta_f$}{} for dynamically affine maps \texorpdfstring{$f$}{}} \label{conv} 

In general, the existence of a positive  radius of convergence of a dynamical zeta function is a nontrivial property of the growth of the number of periodic points of a given order. In the manifold setting, this is studied in \cite{AM}; Kaloshin showed that a positive convergence radius is \emph{not} topologically Baire generic in the $C^r$ topology for any $2 \leqslant r < + \infty$ \cite[Corollary 1]{Kaloshin}.

In this appendix, we study this problem for a morphism $f \colon V \rightarrow V$ on an algebraic variety $V$. We can say something in case $f$ is dynamically affine, or in case $V$ is smooth projective, but we do not know what happens in the general case. 

\begin{theorem}  Let $f \colon V \rightarrow V$ denote a dynamically affine map over an algebraically closed field $K$ of characteristic $p$, satisfying \textup{\textbf{(H1)}}. Then the zeta functions $\zeta_f(z)$ and $\zeta^*_f(z)$ converge to holomorphic functions on a nontrivial open disk centred at the origin. \end{theorem}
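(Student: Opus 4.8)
The plan is to reduce both assertions to the single claim that $f_n$ grows at most exponentially in $n$. Indeed, if $f_n\leqslant A\cdot R^n$ for some constants $A,R>0$, then the power series $\sum_{n\geqslant 1}f_n z^n/n$ converges absolutely on the disc $\{|z|<1/R\}$ to a holomorphic function vanishing at the origin, and so does its subseries $\sum_{p\nmid n}f_n z^n/n$; since $\zeta_f(z)$ and $\zeta_f^*(z)$ are the exponentials of these two series, they are holomorphic on that disc. (One can also deduce the statement for $\zeta_f^*(z)$ from the one for $\zeta_f(z)$ and $\zeta_{f^p}(z)$ via \eqref{eq:tameexpand}, but the direct route above avoids discussing whether $f^p$ also satisfies \ref{h1}.) So everything comes down to an exponential upper bound for $(f_n)$.

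First I would apply Lemma \ref{lem:key}, which gives the decomposition $f_n=(f|_C)_n+\frac{1}{|\Gamma|}\sum_{\gamma\in\Gamma}\#\ker(\sigma^n-\gamma)$. By hypothesis \ref{h1} the zeta function of $f|_C$ is rational, hence $(f|_C)_n$ is a linear recurrent sequence and in particular $(f|_C)_n=O(R_0^n)$ for some $R_0>0$. Thus it remains to bound $\#\ker(\sigma^n-\gamma)$ exponentially in $n$ for each of the finitely many $\gamma\in\Gamma$. Here I would reuse the argument from the proof of Lemma \ref{lem:silvertwist}\ref{state:dynamicallyconfined}: writing $\beta=\alpha^n$, the partial products $\beta^{k-1}(\gamma)\cdots\beta(\gamma)\gamma$ lie in the finite group $\Gamma$, so by pigeonhole there is $d=d(n,\gamma)\in\{1,\dots,|\Gamma|\}$ with $\beta^{d-1}(\gamma)\cdots\beta(\gamma)\gamma=1$, whence $\ker(\sigma^n-\gamma)\subseteq\ker(\sigma^{dn}-1)$. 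Therefore $\#\ker(\sigma^n-\gamma)\leqslant\max_{1\leqslant d\leqslant|\Gamma|}\#\mathrm{Fix}\big((\sigma^d)^n\big)$, and since each $\sigma^d$ is again a confined isogeny, the problem reduces to the following: \emph{for any confined isogeny $\tau$ of a connected commutative algebraic group $G$, the sequence $\#\mathrm{Fix}(\tau^n)$ grows at most exponentially.}

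For this I would argue as follows. Confinedness of $\tau$ forces $\tau^n-1$ to have finite kernel, hence to be an isogeny (as $G$ is connected), so by \eqref{eq:kerdegs} we have $\#\mathrm{Fix}(\tau^n)=\#\ker(\tau^n-1)\leqslant\deg(\tau^n-1)$, and it suffices to bound the degree. By the Barsotti--Chevalley structure theorem \cite{Chevalley, ConradChev}, $G$ is an extension of an abelian variety $A$ by a connected linear group, which over the algebraically closed field $K$ decomposes further as a product of a torus $T$ and a connected commutative unipotent group $U$; all of these subgroups are $\tau$-stable, $\tau$ confined forces the induced maps on $A$, $T$, $U$ to be confined, and lengths of kernels of isogenies are multiplicative along such extensions, so $\#\mathrm{Fix}(\tau^n)$ is bounded by the product of the corresponding quantities for $A$, $T$, $U$. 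For $A$ and $T$ one has $\deg(\tau^n-1)=\prod_i|\alpha_i^n-1|^{e_i}$, where the $\alpha_i$ are the eigenvalues of $\tau$ on the $\ell$-adic Tate module (for $A$) resp.\ on the cocharacter lattice (for $T$); these are fixed complex numbers, so $\deg(\tau^n-1)\leqslant\prod_i(|\alpha_i|+1)^{e_i n}$. For the unipotent factor $U$ in characteristic $p$ one uses the analysis of fixed points of endomorphisms of commutative algebraic groups in \cite{BCH}; concretely, for $U=\Ga^d$, writing $\tau$ as a matrix $\Theta$ over the Ore domain $\mathrm{End}(\Ga)=K\langle\phi\rangle$ and using the non-commutative Smith normal form, $\#\ker(\tau^n-1)\leqslant p^{\deg_\phi\det(\Theta^n-1)}\leqslant p^{cdn}$ with $c=\max_{i,j}\deg_\phi\Theta_{ij}$. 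Multiplying these bounds gives $\#\mathrm{Fix}(\tau^n)=O(R_1^n)$, and combined with the bound on $(f|_C)_n$ this yields $f_n=O(R^n)$, completing the argument. (Alternatively one can cite \cite{BCH} directly for the whole reduced claim, since it expresses $\#\mathrm{Fix}(\tau^n)$ as a linear recurrent sequence modified by bounded periodic corrections, which is visibly $O(R_1^n)$.)

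The main obstacle is precisely this last step: a uniform exponential bound on $\#\mathrm{Fix}(\tau^n)=\#\ker(\tau^n-1)$ for a confined isogeny of an \emph{arbitrary} connected commutative algebraic group. Everything preceding it is formal bookkeeping around Lemma \ref{lem:key} and Lemma \ref{lem:silvertwist}; the genuine work lies in the dévissage via the structure theorem and, above all, in the unipotent case in characteristic $p$, where $\mathrm{End}(U)$ is noncommutative and the naive eigenvalue estimate must be replaced by a degree-in-$\phi$ bound. This is also the reason the hypothesis \ref{h1} cannot be dispensed with: it is exactly what removes the non-group part $f|_C$, to which none of the algebraic-group machinery applies.
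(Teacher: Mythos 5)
Your proposal is correct and follows essentially the same route as the paper: reduce everything to an exponential bound $f_n\leqslant c^n$ via Lemma \ref{lem:key} and \ref{h1}, then pass through Chevalley's structure theorem to abelian varieties, tori and unipotent groups, bounding $\#\ker$ by degrees computed via norms, integer determinants and Dieudonn\'e determinants respectively. The one step you elide by deferring to \cite{BCH} is the reduction of a \emph{general} connected commutative unipotent group to vector groups $\Ga^r$ (the paper does this via the filtration $G\supseteq pG\supseteq p^2G\supseteq\cdots$ and Serre's results on Witt vector groups), and your preliminary reduction of $\ker(\sigma^n-\gamma)$ to $\ker(\sigma^{dn}-1)$ with $d\leqslant|\Gamma|$ is a harmless variant of the paper's direct treatment of $\tau=\sigma^n\gamma^{-1}$.
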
 

\begin{proof} 
It follows from the definitions that $\zeta_f^*(z)$  converges whenever $\zeta_f(z)$ does. The latter function converges for $|z|<1/\limsup \sqrt[n]{f_n}$. Hence to prove the statement, it suffices to prove that $f_n \leqslant c^n$ for some constant $c$. By Lemma \ref{lem:key}, it suffices to prove that $(f|_C)_n \leqslant c^n$ and $\# \ker (\sigma^n-\gamma) \leqslant c^n$ for all $\gamma \in \Gamma$ and some constant $c$ (independent of $n$). The first statement follows immediately from \textup{\textbf{(H1)}}.

For the second statement, we note that $\# \ker (\sigma^n-\gamma)  = \# \mathrm{Fix}(\tau)$, where $\tau = \sigma^n \gamma^{-1}$.
The main point in the proof is to reduce to the case of $G$ being an abelian variety, a torus, or a vector group, by a method similar to the one employed in \cite{BCH}. Here, we give a more ad hoc discussion (avoiding cohomology) and simplify matters using the commutativity of $G$. 

We first observe that if $N$ is a connected
 normal algebraic subgroup of $G$ stable under $\End(G)$, then $\tau$ induces an endomorphism $\tau_{G/N}$ of $G/N$. We claim that $\tau_{G/N}$ is confined  and that \begin{equation} \label{decfix} \# \mathrm{Fix}(\tau) = \# \mathrm{Fix}(\tau|_N) \cdot \# \mathrm{Fix}(\tau_{G/N}). \end{equation} 
To see this, we first note that by Lemma \ref{lem:silvertwist}\ref{state:autotwist} powers $\tau^k$ of $\tau$ are of the form $\gamma'\sigma^{nk}$ for some $\gamma'\in \Gamma$, and hence by Lemma \ref{lem:silvertwist}\ref{state:dynamicallyconfined} $\tau$ is confined. Since $N$ is connected and the map $\tau$ is confined, we get that $\tau|_N-1$ is an isogeny (in particular, it is surjective), which implies that the map $ \mathrm{Fix}(\tau)  \rightarrow \mathrm{Fix}(\tau_{G/N})$ is surjective as well. Applying this to $\gamma=1$ shows that the map $\sigma_{G/N}$ is confined, and we get an exact sequence of finite groups 
\begin{equation*} 0 \rightarrow \mathrm{Fix}(\tau|_N)  \rightarrow \mathrm{Fix}(\tau)  \rightarrow \mathrm{Fix}(\tau_{G/N}) \rightarrow 0. \end{equation*} 
Notice also that $\tau|_N = \sigma|_N^n \gamma|_N^{-1}$ and  $\tau_{G/N} = \sigma_{G/N}^n \gamma_{G/N}^{-1}$ admit the same decomposition as $\tau$ with $\sigma|_N$ (resp., $\sigma_{G/N}$) being a confined isogeny on $N$ (resp., $G/N$).

We apply (\ref{decfix}) several times: first, by Chevalley's structure theorem for algebraic groups \cite[Thm.~1.1]{ConradChev}, $G$ has a unique normal connected linear algebraic subgroup $N$ such that $G/N$ is an abelian variety.  Then $N$ is stable by $\End(G)$, since there are no nontrivial morphisms from a linear algebraic group $N$ to an abelian variety $A$ \cite[Lem.~2.3]{ConradChev}. Now suppose $G$ is a connected commutative linear algebraic group; then there exists a normal connected unipotent algebraic subgroup $U$ of $R$ such that the quotient $R/U$ is a torus $T$, i.e.\ isomorphic to $\Gm^s$ for some $s\in\Z_{\geqslant 0}$ \cite[Thm.~16.33]{MilneAG}. There are no nontrivial morphisms $U\to T$ \cite[Cor.~IV.2.2.4]{DGGA}, so $U$ is preserved by any endomorphism of $R$. Now if $G$ is connected commutative unipotent, it is  isogenous to a direct product $W_1\tm\cdots\tm W_t$ of additive groups of truncated Witt vectors \cite[Thm.~VII.1]{Serre}. Since $p^{d} W_i=0$ for some $d$, we obtain a decomposition series of $G$ (using \cite[Prop.~IV.2.2.3]{DGGA}) $G\supseteq pG\supseteq p^2G\supseteq\dots\supseteq 0,$
in which $pG$ is preserved by any endomorphism of $G$, and each successive quotient is a connected commutative unipotent algebraic group of exponent $p$. By \cite[Prop.~VII.11]{Serre}, such a group is isomorphic to a vector group $\Ga^r$ for some $r\in\Z_{\geqslant 0}$. 

By the above discussion, we are reduced to considering the following three cases. In each of these cases, $G$ is connected commutative, $\End(G)$ is a ring with a degree function $\deg \colon \End(G) \rightarrow \N\cup\{-\infty\}$ and $\# \ker (\sigma^n-\gamma) \leqslant \deg(\sigma^n-\gamma)$, so it suffices to prove that in each of these cases $\deg(\sigma^n-\gamma)$ grows at most exponentially in $n$.

\begin{itemize}[leftmargin=*,labelindent=0.4cm]
\item[$-$]{\bf $G$ is an abelian variety}:  $G$ is isogenous to a product of simple abelian varieties, and $\deg(\sigma^n-\gamma)$ becomes a product of reduced norms $N(\sigma_i^n-\gamma_i)$ on finitely many simple $\Q$-algebras $R_i$ (with $\tau_i \in R_i$ and $\gamma_i \in R_i^{\tm}$) \cite[Prop.~2.3]{D1}. Passing to the algebraic closure of $\Q$, one easily sees that these satisfy a linear recurrence in $n$, and hence grow at most exponentially. 
\item[$-$] {\bf $G \simeq \Gm^s$ is a torus}: Identifying endomorphisms of $G$ with matrices in $\mathrm{M}_s(\Z)$, one sees (e.g.\ by using the Smith normal form) that $$\deg(\sigma^n-\gamma) = |\det(\sigma^n-\gamma) |. $$ Expanding the determinant shows the desired growth behaviour. 
\item[$-$] {\bf $G \simeq \Ga^r$ is a vector group}: Endomorphisms of $G$ are given by $r \tm r$ matrices over the skew polynomial ring $K\langle \phi \rangle$ with $\phi a = a^p \phi$ for $a \in K$. The degree of an isogeny $\tau \in \End(G)$ can be computed using the Dieudonn\'e determinant $\mathrm{ddet}$ 
by the formula $$\deg(\tau)=p^{\deg_{\phi} \mathrm{ddet}(\tau)}.$$ 
(Since $K\langle\phi\rangle$ is left and right euclidean, we can put the matrix $\tau$ in Smith normal form and use the fact that unimodular matrices have Dieudonn\'e determinant of degree $0$ \cite[Thm.~4.6]{GiesbrechtKim}). 
We will use that if $\tau$ is a matrix over $K\langle \phi \rangle$ all of whose entries have degree $\leqslant d$ as polynomials in $\phi$, then $\deg_{\phi} \mathrm{ddet}(\tau) \leqslant rd$  \cite[Thm.~3.5]{GiesbrechtKim}. 
Choose an integer $d\geqslant 1$ so that the entries of $\sigma$ have degree $\leqslant d$. For sufficiently large $n$ the entries of $\sigma^n-\gamma$ have degree $\leqslant nd$, and hence \[\deg(\sigma^n-\gamma) = p^{\deg_{\phi} \mathrm{ddet}(\sigma^n-\gamma)}\leqslant p^{nrd}. \qedhere \]
\end{itemize} \end{proof}

\begin{remark} For a more comprehensive treatment of degrees and inseparable degrees of endomorphisms of algebraic groups (not necessarily commutative), we refer the reader to \cite{BCH}.
\end{remark}

In the above proof, the positive radius of convergence of $\zeta_f(z)$ is recursively defined based on a decomposition of $G$ along subgroups and quotients. The following is a case where we can find a direct bound on the radius of convergence: 

\begin{proposition} \label{smoothproj} When $V$ is \emph{smooth projective} and $f \colon V \rightarrow V$ is any morphism, $\zeta_f(z)$ and $\zeta^*_f(z)$ define holomorphic functions on 
a disk of radius the smallest absolute value of a zero of the characteristic polynomial $$\det (1-f^*z \mid \mathrm{H}^{2 \bullet}(V)),$$ where $\mathrm{H}^{2\bullet}(V) = {\displaystyle{\bigoplus\limits_{j=0}^{\dim V}}} \mathrm{H}^{2j}(V,\Q_\ell)$ is the even \'etale cohomology of $V$ for some $\ell \neq p$. 
\end{proposition}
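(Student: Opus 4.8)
The plan is to express $\zeta_f(z)$ and $\zeta_f^*(z)$ in terms of the Lefschetz fixed-point formula and then bound the radius of convergence by the location of the poles of the rational ``virtual'' zeta function coming from even-degree cohomology. Since $V$ is smooth projective and $f\colon V\to V$ is a morphism, the graph of $f^n$ meets the diagonal transversally only in the sense that we must be a little careful: $f_n$ is the number of fixed points of $f^n$, which in general is at most the intersection number $\sum_j (-1)^j \operatorname{tr}(f^{n*}\mid \mathrm{H}^j(V,\Q_\ell))$, with equality when the fixed points are all simple. What we actually need is only an inequality: first I would observe that $f_n \leqslant \#\{\text{geometric points fixed by }f^n\}$ counted without multiplicity, and that this is bounded above by the total intersection multiplicity $L(f^n) := \sum_j (-1)^j \operatorname{tr}(f^{n*}\mid \mathrm{H}^j)$ provided all local intersection multiplicities are $\geqslant 1$, which holds here since fixed points of a morphism always contribute non-negative multiplicity and a genuine fixed point contributes at least $1$. (If one is worried about the sign contributions of the odd cohomology, one replaces the Lefschetz number by the sum of absolute values of eigenvalues, or equivalently one bounds $f_n$ by the number of $\F_{q^n}$-points of a model after spreading out; either way one only needs a crude exponential bound.)

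The cleanest route is the following. By the Weil conjectures (Deligne), the eigenvalues of $f^*$ on $\mathrm{H}^j(V,\Q_\ell)$ are algebraic numbers, and more importantly the eigenvalues on $\mathrm{H}^{2j}$ include the ``top'' contributions governing growth: on $\mathrm{H}^{2j}$ the relevant eigenvalue of largest modulus among all $j$ is exactly the reciprocal of the smallest root of $\det(1-f^*z\mid \mathrm{H}^{2\bullet}(V))$. I would argue that $f_n = O(\rho^{-n})$ where $\rho$ is that smallest absolute value, by showing $f_n$ is bounded by a fixed polynomial-in-$n$ multiple of $\max_j |\lambda_{j,\max}|^n$ over \emph{all} cohomological degrees, and then noting that the even-degree eigenvalues dominate the odd-degree ones in modulus. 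The last point is where the hard work sits: for a general morphism (not Frobenius) one does not have a purity statement that pins down the absolute values of eigenvalues on $\mathrm{H}^j$ to be $q^{j/2}$. The key observation to push through is that for any morphism $f$ of a smooth projective variety, the spectral radius of $f^*$ on $\mathrm{H}^{j}$ is at most the spectral radius on $\mathrm{H}^{2\lceil j/2\rceil}$ or $\mathrm{H}^{2\lfloor j/2\rfloor}$ — this follows from the hard Lefschetz theorem together with the fact that cup product with a power of an $f$-equivariant (up to scalar) ample class interlaces the cohomology groups, forcing eigenvalue moduli in odd degree to be squeezed between those in adjacent even degrees. Concretely, hard Lefschetz gives isomorphisms $L^{d-j}\colon \mathrm{H}^j \xrightarrow{\sim} \mathrm{H}^{2d-j}$ intertwining $f^*$ up to the scalar by which $f^*$ acts on $\mathrm{H}^2$, and combined with the Lefschetz decomposition this bounds the primitive part of $\mathrm{H}^j$ by even-degree data.

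So the steps, in order, are: (i) reduce the convergence statement to the estimate $f_n \leqslant C\cdot n^{b}\cdot \rho^{-n}$ for constants $C,b$, via $1/\limsup\sqrt[n]{f_n}\geqslant \rho$ and the elementary fact that $\zeta_f^*$ converges wherever $\zeta_f$ does (as already used in the preceding theorem); (ii) bound $f_n$ by the Lefschetz-type quantity $\sum_j \dim \mathrm{H}^j(V,\Q_\ell)\cdot r_j^n$ where $r_j$ is the spectral radius of $f^*$ on $\mathrm{H}^j$, using that each fixed point contributes non-negatively to the local terms of the Lefschetz trace formula in étale cohomology for the morphism $f^n$; (iii) invoke hard Lefschetz and the Lefschetz decomposition to show $\max_j r_j = \max_j r_{2j}$, i.e. that the even cohomology controls the growth; (iv) identify $\max_j r_{2j}$ with $\rho^{-1}$, the reciprocal of the smallest modulus of a root of $\det(1-f^*z\mid \mathrm{H}^{2\bullet}(V))$. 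I expect step (ii) — making the Lefschetz fixed-point count into a clean one-sided inequality valid for an arbitrary morphism with possibly non-reduced fixed locus — and step (iii) — the hard Lefschetz squeezing argument for eigenvalue moduli of a non-Frobenius endomorphism — to be the two main obstacles; step (iii) in particular requires care because $f^*$ need not preserve the ample class, only scale it, so one must track that scalar through the decomposition.
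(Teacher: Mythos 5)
Your steps (i)--(ii) reproduce the paper's starting point: $0\leqslant f_n\leqslant(\Gamma_{f^n}\cdot\Delta)$, the intersection number of the graph of $f^n$ with the diagonal, which by the Grothendieck--Lefschetz trace formula equals $\sum_i(-1)^i\operatorname{tr}(f^{n*}\mid \mathrm{H}^i(V,\Q_\ell))$. The gap is in step (iii). The claim that the spectral radius of $f^*$ on odd cohomology is squeezed between the spectral radii on adjacent even degrees does not follow from hard Lefschetz: the hard Lefschetz isomorphisms $L^{d-j}\colon \mathrm{H}^j\xrightarrow{\sim}\mathrm{H}^{2d-j}$ preserve the parity of the degree, so they only compare odd with odd and even with even; moreover, for a general morphism $f$ the ample class is not $f^*$-equivariant even up to scalar, so $f^*$ need not respect the Lefschetz decomposition at all. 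The comparison $\max_j r_j=\max_j r_{2j}$ for arbitrary (surjective) endomorphisms in positive characteristic is a genuinely deep theorem (it is essentially Truong's result on dynamical degrees over arbitrary fields, the analogue of Serre's K\"ahler argument, which in characteristic zero uses Hodge theory), and it cannot be obtained by the formal interlacing argument you sketch. As written, step (iii) would fail, and with it your identification of $\limsup f_n^{1/n}$ with the even spectral radius.

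The point you are missing is that no comparison between odd and even eigenvalues is needed. Writing $a_n:=(\Gamma_{f^n}\cdot\Delta)$, positivity of local intersection multiplicities gives $a_n\geqslant f_n\geqslant 0$, so the coefficients $c_n$ of $\exp\bigl(\sum a_n z^n/n\bigr)$ satisfy $c_n\geqslant a_n/n\geqslant f_n/n\geqslant 0$. But $\exp\bigl(\sum a_n z^n/n\bigr)=\prod_i\det(1-f^*z\mid \mathrm{H}^i)^{(-1)^{i+1}}$ is a rational function whose \emph{denominator} is exactly $\det(1-f^*z\mid \mathrm{H}^{2\bullet}(V))$; the odd-degree factors sit in the numerator, which is a polynomial and therefore contributes no singularities. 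Hence $\limsup c_n^{1/n}\leqslant\rho^{-1}$ with $\rho$ the smallest modulus of a zero of the denominator, and the coefficient-wise bounds immediately give $\limsup f_n^{1/n}\leqslant\rho^{-1}$, which is the assertion. This is how the paper argues, and it makes your steps (iii) and (iv) unnecessary.
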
 

\begin{proof} 
In this case, we have a coefficient-wise bound $f_n \leqslant (\Gamma_{f^n} \cdot \Delta)$, where the right hand side is the intersection number of the graph of $f^n$ with the diagonal in $V \times V$. Since $V$ is smooth projective and $f$ has finitely many fixed points, by the Grothendieck--Lefschetz fixed point formula in $\ell$-adic cohomology for $\ell \neq p$ \cite[Cor.~3.7, p.\ 152 (= Expos\'e ``Cycle'', p.\ 24)]{SGA412} we  find that \begin{equation} \label{GLfpf} \exp \left( \sum (\Gamma_{f^n} \cdot \Delta) z^n/n \right) = \prod_{i=0}^{2 \dim V} \det (1-f^*z \mid \mathrm{H}^i(V,\Q_\ell))^{(-1)^{i+1}}, \end{equation}
and the right hand side converges in an open disk of radius the smallest absolute value of a zero of the denominator. 
\end{proof} 

\begin{remark} If $V=\PP^k$ is a projective space, the result follows essentially from B\'ezout's theorem (see e.g.\ \cite[Prop.~1.3]{Sibony}). A more general ``intersection-theoretic'' argument such as in the proof of Proposition \ref{smoothproj} seems to apply only in a restrictive setting, since the Grothendieck--Lefschetz fixed point formula can fail for general endomorphisms of general varieties, and one cannot in general leave out the assumptions of properness and smoothness. It seems these assumptions are rarely satisfied, as witnessed by the following sample result in characteristic zero: if $\Gamma$ is a finite group of endomorphisms of a complex abelian variety of dimension $\geqslant 3$ acting irreducibly on the tangent space at $0$, $\Gamma \backslash G$ is necessarily a projective space, $G$ is a power of an elliptic curve, and $\Gamma$ is one of two possible groups (depending on the dimension) \cite[Thm.\ 1.1]{ALA}. 
\end{remark} 

\section{\mbox{ } \\ Explicit computation of tame zeta functions\\ for some dynamically affine maps on \texorpdfstring{$\PP^1$}{}} 
\label{lois}

\subsection*{Classification of dynamically affine maps on $\PP^1$} Bridy \cite{BridyBordeaux} classified all dynamically affine maps $f$ of degree $\geqslant 2$ on the projective line by showing that they are conjugate by a fractional linear transformation to  polynomials $f_c$ in an explicit standard form, as in Table \ref{tab:da} (where $\mu_d\subseteq k^{\tm}$ denotes a nontrivial subgroup consisting of $d$-th roots of unity). This is the characteristic-$p$ analogue of the classification over $\C$ given in \cite[Thm.~3.1]{MilnorBodilpaper}. \\[-1cm]

\begin{table}[ht] 
\centering
\def\arraystretch{1.5}
\begin{tabular}{|c|c|c|c|c|}
\multicolumn{4}{c}{}\\
\hline
$G$ & $\G$ & $\Gamma\backslash G$ & $f_c$ \\
\hline
\multirow{2}{*}{$\Ga$} & $\{1\}$ & \multirow{2}{*}{${\PP^1}- \{\infty\}$} & Additive polynomial \\
& $\mu_d$ & & Subadditive polynomial \\
\hline
\multirow{2}{*}{$\Gm$} & $\{1\}$ & ${\PP^1} - \{0,\infty\}$ & Power map \\
& $\{\pm 1\}$ & ${\PP^1} - \{\infty\}$ & Chebyshev map \\
\hline
$E$ & $\neq\{1\}$ & $\PP^1$ & Latt\`es map\\
\hline
\end{tabular}
\caption{Classification of dynamically affine maps on $\PP^1$.}
\label{tab:da} 
\end{table}

\mbox{ } \\[-15mm]

\noindent With the notation and terminology of Table \ref{tab:da}, $f$ is coseparable precisely when either $f_c$ is inseparable, or $f_c$ is a separable (sub)additive polynomial for which $f_c'(0)$ is transcendental over $k$ (cf.\ \cite[Thm.~1.2 \& 1.3]{BridyBordeaux}). One easily checks that these are precisely the maps for which $f_n$ is \emph{maximal} for all $n$ (i.e.\ $f_n = \deg(f)^n+1$; each fixed point of $f^n$ has multiplicity one).

\subsection*{Some examples of tame zeta functions} The ``trivial'' case provides us with a useful notational tool:  if $X=\mathrm{pt}$ is a point, then $f$ has a unique fixed point ($f_n=1$ for all $n$), so we suppress the (irrelevant) $f$ from the notation, to obtain $$\zeta_{\mathrm{pt}}(z)=\frac{1}{1-z} \qquad \mbox{ and }\qquad  \zeta^*_{\mathrm{pt}}(z) = \zeta_{\mathrm{pt}}(z)/\sqrt[p]{\zeta_{\mathrm{pt}}(z^p)} = \frac{\sqrt[p]{1-z^p}}{1-z}. $$
We will now present examples of tame zeta functions, writing them in a concise form using the function $\zeta^*_{\mathrm{pt}}(az^b)$ for various $a$ and $b$. 
Much of the general structure of (tame) zeta functions of algebraic groups is already visible in the following basic example for which we provide a detailed computation (we stick to $p>2$ for convenience).

\begin{proposition} \label{zm}  Let $m\geqslant 2$ be an integer and let $f \colon \PP^1 \rightarrow \PP^1, x \mapsto x^m$ be the power map over an algebraically closed field $K$ of characteristic $p>2$. If $p$ divides $m$, set $\beta:=0$. Otherwise, let $\beta:= (|m^s-1|_p-1)/s \in \Z[1/p]$, where $s$ is the smallest positive  integer for which $|m^s-1|_p<1$ \textup{(}i.e.\ the order of $m$ in $\F_p^{\tm}$\textup{)}. Then 
$$ \zeta^*_{f}(z)  
= {\ZP(mz)}{\ZP(z)} \left( \frac{\ZP((mz)^s)}{\ZP(z^s)} \right)^\beta. $$
\end{proposition}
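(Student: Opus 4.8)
The plan is to compute $f_n = \#\{x \in \PP^1(\overline{K}) : x^{m^n} = x\}$ directly and then unwind the definition of the tame zeta function. First I would split the count into the two fixed points $0, \infty$ (each fixed by every iterate, contributing $1$ to $f_n$ for all $n$) and the fixed points on $\Gm$, which are the solutions of $x^{m^n - 1} = 1$ in $\overline{K}^\times$. The number of such solutions is the separable degree of the map $x \mapsto x^{m^n-1}$, i.e. $|m^n - 1|_p \cdot |m^n - 1|$ where $|\cdot|$ is the ordinary archimedean absolute value on the integer $m^n-1$; since $m \geqslant 2$ this is $(m^n - 1)|m^n-1|_p$. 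So $f_n = m^n \cdot |m^n-1|_p - |m^n-1|_p + 2$, which I would record cleanly as
\begin{equation*}
f_n = m^n |m^n-1|_p - |m^n-1|_p + 2.
\end{equation*}

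Next I would evaluate $|m^n - 1|_p$. If $p \mid m$ this is $1$ for all $n$, so $f_n = m^n - 1 + 2 = m^n + 1$, and then $\zeta_f^*(z) = \ZP(mz)\ZP(z)$ by the trivial-point computation (the $\beta = 0$ case makes the last factor disappear). If $p \nmid m$, let $s$ be the multiplicative order of $m$ mod $p$. Then $|m^n - 1|_p = 1$ unless $s \mid n$, and for $n = s\ell$ one has $v_p(m^{s\ell} - 1) = v_p(m^s - 1) + v_p(\ell)$ by the standard lifting-the-exponent lemma (valid for $p > 2$), so $|m^{s\ell} - 1|_p = |m^s - 1|_p \cdot |\ell|_p$. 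I would express this uniformly: $|m^n-1|_p = \big(|m^s-1|_p\big)^{[s \mid n]} \cdot |n/\gcd(n,s^\infty)|_p$... actually more simply, writing $\beta = (|m^s-1|_p - 1)/s$ — wait, $\beta \in \Z[1/p]$ as defined is the exponent that makes $|m^s-1|_p = p^{-s\beta \cdot (\text{something})}$; I would instead just keep track of $v_p$ and reorganize the generating function accordingly.

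Then the computation proper: plug $f_n$ into $\log \zeta_f^*(z) = \sum_{p \nmid n} f_n z^n/n$. The constant term $2$ contributes $2\sum_{p\nmid n} z^n/n = \log \ZP(z)^2$ — no wait, $\sum_{p\nmid n} z^n/n = \log(\ZP(z))$ by definition since for the point $f_n \equiv 1$; so the $+2$ gives $\ZP(z)^2$? That doesn't match; I need $f_n = m^n|m^n-1|_p - |m^n-1|_p + 2$ but the displayed answer has four $\ZP$ factors total counted with the exponent $\beta$, suggesting the $+2$ splits as $-$ (one unit) cancelling part and $0,\infty$ giving the "$\ZP(z)$" and the $m^n$ term giving "$\ZP(mz)$". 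I would carefully match: for $p \nmid n$, $|m^n-1|_p = 1$ when $s \nmid n$ and $= p^{-(v_p(m^s-1) + v_p(n))}$... but $p \nmid n$ forces $v_p(n) = 0$, so actually $|m^n-1|_p = |m^s-1|_p^{[s\mid n]}$ on the tame range. Hence $f_n = m^n \cdot c^{[s\mid n]} - c^{[s\mid n]} + 2$ with $c := |m^s-1|_p$. Writing $c^{[s\mid n]} = 1 + (c-1)[s\mid n]$ and using $\sum_{p\nmid n,\, s\mid n} a^n z^n/n = \log \ZP((a z)^s)^{1/s}$ (substituting $n = s\ell$, $p \nmid \ell$), I would get
\begin{align*}
\log \zeta_f^*(z) &= \sum_{p\nmid n} \frac{(m^n - 1)z^n}{n} + (c-1)\sum_{\substack{p \nmid n \\ s \mid n}} \frac{(m^n-1)z^n}{n} + \sum_{p\nmid n}\frac{z^n}{n}\\
&= \log\frac{\ZP(mz)}{\ZP(z)} + \frac{c-1}{s}\log\frac{\ZP((mz)^s)}{\ZP(z^s)} + \log\ZP(z),
\end{align*}
which exponentiates to the claimed formula once I identify $\beta = (c-1)/s = (|m^s-1|_p - 1)/s$. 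The main obstacle I anticipate is bookkeeping the interplay between the tame constraint $p \nmid n$ and the congruence $s \mid n$ — in particular making sure that restricting to $p \nmid n$ genuinely kills the $v_p(n)$ term in lifting-the-exponent so that the exponent $\beta$ is a single constant rather than varying with $n$ — and double-checking the normalization $\ZP(az^b)$ against the substitution $n \mapsto s\ell$. Everything else is routine manipulation of logarithmic generating functions.
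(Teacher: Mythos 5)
Your approach is the same as the paper's: compute $f_n = 2 + (m^n-1)\,|m^n-1|_p$, note that the tame restriction $p\nmid n$ (together with $\gcd(s,p)=1$) kills the $v_p(n)$ term in lifting-the-exponent so that $|m^n-1|_p = c^{[s\mid n]}$ with $c=|m^s-1|_p$, and reassemble the logarithmic generating function. The only error is an arithmetic slip in your final display: the constant $+2$ in $f_n$ contributes $2\sum_{p\nmid n}z^n/n = \log \ZP(z)^2$, not $\log\ZP(z)$ as you wrote. Your earlier worry that ``$\ZP(z)^2$ doesn't match'' was misplaced: the first sum $\sum_{p\nmid n}(m^n-1)z^n/n$ equals $\log\bigl(\ZP(mz)/\ZP(z)\bigr)$, whose denominator cancels one of the two copies of $\ZP(z)$, leaving exactly the $\ZP(mz)\ZP(z)$ of the statement. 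As written, your last line exponentiates to $\ZP(mz)\bigl(\ZP((mz)^s)/\ZP(z^s)\bigr)^{\beta}$, which is off from the claim by a factor of $\ZP(z)$; restoring the coefficient $2$ fixes this and the rest of the computation is correct.
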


\begin{proof} 
The iterate $f^{ n}$ has as its fixed points $\infty, 0$, and the distinct solutions to $x^{m^n-1}=1$ in $\overline \F_p$. 
Hence $f_n = 2+(m^n-1) \cdot |m^n-1|_p.$ 

If $p{\mid}m$, we have $f_n=m^n+1$, and the result follows. Now assume that $m$ is coprime to $p$. We then have (see e.g.\  \cite[Lemma 6.1]{BridyBordeaux})  \begin{equation} \label{fnex} |m^n-1|_p = \left\{ \begin{array}{ll} 1 & \mbox{ if } s{\nmid}n, \\ |m^s-1|_p \cdot |n|_p & \mbox{ if } s{\mid}n. \end{array} \right. \end{equation} 
Observe that $s$ is a divisor of $p-1$; in particular, $s$ is coprime to $p$ and $\beta \in \Z[1/p]$. If we set $M:=|m^s-1|_p$, the tame zeta function can be computed as follows:
\begin{align*} 
& \log\, \zeta^*_{f}(z)/\ZP(z)^2 =  \sum_{ p \nmid n; s \nmid n} \frac{m^n-1}{n}z^n +  M \sum_{ p \nmid n; s \mid n} \frac{(m^n-1)}{n} z^n  \\ 
& = \sum_{n} \frac{m^n-1}{n}z^n  - \sum_{n} \frac{m^{pn}-1}{pn} z^{pn} \\ & \hspace*{75pt} + (M-1)\left(\sum_{ n}  \frac{m^{sn}-1}{sn} z^{sn} - \sum_ { n}  \frac{m^{psn}-1}{psn} z^{psn}\right)\\ 
& =\log\left( \frac{1-z}{1-mz} \cdot \frac{(1-(mz)^p)^{\frac{1}{p}}}{(1-z^p)^{\frac{1}{p}}}\right) + \frac{(M-1)}{s}\log\left( \frac{1-z^s}{1-(mz)^s} \cdot \frac{(1-(mz)^{ps})^{\frac{1}{p}}}{(1-z^{ps})^{\frac{1}{p}}}\right). \qedhere
\end{align*}
\end{proof}

\noindent Without showing further details of computations (that go along the lines of those for the power map) we now list several other tame zeta functions.  
\begin{proposition} Suppose that $K$ is an algebraically closed field of characteristic $p>2$. 
Let $m\geqslant 2$ be an integer. The normalised Chebyshev polynomial $T_m$ is the unique monic polynomial of degree $m$ with integer coefficients satisfying $T_m(z+z^{-1}) = z^m+z^{-m}$. Consider the \emph{Chebyshev map}  $$T_m \colon \PP^1 \rightarrow \PP^1$$ given by the polynomial $T_m$ \textup{(}denoted by the same symbol\textup{)}. 

Let $E/K$ denote an elliptic curve and let $\pi \colon E \rightarrow \PP^1$ be a covering map of order two. The \textup{(}standard\textup{)} \emph{Latt\`es map} $$L_m \colon \PP^1 \rightarrow \PP^1$$  corresponding to $\pi$ is defined by the property $L_m \circ \pi = \pi \circ [m]$, where $[m]$ is the multiplication-by-$m$ map on $E$. 

If $p{\nmid}m$, let $s$ denote the multiplicative order of $m$ modulo $p$. Let $h=1$ if $f$ is a Chebyshev map or a Latt\`es map arising from an ordinary elliptic curve, and let $h=2$ otherwise. Set  $\beta:=(|m^s-1|^h_p-1)/s \in \Z[1/p]$. 
Then the corresponding tame zeta functions \textup{(}quotiented by a convenient factor\textup{)} are given in Table \ref{tab:da2}.

\begin{table}[h] 
\centering
\def\arraystretch{1.5}
\begin{tabular}{| L | L | L |}
\hline 
\mbox{\textup{Condition}} &  \zeta^{*}_{T_m}(z)/ (\ZP(mz) \ZP(z))  & \zeta^{*}_{L_m}(z)/(\ZP(m^2z) \ZP(z))\\[3pt]
\hline 
p {\mid} m & 1 & 1  \\[5pt]
p {\nmid}m \mbox{ \textup{and} } 2 {\nmid} s & \left(\frac{\ZP((mz)^s)}{\ZP(z^s)}\right)^{\beta/2} & \left( \frac{\ZP((m^2z)^s) \ZP(z^s)}{\ZP((mz)^s)^2} \right)^{\beta/2}  \\[20pt]
p {\nmid}m \mbox{ \textup{and} } s=2t & \left(\frac{\ZP((mz)^t)\ZP(z^t)}{\ZP(z^{2t})}\right)^{\beta} &
\left( \frac{\ZP((m^2z)^t) \ZP(z^t) \ZP((mz)^t)^2}{\ZP((mz)^{2t})^2} \right)^\beta \\[15pt]
\hline
\end{tabular}
\caption{Tame zeta functions of some dynamically affine maps on $\PP^1$.}
\label{tab:da2} 
\end{table}
\vspace*{-25pt} \qed
\end{proposition}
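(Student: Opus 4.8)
The plan is to mirror the detailed computation carried out for the power map in Proposition \ref{zm}: use Lemma \ref{lem:key} to reduce to counting kernels of the isogenies $[m^n\pm1]$ on $\Gm$ or on $E$, make the relevant $p$-adic valuations explicit, and then recognise the resulting Lambert-type series as combinations of logarithms of $\ZP$ evaluated at scalings and powers of $z$.

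First I would record the fixed-point counts. By the classification in Table \ref{tab:da}, the Chebyshev map arises from $(G,\Gamma)=(\Gm,\{\pm1\})$ with $\iota\colon\Gamma\backslash\Gm\cong\mathbf{A}^1\hookrightarrow\PP^1$, so the set $C$ of Lemma \ref{lem:key} is the single fixed point $\{\infty\}$ and $(T_m|_C)_n=1$; the Latt\`es map arises from $(G,\Gamma)=(E,\{\pm1\})$ with $\iota$ an isomorphism onto $\PP^1$, so $C=\emptyset$ and $(L_m|_C)_n=0$. Since $\sigma=[m]$, on either $\Gm$ or $E$ we have $\sigma^n-\gamma=[m^n-1]$ for $\gamma=1$ and $\sigma^n-\gamma=[m^n+1]$ for $\gamma=-1$. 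A direct computation of degrees and inseparable degrees gives $\#\ker[k]=k\,|k|_p$ on $\Gm$ and $\#\ker[k]=k^2\,|k|_p^{\,h}$ on $E$, where $h=1$ if $E$ is ordinary and $h=2$ if $E$ is supersingular. Combining this with Lemma \ref{lem:key} and \eqref{eq:kerdegs} yields
\[
(T_m)_n=1+\tfrac12\big((m^n-1)|m^n-1|_p+(m^n+1)|m^n+1|_p\big),
\]
\[
(L_m)_n=\tfrac12\big((m^n-1)^2|m^n-1|_p^{\,h}+(m^n+1)^2|m^n+1|_p^{\,h}\big),
\]
with $h=1$ also in the Chebyshev case; this is the source of the uniform exponent $h$ in the statement.

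Next I would make the valuations explicit. If $p\mid m$, then $|m^n\pm1|_p=1$ for all $n\geqslant1$, so $(T_m)_n=m^n+1$ and $(L_m)_n=m^{2n}+1$, and the first row of Table \ref{tab:da2} follows at once from $\log\ZP(w)=\sum_{p\nmid n}w^n/n$. If $p\nmid m$, then $|m^n-1|_p$ is governed by \eqref{fnex}; the analogue for $m^n+1$ is that $m^n\equiv-1\pmod p$ is possible only when $s$ is even, and then exactly when $n\equiv s/2\pmod s$, in which case lifting the exponent (where $p>2$ enters) gives $v_p(m^n+1)=v_p(m^{s/2}+1)+v_p(n)=v_p(m^s-1)+v_p(n)$, using $v_p(m^{s/2}-1)=0$. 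Writing $M:=|m^s-1|_p$, so that $\beta=(M^h-1)/s$, the upshot is that each of $|m^n-1|_p^{\,h}$ and $|m^n+1|_p^{\,h}$ equals $1$ except on the progression $s\mid n$, resp.\ $n\equiv s/2\pmod s$, where it equals $M^h\,|n|_p^{\,h}$; after restricting to $p\nmid n$ the factor $|n|_p^{\,h}$ disappears.

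Finally I would substitute into $\zeta^*_f(z)=\exp\big(\sum_{p\nmid n}f_nz^n/n\big)$ and split the exponent into a main term, obtained by replacing every $|m^n\pm1|_p$ by $1$, and correction terms. The main term contributes $\tfrac12\big((m^n-1)+(m^n+1)\big)+1=m^n+1$ for $T_m$ and $\tfrac12\big((m^n-1)^2+(m^n+1)^2\big)=m^{2n}+1$ for $L_m$, i.e.\ exactly the ``convenient factors'' $\ZP(mz)\ZP(z)$ and $\ZP(m^2z)\ZP(z)$ by which we divide in Table \ref{tab:da2}. Each correction term is $\tfrac{M^h-1}{2}$ times a Lambert series $\sum c\,a^nz^n/n$ restricted to an arithmetic progression: on the progression $s\mid n$ the substitution $n\mapsto sn'$ and the identity $\sum_{p\nmid n'}a^{n'}/n'=\log\ZP(a)$ turn it into $\tfrac{c}{s}\log\ZP((az)^s)$ for $a\in\{1,m,m^2\}$, and when $s=2t$ the progression $n\equiv t\pmod{2t}$ is handled by $n\mapsto tn'$ with $n'$ odd together with $\sum_{n'\ \mathrm{odd},\,p\nmid n'}a^{n'}/n'=\log\big(\ZP(a)/\sqrt{\ZP(a^2)}\big)$ (again using $p>2$), turning it into $\tfrac{c}{t}\log\big(\ZP((az)^t)/\sqrt{\ZP((az)^{2t})}\big)$. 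Collecting all logarithms, substituting $\beta=(M^h-1)/s$, and simplifying, one lands exactly on the entries of Table \ref{tab:da2}. I expect the main obstacle to be precisely this final bookkeeping in the even-$s$ rows, where the $m^n-1$ and $m^n+1$ corrections feed into the same factors $\ZP((mz)^t)$, $\ZP(z^t)$ and must be combined so that the $\ZP(\cdot^{\,2t})$ contributions partly cancel; pinning down $v_p(m^n+1)$ on its progression is the other point requiring care, while everything else is a routine rerun of the computation in Proposition \ref{zm}.
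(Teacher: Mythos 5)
Your computation is correct and is precisely the argument the paper has in mind: the paper omits all details, saying only that the computations ``go along the lines of those for the power map'' in Proposition \ref{zm}, and your write-up supplies exactly that rerun (fixed-point counts via Lemma \ref{lem:key} with $\#\ker[k]=k\,|k|_p$ on $\Gm$ and $k^2|k|_p^{\,h}$ on $E$, the lifting-the-exponent evaluation of $v_p(m^n+1)$ on the progression $n\equiv s/2\pmod s$, and the recombination into $\ZP$-factors). I checked the bookkeeping in the even-$s$ rows, including the cancellation of the $\ZP(\cdot^{\,2t})$ contributions, and it lands exactly on the entries of Table \ref{tab:da2}.
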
 

\begin{proposition} Suppose that $K$ is an algebraically closed field of characteristic $p>0$ and consider an \emph{additive polynomial} in $K[X]$ of the form $a_0 X + a_1 X^p + \cdots +a_m X^m$ with $m=p^r$ for some integer $r\geqslant 1$. Assume that $a_0 \in \overline{\F}_p^{\tm}$ and $m\geqslant 2$. Consider $f$ as a map $$ f \colon \PP^1 \rightarrow \PP^1, X \mapsto  a_0 X + a_1 X^{p} + \dots + a_m X^{m}.$$ Let $s\geqslant 1$ be the smallest integer with $f^{s}(X) = X + a X^{p^t} + \cdots$ for $a \neq 0$ and $t\in \Z_{>0}$. Put $\beta=(p^{-t}-1)/s$. Then 
\[  \pushQED{\qed} 
\zeta_{f}^*(z) = \ZP(m z) \ZP(z) \ZP((m z)^s)^\beta. \qedhere \popQED \]
\end{proposition}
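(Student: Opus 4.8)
The plan is to compute the fixed-point counts $f_n$ explicitly via Lemma~\ref{lem:key} and then expand the tame zeta function into shifted copies of $\ZP$. An additive polynomial is a homomorphism of $\Ga$, so here the dynamically affine data are $G=\Ga$, $\sigma=f\in\End(\Ga)$, $h=0$, $\Gamma=\{1\}$, and $\iota\colon\Ga\cong\mathbf{A}^1\hookrightarrow\PP^1$ the standard embedding; $\sigma$ is confined because $f^n(X)=X$ has only finitely many solutions. The set $C$ of Lemma~\ref{lem:key} is then $\{\infty\}$, the unique fixed point of $f$ off the affine line, so $(f|_C)_n=1$ for all $n$ and, since $\Gamma$ is trivial, Lemma~\ref{lem:key} gives $f_n=1+\#\ker(\sigma^n-1)$.

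To control $\#\ker(\sigma^n-1)$ I would use \eqref{eq:kerdegs} together with the $\phi$-adic picture of $\End(\Ga)=K\langle\phi\rangle$. Writing $\sigma=\sum_{i=0}^r a_i\phi^i$ with $a_r\neq0$ and $m=p^r$, the top $\phi$-degree term of $\sigma^n$ is a nonzero multiple of $\phi^{rn}$, so $\deg(\sigma^n-1)=p^{rn}=m^n$; with the valuation $v=v_\phi$ associated to the two-sided ideal $(\phi)$, which verifies \ref{h2} for $G=\Ga$ (Lemma~\ref{lem:h2p1} and the remark after it), one has $\deg_{\mathrm{i}}(\sigma^n-1)=p^{v(\sigma^n-1)}$. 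Hence $\#\ker(\sigma^n-1)=m^n\,p^{-v(\sigma^n-1)}$, and the whole problem reduces to determining $v(\sigma^n-1)$.

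Here Proposition~\ref{prop:valprop} enters. The $\phi$-degree-$0$ coefficient of $\sigma^n$ is $a_0^n$, so $\sigma^n-1$ has $\phi$-valuation $0$ precisely when $a_0^n\neq1$; as $a_0\in\overline{\F}_p^\times$ has finite order coprime to $p$, this order is exactly the integer $s$ of the statement, since then $f^s(X)=X+aX^{p^t}+\cdots$ with $t=v(\sigma^s-1)\geqslant1$ and $a\neq0$ (because $\sigma^s\neq1$, its top $\phi$-degree being $rs>0$), and minimality forces the match. Thus $v(\sigma^n-1)=0$ for $s\nmid n$, whereas for $s\mid n$, writing $n=sn'$ and noting $v(\sigma^s)=0$, $v(\sigma^s-1)=t>0$, and that $\sigma^s$ commutes with $1$, Proposition~\ref{prop:valprop}\ref{comm}\ref{pp} (applicable since $K\langle\phi\rangle$ has characteristic $p$) gives $v(\sigma^n-1)=t\,|n|_p^{-1}$.

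It then remains to expand. From $\sum_{p\nmid n}z^n/n=\log\ZP(z)$ and $f_n=1+\#\ker(\sigma^n-1)$ we get $\log\zeta_f^*(z)=\log\ZP(z)+\sum_{p\nmid n}\#\ker(\sigma^n-1)\,z^n/n$. For $p\nmid n$ one has $|n|_p=1$, so $\#\ker(\sigma^n-1)$ is $m^n$ when $s\nmid n$ and $m^np^{-t}$ when $s\mid n$; splitting the sum, using $\sum_{p\nmid n}(mz)^n/n=\log\ZP(mz)$ and reindexing $n=sn'$ in $\sum_{p\nmid n,\,s\mid n}(mz)^n/n=\frac{1}{s}\log\ZP((mz)^s)$ (legitimate as $\gcd(s,p)=1$), one obtains $\log\zeta_f^*(z)=\log\ZP(z)+\log\ZP(mz)+\beta\log\ZP((mz)^s)$ with $\beta=(p^{-t}-1)/s$, which is the asserted formula. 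The step I expect to require the most care is the bookkeeping in this last paragraph and the matching of the parameters $s,t$ with the order of $a_0$ and with $v_\phi(\sigma^s-1)$, including the verification of the hypotheses of Proposition~\ref{prop:valprop}\ref{comm}\ref{pp}; the degree computation and the power-series algebra are routine.
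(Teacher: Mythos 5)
Your proposal is correct and follows essentially the route the paper intends: the paper omits the details, saying only that the computation "goes along the lines of those for the power map" (Proposition \ref{zm}), and your argument is exactly that analogue — identify $s$ as the order of $a_0$ and $t$ as $v_\phi(\sigma^s-1)$, use Proposition \ref{prop:valprop}\ref{comm}\ref{pp} to get $v(\sigma^n-1)=t\,|n|_p^{-1}$ for $s\mid n$ and $0$ otherwise, and then split the sum over $p\nmid n$ into the three $\ZP$ factors. All the individual steps (confinedness, $C=\{\infty\}$, $\deg(\sigma^n-1)=m^n$, coprimality of $s$ and $p$ justifying the reindexing) check out.
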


\noindent In characteristic two and for more general (sub)additive polynomials, similar methods apply, but the computations are more tedious and we have not listed the outcome. We have not carried out an explicit computation for general Latt\`es maps arising from endomorphisms of elliptic curves that are not given by multiplication by an integer or corresponding to  larger (possibly noncommutative) automorphism groups $\Gamma$.

\bibliographystyle{amsplain}
\providecommand{\bysame}{\leavevmode\hbox to3em{\hrulefill}\thinspace}
\providecommand{\MR}{\relax\ifhmode\unskip\space\fi MR }
\providecommand{\MRhref}[2]{%
  \href{http://www.ams.org/mathscinet-getitem?mr=#1}{#2}
}
\providecommand{\href}[2]{#2}

\end{document}